\documentclass[twoside,11pt]{preprintCL}

\usepackage[T1]{fontenc}
\usepackage[english]{babel}
\usepackage{times}
\usepackage{amsmath}
\usepackage{amsfonts}
\usepackage{amssymb}
\usepackage{amsmath}
\usepackage{amsthm}
\usepackage{graphicx}
\usepackage{array}
\usepackage{color}
\usepackage{mathrsfs}
\usepackage{hyperref}
\usepackage{esint} 
\usepackage{tikz}
\usepackage{ upgreek }
\usepackage{enumitem}

\pagestyle{plain}
\setlength{\oddsidemargin}{1mm}
\setlength{\evensidemargin}{1mm}
\setlength{\textwidth}{160mm}
\setlength{\headheight}{0mm}
\setlength{\headsep}{12mm}
\setlength{\topmargin}{-10mm}
\setlength{\textheight}{230mm}
\setcounter{secnumdepth}{2}




\definecolor{darkgreen}{rgb}{0.1,0.7,0.1}



\newtheorem{theorem}{Theorem}
\newtheorem{lemma}{Lemma}[section]
\newtheorem{proposition}[lemma]{Proposition}

\newtheorem{remark}[lemma]{Remark}
\newtheorem{definition}[lemma]{Definition}

%
%

\makeatletter
\pgfdeclareshape{crosscircle}
{
  \inheritsavedanchors[from=circle] 
  \inheritanchorborder[from=circle]
  \inheritanchor[from=circle]{north}
  \inheritanchor[from=circle]{north west}
  \inheritanchor[from=circle]{north east}
  \inheritanchor[from=circle]{center}
  \inheritanchor[from=circle]{west}
  \inheritanchor[from=circle]{east}
  \inheritanchor[from=circle]{mid}
  \inheritanchor[from=circle]{mid west}
  \inheritanchor[from=circle]{mid east}
  \inheritanchor[from=circle]{base}
  \inheritanchor[from=circle]{base west}
  \inheritanchor[from=circle]{base east}
  \inheritanchor[from=circle]{south}
  \inheritanchor[from=circle]{south west}
  \inheritanchor[from=circle]{south east}
  \inheritbackgroundpath[from=circle]
  \foregroundpath{
    \centerpoint%
    \pgf@xc=\pgf@x%
    \pgf@yc=\pgf@y%
    \pgfutil@tempdima=\radius%
    \pgfmathsetlength{\pgf@xb}{\pgfkeysvalueof{/pgf/outer xsep}}%
    \pgfmathsetlength{\pgf@yb}{\pgfkeysvalueof{/pgf/outer ysep}}%
    \ifdim\pgf@xb<\pgf@yb%
      \advance\pgfutil@tempdima by-\pgf@yb%
    \else%
      \advance\pgfutil@tempdima by-\pgf@xb%
    \fi%
    \pgfpathmoveto{\pgfpointadd{\pgfqpoint{\pgf@xc}{\pgf@yc}}{\pgfqpoint{-0.707107\pgfutil@tempdima}{-0.707107\pgfutil@tempdima}}}
    \pgfpathlineto{\pgfpointadd{\pgfqpoint{\pgf@xc}{\pgf@yc}}{\pgfqpoint{0.707107\pgfutil@tempdima}{0.707107\pgfutil@tempdima}}}
    \pgfpathmoveto{\pgfpointadd{\pgfqpoint{\pgf@xc}{\pgf@yc}}{\pgfqpoint{-0.707107\pgfutil@tempdima}{0.707107\pgfutil@tempdima}}}
    \pgfpathlineto{\pgfpointadd{\pgfqpoint{\pgf@xc}{\pgf@yc}}{\pgfqpoint{0.707107\pgfutil@tempdima}{-0.707107\pgfutil@tempdima}}}
  }
}
\makeatother

\colorlet{symbols}{blue!90!black}
\colorlet{testcolor}{green!60!black}


\def\symbol#1{\textcolor{symbols}{#1}}
\def\1{\mathbf{\symbol{1}}}

\usetikzlibrary{shapes.misc}
\usetikzlibrary{shapes.symbols}
\usetikzlibrary{snakes}
\usetikzlibrary{decorations}
\usetikzlibrary{decorations.markings}

\def\drawx{\draw[-,solid] (-3pt,-3pt) -- (3pt,3pt);\draw[-,solid] (-3pt,3pt) -- (3pt,-3pt);}
\tikzset{
	root/.style={circle,fill=testcolor,inner sep=0pt, minimum size=2mm},
	dot/.style={circle,fill=black,inner sep=0pt, minimum size=1mm},
	var/.style={circle,fill=black!10,draw=black,inner sep=0pt, minimum size=2mm},
	varH/.style={rectangle,fill=black!10,draw=black,inner sep=0pt, minimum size=2mm},
	dotred/.style={circle,fill=black!50,inner sep=0pt, minimum size=2mm},
	generic/.style={semithick,shorten >=1pt,shorten <=1pt},
	dist/.style={ultra thick,draw=testcolor,shorten >=1pt,shorten <=1pt},
	testfcn/.style={ultra thick,testcolor,shorten >=1pt,shorten <=1pt,<-},
	testfcnx/.style={ultra thick,testcolor,shorten >=1pt,shorten <=1pt,<-,
		postaction={decorate,decoration={markings,mark=at position 0.6 with {\drawx}}}},
	kprime/.style={semithick,shorten >=1pt,shorten <=1pt,densely dashed,->},
	kprimex/.style={semithick,shorten >=1pt,shorten <=1pt,densely dashed,->,
		postaction={decorate,decoration={markings,mark=at position 0.4 with {\drawx}}}},
	kernel/.style={semithick,shorten >=1pt,shorten <=1pt,->},
	multx/.style={shorten >=1pt,shorten <=1pt,
		postaction={decorate,decoration={markings,mark=at position 0.5 with {\drawx}}}},
	kernelx/.style={semithick,shorten >=1pt,shorten <=1pt,->,
		postaction={decorate,decoration={markings,mark=at position 0.4 with {\drawx}}}},
	kernel1/.style={->,semithick,shorten >=1pt,shorten <=1pt,postaction={decorate,decoration={markings,mark=at position 0.45 with {\draw[-] (0,-0.1) -- (0,0.1);}}}},
	kernel2/.style={->,semithick,shorten >=1pt,shorten <=1pt,postaction={decorate,decoration={markings,mark=at position 0.45 with {\draw[-] (0.05,-0.1) -- (0.05,0.1);\draw[-] (-0.05,-0.1) -- (-0.05,0.1);}}}},
	kernelBig/.style={semithick,shorten >=1pt,shorten <=1pt,decorate, decoration={zigzag,amplitude=1.5pt,segment length = 3pt,pre length=2pt,post length=2pt}},
	rho/.style={dotted,semithick,shorten >=1pt,shorten <=1pt},
	renorm/.style={shape=circle,fill=white,inner sep=1pt},
	labl/.style={shape=rectangle,fill=white,inner sep=1pt},
	xi/.style={circle,fill=symbols!10,draw=symbols,inner sep=0pt,minimum size=1.2mm},
	xix/.style={crosscircle,fill=symbols!10,draw=symbols,inner sep=0pt,minimum size=1.2mm},
	xib/.style={circle,fill=symbols!10,draw=symbols,inner sep=0pt,minimum size=1.6mm},
	xibx/.style={crosscircle,fill=symbols!10,draw=symbols,inner sep=0pt,minimum size=1.6mm},
	not/.style={circle,fill=symbols,draw=symbols,inner sep=0pt,minimum size=0.5mm},
	>=stealth,
	}
\makeatletter
\def\DeclareSymbol#1#2#3{\expandafter\gdef\csname MH@symb@#1\endcsname{\tikz[baseline=#2,scale=0.15,draw=symbols]{#3}}\expandafter\gdef\csname MH@symb@#1s\endcsname{\scalebox{0.7}{\tikz[baseline=#2,scale=0.15,draw=symbols]{#3}}}}
\def\<#1>{\csname MH@symb@#1\endcsname}
\makeatother

\DeclareSymbol{Xi22}{0.5}{\draw (0,0) node[xi] {} -- (-1,1) node[not] {} -- (0,2) node[xi] {};}

\DeclareSymbol{Xi2}{-2}{\draw (0,-0.25) node[xi] {} -- (-1,1) node[xi] {};}
\DeclareSymbol{Xi3}{0}{\draw (0,0) node[xi] {} -- (-1,1) node[xi] {} -- (0,2) node[xi] {};}
\DeclareSymbol{Xi4}{2}{\draw (0,0) node[xi] {} -- (-1,1) node[xi] {} -- (0,2) node[xi] {} -- (-1,3) node[xi] {};}
\DeclareSymbol{Xi2X}{-2}{\draw (0,-0.25) node[xi] {} -- (-1,1) node[xix] {};}
\DeclareSymbol{XXi2}{-2}{\draw (0,-0.25) node[xix] {} -- (-1,1) node[xi] {};}

\DeclareSymbol{IXi2}{0}{\draw (0,-0.25) node[not] {} -- (-1,1) node[xi] {} -- (0,2) node[xi] {};}
\DeclareSymbol{IXi^2}{-1}{\draw (-1,1) node[xi] {} -- (0,0) node[not] {} -- (1,1) node[xi] {};}

\DeclareSymbol{XiX}{-2.8}{\node[xibx] {};}
\DeclareSymbol{Xi}{-2.8}{\node[xib] {};}
\DeclareSymbol{IXiX}{-1}{\draw (0,-0.25) node[not] {} -- (-1,1) node[xix] {};}

\DeclareSymbol{Xi3b}{-1}{\draw (-1,1) node[xi] {} -- (0,0) node[xi] {} -- (1,1) node[xi] {};}

\DeclareSymbol{IXi3}{2}{\draw (0,-0.25) node[not] {} -- (-1,1) node[xi] {} -- (0,2) node[xi] {} -- (-1,3) node[xi] {};}
\DeclareSymbol{IXi}{-2}{\draw (0,-0.25) node[not] {} -- (-1,1) node[xi] {};}
\DeclareSymbol{XiI}{-2}{\draw (0,-0.25) node[xi] {} -- (-1,1) node[not] {};}

\DeclareSymbol{Xi4b}{-1}{\draw(0,1.5) node[xi] {} -- (0,0); \draw (-1,1) node[xi] {} -- (0,0) node[xi] {} -- (1,1) node[xi] {};}
\DeclareSymbol{Xi4b'}{-1}{\draw(0,1.5) node[xi] {} -- (0,-0.2); \draw (-1,1) node[xi] {} -- (0,-0.2) node[not] {} -- (1,1) node[xi] {};}
\DeclareSymbol{Xi4c}{0}{\draw (0,1) -- (0.8,2.2) node[xi] {};\draw (0,-0.25) node[xi] {} -- (0,1) node[xi] {} -- (-0.8,2.2) node[xi] {};}
\DeclareSymbol{Xi4d}{-4.5}{\draw (0,-1.5) node[not] {} -- (0,0); \draw (-1,1) node[xi] {} -- (0,0) node[xi] {} -- (1,1) node[xi] {};}
\DeclareSymbol{Xi4e}{0}{\draw (0,2) node[xi] {} -- (-1,1) node[xi] {} -- (0,0) node[xi] {} -- (1,1) node[xi] {};}
\DeclareSymbol{Xi4e'}{0}{\draw (0,2) node[xi] {} -- (-1,1) node[xi] {} -- (0,-0.2) node[not] {} -- (1,1) node[xi] {};}


\newcommand{\bn}[1]{{[\kern-0.5ex] #1 
    [\kern-0.5ex]}}

\newcommand\symb[2][\bf]{{\mathchoice{\hbox{#1#2}}{\hbox{#1#2}}%
        {\hbox{\scriptsize#1#2}}{\hbox{\tiny#1#2}}}}
        
\def\Bess{\mbox{Bess}}

\def\Alg{\mbox{Alg}}

\def\dist{\mbox{dist}}

\def\R{{\symb R}}
\def\N{{\symb N}}
\def\Z{{\symb Z}}

\def\P{{\symb P}}

\def\Sp{\mbox{Sp}}

\def\un{\mathbf{1}}

\def\cKa{\cK^{(a)}}

\def\${|\!|\!|}

\def\Latt{|\!|\!|_{\Lambda}}

\renewcommand{\P}{\mathbb{P}}
\newcommand{\E}{\mathbb{E}}

\newcommand{\bbE}{\mathbb{E}}

\newcommand{\bbV}{\mathbb{V}}

\newcommand{\cA}{\mathcal{A}}
\newcommand{\cB}{\mathcal{B}}
\newcommand{\cC}{\mathcal{C}}
\newcommand{\cD}{\mathcal{D}}
\newcommand{\cE}{\mathcal{E}}
\newcommand{\cF}{\mathcal{F}}
\newcommand{\cG}{\mathcal{G}}
\newcommand{\cH}{\mathcal{H}}
\newcommand{\cI}{\mathcal{I}}
\newcommand{\cJ}{\mathcal{J}}
\newcommand{\cK}{\mathcal{K}}

\newcommand{\cM}{\mathcal{M}}

\newcommand{\cO}{\mathcal{O}}

\newcommand{\cQ}{\mathcal{Q}}
\newcommand{\cR}{\mathcal{R}}
\newcommand{\cS}{\mathcal{S}}
\newcommand{\cT}{\mathcal{T}}
\newcommand{\cU}{\mathcal{U}}

\newcommand{\cW}{\mathcal{W}}

\newcommand{\ccA}{\mathscr{A}}
\newcommand{\ccB}{\mathscr{B}}

\newcommand{\ccD}{\mathscr{D}}

          
\hypersetup{
citecolor=blue,
colorlinks=true, 
breaklinks=true, 
urlcolor= blue, 
linkcolor= black, 
bookmarksopen=true, 
pdftitle={Anderson Hamiltonian}, 
}

\begin{document}

\title{ {The continuous Anderson hamiltonian in $d\le 3$}}

\author{Cyril Labb\'e}
\institute{
Universit\'e Paris-Dauphine\footnote{PSL University, CNRS, UMR 7534, CEREMADE, 75016 Paris, France. \email{labbe@ceremade.dauphine.fr}}}

\vspace{2mm}

\date{\today}

\maketitle

\begin{abstract}
We construct the continuous Anderson hamiltonian on $(-L,L)^d$ driven by a white noise and endowed with either Dirichlet or periodic boundary conditions. Our construction holds in any dimension $d\le 3$ and relies on the theory of regularity structures~\cite{Hairer2014}: it yields a self-adjoint operator in $L^2\big((-L,L)^d\big)$ with pure point spectrum. In $d\ge 2$, a renormalisation of the operator by means of infinite constants is required to compensate for ill-defined products involving functionals of the white noise. We also obtain left tail estimates on the distributions of the eigenvalues: in particular, for $d=3$ these estimates show that the eigenvalues do not have exponential moments.

\medskip

\noindent
{\bf AMS 2010 subject classifications}: Primary 35J10, 60H15; Secondary 47A10. \\
\noindent
{\bf Keywords}: {\it Anderson hamiltonian; regularity structures; white noise; Schr\"odinger operator.}
\end{abstract}

\setcounter{tocdepth}{2}
\tableofcontents

\section{Introduction}

We are interested in the so-called Anderson hamiltonian in dimension $d\in \{1,2,3\}$ formally defined as
\begin{equation}\label{Eq:Hamiltonian}
\cH = -\Delta + \xi\;,\quad x\in (-L,L)^d\;,
\end{equation}
where $L>0$, $\xi$ is a white noise on $(-L,L)^d$ and $\Delta$ is the continuous Laplacian. The boundary conditions will be taken to be either Dirichlet or periodic.

\medskip

The operator $\cH$ belongs to the class of random Schr\"odinger operators that model, for instance, the evolution of a particle in a random potential $\xi$. An important question about such operators is whether there exist parts of the spectrum where the eigenfunctions are localised in space. This question was originally addressed in the discrete setting where the underlying space is $\Z^d$, starting with the work of Anderson~\cite{Anderson58} who showed that in dimension $3$ the lower part of the spectrum gives rise to localised eigenfunctions. This and many other related questions have given rise to a vast literature in physics and mathematics.

\medskip

The operator $\cH$ that we consider here arises - at least formally - as the continuum limit of discrete Anderson hamiltonians whose random potentials are given by i.i.d.~r.v.~with finite variance. As white noise is a random Schwartz distribution with H\"older regularity index slightly below $-d/2$, we are in a situation where the potential is singular and the construction of $\cH$ as a self-adjoint operator on $L^2((-L,L)^d)$ is far from being a simple task. While the case of dimension $1$ was covered by ``standard'' functional analysis arguments in a work of Fukushima and Nakao~\cite{Fukushima}, a renormalisation of the operator by means of infinite constants is required in higher dimensions. The case of dimension $2$ under periodic boundary conditions was carried out only recently by Allez and Chouk~\cite{AllezChouk} and relied on novel techniques coming from the field of stochastic analysis. One of the main achievements of the present paper is the construction of the operator in dimension $3$ under periodic and Dirichlet b.c., and in dimension $2$ under Dirichlet b.c. Note however that for the sake of completeness we present a construction that works in all dimensions $d\in\{1,2,3\}$ and for the two aforementioned types of boundary conditions.

\medskip

Localisation of the eigenfunctions at the bottom of the spectrum of $\cH$ in dimension $1$ and under Dirichlet b.c.~was addressed in~\cite{DL17}. It is shown therein that when the size $L$ of the segment goes to infinity, the smallest eigenvalues, rescaled as $(\log L)^{1/3}(\lambda_n +(\log L)^{2/3})$, converge to a Poisson point process of intensity $e^{x}dx$ and the corresponding eigenfunctions are localised: the localisation centers are asymptotically i.i.d.~uniform over $(-L,L)$, and each eigenfunction, considered on a space scale of order $(\log L)^{-1/3}$ around its localisation center, converges to the inverse of a hyperbolic cosine. Localisation of the eigenfunctions in higher parts of the spectrum is under investigation.

\medskip

A similar localisation phenomenon is expected at the bottom of the spectrum of $\cH$ in dimensions $2$ and $3$: we intend to address this question in future works, based on the present construction. Let us mention a work in progress by Chouk and van Zuijlen~\cite{CvZ} that determines the speed at which the lowest eigenvalue of $\cH$ in dimension $2$ goes to $-\infty$ when $L\to\infty$.\\

Let us now present briefly the reasons why the construction of $\cH$ is non-trivial. First of all, let us observe that while $\cH f$ is well-defined whenever $f$ is a smooth (say $\cC^2$) function, it never belongs to $L^2$. Indeed, the product $\xi\cdot f$ is not a function but only a distribution so that for $\cH f$ to belong to $L^2$ one needs a subtle cancellation to happen between $-\Delta f$ and $\xi\cdot f$, and this requires $-\Delta f$ itself not to be a function. Consequently the domain of $\cH$ does not contain any smooth functions, and therefore, the operator cannot be defined as the closure of its action on smooth functions.

In dimension $1$ Fukushima and Nakao~\cite{Fukushima} constructed the operator $\cH$ under Dirichlet b.c. Let us recall the main steps of their construction (note that it can be adapted to cover other types of boundary conditions). First one proves that the bilinear form
$$ \cE(f,g) = \int \nabla f \nabla g + \int \xi fg\;,$$
is closed in $H^1_0$. Then, a classical representation theorem allows to construct the resolvents. Due to the compactness of the injection of $H^1_0$ into $L^2$, the resolvents are compact, self-adjoint operators on $L^2$ so that they are associated with a self-adjoint operator $\cH$ with pure point spectrum. Note that the construction applies to any potential $\xi$ which is the distributional derivative of an almost surely bounded function. Let us also point out that the construction does not yield much information on the domain of $\cH$. However, one can guess that any element $f$ in the domain of $\cH$ should locally behave like $(-\Delta)^{-1} \xi$ so that the domain is made of random H\"older $3/2^-$ functions.

\medskip

In dimension $2$ and above, the term $\int \xi fg$ is no longer well-defined for $f,g \in H^1_0$, and it is possible to check that the bilinear form $\cE$ is not closable. In fact, the domain of the form itself is random: one needs to consider the sum $\nabla f \nabla g +\xi fg$ as a whole and hope for a cancellation to happen for its integral to make sense.\\
Actually, an additional difficulty appears. Since any element in the domain of $\cH$ should behave locally like $(-\Delta)^{-1} \xi$, the product $\xi \cdot (-\Delta)^{-1} \xi$ arises when applying the operator $\cH$ to any element in its domain: while this distribution is well-defined in dimension $1$ by Young's integration (recall that $(-\Delta)^{-1}$ improves regularity index by $2$), it falls out of the scope of deterministic integration theories as soon as $d\ge 2$. This term needs to be \textit{renormalised} by subtracting some infinite constant. More precisely, if one considers some regularised noise $\xi_\epsilon$, then it is possible to identify some diverging (in $\epsilon$) constant $c_\epsilon$ such that $\xi_\epsilon \cdot (-\Delta)^{-1} \xi_\epsilon - c_\epsilon$ converges in probability to a well-defined object as $\epsilon\downarrow 0$. Note that in dimension $3$, there are other ill-defined products that need to be renormalised.

\medskip

This suggests the following procedure. Given a regularised potential $\xi_\epsilon$, the corresponding operator $-\Delta + \xi_\epsilon$ is well-defined and its domain is $H^2$ (up to the choice of boundary conditions). From the above discussion, this sequence of operators does not converge as $\epsilon\downarrow 0$. Instead, one considers a \textit{renormalised} operator obtained by setting
$$ \cH_\epsilon := -\Delta + \xi_\epsilon + C_\epsilon\;.$$
for some appropriately chosen $C_\epsilon$. One then expects $\cH_\epsilon$ to converge, in some sense, to a limit that we call $\cH$. 

\medskip

Such a result was proven by Allez and Chouk~\cite{AllezChouk} in dimension $2$ and under periodic boundary conditions. To give a meaning to the limiting operator, they adopted the theory of paracontrolled distributions of Gubinelli, Imkeller and Perkowski~\cite{Max}. Let us point out that the theory of paracontrolled distributions and the theory of regularity structures, introduced by Hairer~\cite{Hairer2014}, are two independent approaches for solving singular stochastic PDEs such as the parabolic Anderson model or the stochastic quantization equation. Other related theories have been proposed since then, see~\cite{BBF,OW}. In the present paper, we rely on the theory of regularity structures to construct the limiting operator $\cH$. Let us observe that in dimension $d\ge 4$, none of these theories apply anymore.\\
Near the completion of the present article, we were informed of a very recent work~\cite{Max2}, based on paracontrolled distributions, that constructs the operator in dimensions $2$ and $3$ under periodic boundary conditions, obtains several interesting functional inequalities and solves semi-linear PDEs involving this hamiltonian.

\medskip

Our main result is the following. Let $\rho$ be an even, smooth function integrating to $1$ and supported in the unit ball of $\R^d$. Set $\rho_\epsilon(\cdot) := \epsilon^{-d}\rho(\cdot / \epsilon)$ for any $\epsilon > 0$, and consider the noise $\xi_\epsilon$ obtained by convolving white noise $\xi$ with $\rho_\epsilon$.

\begin{theorem}\label{Th:Main}
In any dimension $d\in\{1,2,3\}$ and under periodic or Dirichlet b.c., there exists a self-adjoint operator $\cH$ on $L^2((-L,L)^d)$ with pure point spectrum such that the following holds. For some suitably chosen sequence of constants $C_\epsilon$, as $\epsilon \downarrow 0$ the eigenvalues/eigenfunctions $(\lambda_{\epsilon,n},\varphi_{\epsilon,n})_{n\ge 1}$ of $\cH_\epsilon$ converge in probability to the eigenvalues/eigenfunctions $(\lambda_{n},\varphi_{n})_{n\ge 1}$ of $\cH$.
\end{theorem}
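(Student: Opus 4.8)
The plan is to build the limiting operator $\cH$ through its resolvent, using the theory of regularity structures to make sense of the ill-defined products, and then to transfer convergence of resolvents to convergence of eigenvalues and eigenfunctions by standard spectral perturbation theory. Concretely, I would proceed as follows. First, for $\lambda$ in a fixed half-line $(-\infty,-\kappa)$ with $\kappa$ large, I would study the resolvent equation $(\cH_\epsilon - \lambda) u = f$, i.e.\ $(-\Delta - \lambda) u + \xi_\epsilon u + C_\epsilon u = f$, and lift it to a fixed-point problem in a space of modelled distributions over a regularity structure adapted to the operator $-\Delta+\xi$. The key structural observation, already signalled in the introduction, is that any $u$ in the would-be domain behaves locally like $(-\Delta)^{-1}\xi$, so one expands $u = u^{\scriptscriptstyle(0)} + u^{\scriptscriptstyle(1)} \cdot (-\Delta)^{-1}\xi + \dots$ (a paracontrolled-type ansatz, realised here through an abstract model). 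Multiplying by $\xi_\epsilon$ produces the products $\xi_\epsilon\cdot(-\Delta)^{-1}\xi_\epsilon$ (and, in $d=3$, further terms), which are renormalised by the constants $C_\epsilon$; convergence of the renormalised model is the probabilistic input, proved by the usual second-moment / hypercontractivity computations on Wiener chaoses, exactly as for the parabolic Anderson model but with the parabolic scaling replaced by the elliptic one and with the domain being the box $(-L,L)^d$ rather than the torus.

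The second step is to incorporate the boundary conditions. For periodic b.c.\ one works on the torus and the model construction is essentially the standard one. For Dirichlet b.c.\ the cleanest route is to build the model on a slightly enlarged domain (or to reflect the noise), note that the singular behaviour of $(-\Delta)^{-1}\xi$ and of the renormalised products is purely local, hence insensitive to the boundary, and then solve the fixed-point problem in modelled-distribution spaces incorporating the homogeneous Dirichlet condition — the boundary only affects the ``smooth remainder'' part of the expansion, which lives in a function space of positive regularity and causes no analytic difficulty. The outcome of steps one and two is: for $\lambda < -\kappa$, the operators $(\cH_\epsilon-\lambda)^{-1}$ converge in probability, in operator norm on $L^2((-L,L)^d)$, to a limiting bounded operator $R_\lambda$; moreover the reconstructed solution $u$ lies in a fixed Hölder-Besov space of regularity $3/2^-$, which embeds compactly into $L^2$, so each $R_\lambda$ is compact.

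The third step is purely functional-analytic. From the resolvent identity one checks that $\{R_\lambda\}_{\lambda<-\kappa}$ is a pseudo-resolvent family, each member compact, self-adjoint (self-adjointness passes to the limit since it holds for every $\epsilon$ and convergence is in operator norm), and injective (again inherited from the approximations, or checked directly), hence by the standard theory there is a unique self-adjoint operator $\cH$, bounded below, with compact resolvent — so with pure point spectrum — such that $R_\lambda = (\cH-\lambda)^{-1}$. Norm-resolvent convergence $(\cH_\epsilon-\lambda)^{-1}\to(\cH-\lambda)^{-1}$ then immediately gives, by the min-max principle, $\lambda_{\epsilon,n}\to\lambda_n$ for every fixed $n$; and, on the event that $\lambda_n$ is simple (or via spectral projections $\frac{1}{2\pi i}\oint (\cH_\epsilon-z)^{-1}dz$ onto a group of eigenvalues, which converge in norm), the eigenfunctions $\varphi_{\epsilon,n}$ converge in $L^2$ to $\varphi_n$, after the usual normalisation/sign fix. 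Since the deterministic implications are along subsequences a.s.\ and the model converges in probability, the conclusion holds in probability.

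The main obstacle I expect is step one, and within it two points in particular: (i) setting up a regularity structure and fixed-point argument that is genuinely \emph{elliptic} (time-independent) rather than parabolic — Schauder-type estimates for $(-\Delta-\lambda)^{-1}$ in the relevant modelled-distribution spaces have to be established, and one must exploit the largeness of $-\lambda$ to get a contraction — and (ii) controlling everything \emph{uniformly up to the boundary} under Dirichlet conditions, which is not covered by the torus-based literature and requires either a reflection trick or a localisation argument separating the universal local singularity from the boundary-dependent regular part. The renormalisation in $d=3$, where additional sub-divergent products appear, is the other delicate input, but it is by now a fairly mechanical (if lengthy) Wiener-chaos computation.
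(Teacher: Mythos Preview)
Your proposal is correct in outline and follows essentially the same strategy as the paper: construct the resolvent $G^a=(\cH+a)^{-1}$ via a fixed-point argument in modelled distributions, show compactness and self-adjointness, define $\cH=(G^a)^{-1}-a$, and pass from operator-norm convergence of resolvents to convergence of spectra. You also correctly flag the two main difficulties (elliptic Schauder estimates with contractivity coming from large $a$, and the Dirichlet boundary).

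Two technical points you underestimate, which are precisely where the paper's work lies. First, since the input $g$ is only in $L^2$, you cannot solve the fixed point in the usual $L^\infty$-type spaces $\cD^\gamma$: the paper works in Besov-type spaces $\cD^{\gamma,\sigma}_{p,\infty}$ of modelled distributions (with $p=2$ in $d\le 2$ and $p$ slightly above $2$ in $d=3$, forced by the reconstruction constraint $\sigma>-1+1/p$). Your remark that the solution has regularity $3/2^-$ is only right in $d=1$; in $d=3$ it is $1/2^-$.

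Second, your claim that for Dirichlet b.c.\ ``the boundary only affects the smooth remainder'' is too optimistic. The paper uses weighted spaces $\cD^{\gamma,\sigma}_{p,\infty}$ with weights $|x|_P^{\sigma-\zeta}$ near $\partial(-L,L)^d$, and in the $L^p$ setting the summability condition on these weights is much more restrictive than in $L^\infty$. The argument only closes because the Dirichlet Green's function \emph{vanishes} on the boundary: this gives an extra factor $|x|_P$ in the convolution estimates (Lemma~\ref{Lemma:CancelDirichlet}) and is what lets you take $\sigma'=(\sigma+2)\wedge 1$ rather than the naive $(\sigma\wedge\alpha)+2$ in the Schauder theorem. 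Without exploiting this cancellation, the fixed point does not close even in $d=2$.
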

%
\begin{remark}\label{Remark:CV}
As it is stated, the convergence of the eigenfunctions is ambiguous since the limiting eigenvalues may have multiplicity larger than $1$ and then the corresponding eigenfunctions are not canonically defined. The convergence should be understood in the following way: if $\Lambda_1 < \Lambda_2 < \ldots$ are the successive, \textit{distinct} eigenvalues of $\cH$ and if $m_i \in \N$ are their multiplicities, then for any $i\ge 1$ the unit ball of the linear span of $\varphi_{\epsilon,1}, \ldots, \varphi_{\epsilon,M_{i}}$ converges for the Hausdorff metric to the unit ball of the linear span of $\varphi_{1}, \ldots, \varphi_{M_{i}}$, where $M_i := \sum_{j=1}^i m_j$.
\end{remark}
\begin{remark}
In dimension $1$, $C_\epsilon$ can be taken equal to $0$. In dimensions $2$ and $3$, $C_\epsilon$ diverges at speed $\log \epsilon$ and $1/\epsilon$ respectively: we refer to Subsection \ref{Sec:Models} for the precise expressions of these quantities. In any cases, these constants do not depend on the given boundary conditions.
\end{remark}

Our second main result is an estimate on the left tail of the distributions of the eigenvalues.

\begin{theorem}\label{Th:Tail}
In the context of the previous theorem, for any $n\ge 1$ there exist two constants $a > b >0$ such that for all $x>0$ large enough we have
\begin{equation}\label{Eq:Asympt}
e^{-a x^{2-d/2}} \le \P(\lambda_n < -x) \le e^{-b x^{2-d/2}}\;.
\end{equation}
\end{theorem}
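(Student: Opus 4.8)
The plan is to establish the two bounds in \eqref{Eq:Asympt} separately, using the variational characterisation of the lowest eigenvalues together with quantitative control on the norm of the white noise in negative Hölder (or Besov) spaces.

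\medskip

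\textbf{Upper bound on $\P(\lambda_n<-x)$.} The starting point is that $\lambda_n$ is bounded below by a deterministic function of the size of the renormalised noise. Concretely, by min-max $\lambda_n \ge \lambda_1$, and $\lambda_1 = \inf_{\|f\|_{L^2}=1} \langle f, \cH f\rangle$ where the quadratic form is the limit of $\int |\nabla f|^2 + \int (\xi_\epsilon + C_\epsilon) f^2$. The key analytic input — which should follow from the construction of $\cH$ via regularity structures carried out earlier in the paper — is a lower bound of the schematic form
\begin{equation*}
\langle f,\cH f\rangle \ge \tfrac12 \|\nabla f\|_{L^2}^2 - \Phi(\$\sXi\$)\,\|f\|_{L^2}^2\;,
\end{equation*}
valid for $f$ in the form domain, where $\$\sXi\$$ denotes the collection of renormalised stochastic objects built from $\xi$ and $\Phi$ is a polynomial (or at worst a fixed power) of their norms in the relevant spaces of negative regularity. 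Hence $\lambda_n \ge -\Phi(\$\sXi\$)$, so $\P(\lambda_n<-x) \le \P\big(\Phi(\$\sXi\$) > x\big)$. Since the stochastic objects live in a fixed inhomogeneous Wiener chaos, their norms have Gaussian-type tails: the chaos of order $k$ has a tail of order $\exp(-c\, t^{2/k})$. In $d=1,2$ the relevant objects are in the first and second chaos, giving $\exp(-c\,x^{1})$ and $\exp(-c\,x^{1})$ after tracking how $\Phi$ depends on them; more carefully the scaling $x^{2-d/2}$ is produced by rescaling: on a box of size $\ell$ the noise scales like $\ell^{-d/2}$ in $L^\infty$-type norms after a further Hölder loss, and optimising the trade-off between the Dirichlet energy $\ell^{-2}$ and the potential term $\ell^{-d/2}$ over $\ell$ yields the exponent $2-d/2$. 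I would make this precise by a localisation argument: cover $(-L,L)^d$ by boxes of a well-chosen mesh, use the chaos tail on each box, and union bound.

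\medskip

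\textbf{Lower bound on $\P(\lambda_n<-x)$.} Here I would exhibit an explicit test function making the quadratic form very negative on an event of not-too-small probability. Fix a small ball $B(z,\ell)\subset(-L,L)^d$ and a smooth bump $f$ supported there with $\|f\|_{L^2}=1$; then $\int|\nabla f|^2 \asymp \ell^{-2}$. The potential contribution $\int \xi f^2 + C_\epsilon$ is, in the limit, a centred Gaussian-like quantity whose standard deviation is of order $\ell^{-d/2}$ (this is exactly $\|f^2\|_{L^2}$ up to the Hölder correction). Asking this term to be more negative than $-(x + \ell^{-2})$ is asking a Gaussian of variance $\asymp \ell^{-d}$ to exceed $x$ in absolute value, which has probability at least $\exp(-C x^2 \ell^{d})$. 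Choosing $\ell \asymp x^{-1/2}$ balances $\ell^{-2}\asymp x$ against the threshold and gives probability $\ge \exp(-C x^{2-d/2})$, and on that event $\lambda_1 \le -cx$, hence $\lambda_1<-x$ after adjusting constants; finally $\lambda_n<-x$ follows by placing $n$ such bumps in disjoint balls (the events are independent by independence of white noise on disjoint sets) so that the form has an $n$-dimensional subspace on which it is $<-x$, whence $\lambda_n<-x$ by min-max. In $d=3$ the renormalisation constant $C_\epsilon\sim 1/\epsilon$ must be handled with care, but since the limiting object $\int\xi f^2$ (suitably interpreted as the pairing of the renormalised noise against $f^2$) is still Gaussian with the stated variance scaling, the computation goes through.

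\medskip

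\textbf{Main obstacle.} The delicate point is the upper bound: I need the coercivity estimate $\langle f,\cH f\rangle \ge \tfrac12\|\nabla f\|^2 - \Phi(\$\sXi\$)\|f\|^2$ with $\Phi$ depending \emph{only} on norms of stochastic data that genuinely sit in a finite Wiener chaos, and with the \emph{sharp} power — getting the exponent $2-d/2$ rather than something weaker requires the scaling/localisation argument to interact correctly with the regularity-structures estimates for the resolvent of $\cH_\epsilon$ and with the Hölder loss $\kappa>0$ below $-d/2$ (this $\kappa$ should only affect constants, not the exponent, but verifying that is the crux). The matching lower bound, by contrast, is essentially a one-line Gaussian computation once the test-function heuristic is set up, so the asymmetry of effort is entirely on the upper-bound side.
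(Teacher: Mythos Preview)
Your proposal captures the right heuristics but has a genuine gap in the lower bound, and the upper bound is too vague to produce the sharp exponent.

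\textbf{Lower bound.} Your argument evaluates the quadratic form on a smooth bump $f$ and treats the potential term as a centred Gaussian. In $d\ge 2$ this cannot work: for smooth $f$ with $\|f\|_{L^2}=1$ one has
\[
\langle f,\cH_\epsilon f\rangle = \|\nabla f\|_{L^2}^2 + \int \xi_\epsilon f^2 + C_\epsilon\;,
\]
and $C_\epsilon\to+\infty$. Hence the form diverges on smooth functions; they are not in the form domain of $\cH$. There is no ``renormalised noise'' whose pairing with $f^2$ is a centred Gaussian: the renormalisation acts on ill-defined products such as $\xi\cdot(-\Delta)^{-1}\xi$, not on $\xi$ itself, so your parenthetical ``suitably interpreted'' hides exactly the difficulty. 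Placing $n$ disjoint smooth bumps therefore does not yield an $n$-dimensional subspace on which the form is $<-x$.

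The paper avoids this via an exact scaling identity: if $\tilde\lambda_n$ denotes the $n$-th eigenvalue of the hamiltonian on $(-L,L)^d$ with the \emph{small} noise $L^{d/2-2}\xi$, then (up to a vanishing correction) $\lambda_n = L^2\tilde\lambda_n$ in law. Taking $x\asymp L^2$, both bounds reduce to estimating $\P(\tilde\lambda_n<-c)$ for a fixed $c$ as $L\to\infty$. For the lower bound one fixes a deterministic smooth potential $h$ (a sum of $n$ disjoint bumps) whose $n$-th eigenvalue is below $-3c$, and applies Cameron--Martin to shift $\tilde\xi$ to $h$: the Radon--Nikodym cost is $\exp\bigl(-\tfrac12 L^{4-d}\|h\|_{L^2}^2\bigr)$, which gives precisely $x^{2-d/2}$. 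The remaining (non-trivial) work is to show that the model built from $\tilde\xi+h$ is close to the deterministic model built from $h$; this requires second-moment bounds on mixed stochastic objects, handled through the generalised convolution estimates of Hairer--Quastel.

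\textbf{Upper bound.} The paper does not establish a form-level coercivity inequality; it never works with the quadratic form in $d\ge 2$. Instead, using the same scaling, the upper bound reduces to $\P(\tilde\lambda_n<-c/2)\le\P\bigl(\sup_m\$\tilde\Pi^{(m)}\$(1+\$\tilde\Gamma^{(m)}\$)>K\bigr)$, controlled via exponential moment bounds $\E\bigl[\exp(\lambda\,\$\Pi^{(a)}\tau\$_\Lambda^{2/\|\tau\|})\bigr]\lesssim L^d$ for each symbol $\tau$. The key point your sketch misses is that $\tilde\Pi\tau$ scales like $L^{(d/2-2)\|\tau\|}\Pi\tau$, and the factor $\|\tau\|$ in this scaling cancels \emph{exactly} against the exponent $2/\|\tau\|$ in the Wiener-chaos tail, so every symbol (including those in the fourth chaos in $d=3$) contributes a tail of order $\exp(-cL^{4-d})$. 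Without this cancellation, a naive $\Phi(\$\sXi\$)$ bound combined with chaos tails would give a weaker exponent from the higher-order terms; your ``localisation and union bound'' remark does not address this, and in any case localisation via test functions produces \emph{upper} bounds on eigenvalues, not the lower bound you need here.
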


In dimension $1$ and under periodic b.c., a more precise result was established by Cambronero, Rider and Ram\'irez~\cite{CamRidRam} on the first eigenvalue. In dimension $2$ and under periodic b.c., this result was established by Allez and Chouk~\cite{AllezChouk} for the first eigenvalue - they also conjectured the present result in dimension $3$. As a corollary of Theorem \ref{Th:Tail}, we deduce that the solution to the parabolic Anderson model constructed in~\cite{Hairer2014,HaiPar}
$$\partial_t u = \Delta u + u\cdot\xi\;,\quad x\in (-L,L)^d\;,$$
has no moments in dimension $3$, and has finite moments in dimension $2$ up to some finite time.

\bigskip

We now present the main steps of the proofs. To prove Theorem \ref{Th:Main}, we construct the resolvent operators through a fixed point problem. Then we show that they are continuous (in some sense) w.r.t.~the driving noise and that they are compact and self-adjoint operators. Once this is established, the result follows from classical arguments.\\
The resolvent $G_a = (\cH + a)^{-1}$ applied to some function $g\in L^2$ should be the fixed point of the map
\begin{equation}\label{Eq:fixedpt} f \mapsto (-\Delta + a)^{-1} g - (-\Delta + a)^{-1} (f \cdot \xi)\;.\end{equation}
To deal with ill-defined products, we lift this fixed point problem into an appropriate regularity structure (the same as the one required to solve the parabolic Anderson model, see~\cite{Hairer2014,mSHE}). Since $g\in L^2$, the natural setting for solving \eqref{Eq:fixedpt} is an $L^2$-type space\footnote{Actually, in dimension $3$ for technical reasons we need to work in $L^p$ with $p$ slightly larger than $2$.}: indeed, solving the equation in an $L^\infty$-type space would require to embed $L^2$ into such a space and this would induce a too large loss of regularity. Therefore, we rely on Besov-type spaces of modelled distributions introduced in~\cite{Recons}.

Note that we deal with an elliptic PDE, while the theory of regularity structures was applied so far to parabolic PDEs. To obtain contractivity of the fixed point map, we cannot play with the length of the time-interval as in the parabolic setting: instead, we play with the parameter $a>0$ which induces some exponential decay on the Green's function of the operator $-\Delta +a$. To implement this idea into regularity structures, one needs to consider a model $(\Pi^{(a)},\Gamma^{(a)})$ that depends on the value $a$ and whose associated integration kernel is given by the Green's function of $-\Delta +a$. In this context, it is more natural to introduce a cutoff on the Green's function at scale $1/\sqrt a$ around the origin rather than at scale $1$: this induces some minor differences in the definitions of the norms and the corresponding analytical bounds.

\smallskip

Let us explain the main difficulties coming from the boundary conditions. Most of the PDEs solved with the theory of regularity structures have been taken under periodic boundary conditions: this choice of b.c. does not induce any specific difficulty in our setting. On the other hand, if one opts for Dirichlet b.c.~then the corresponding Green's function is no longer translation invariant so that the construction of the model and the identification of the renormalisation constants may become involved.

In a recent work~\cite{Mate} on parabolic SPDEs with b.c., Gerencs\'er and Hairer presented a nice trick to circumvent this difficulty. Using the reflection principle, one can write the Green's function under Dirichlet b.c.~as the sum of the Green's function on the whole space and a series of shifted versions of this same function. The singularities of these shifted Green's functions are localised at the boundary of the domain. Thus, one builds the model $(\Pi^{(a)},\Gamma^{(a)})$ with the (translation invariant!) Green's function of $-\Delta + a$ on the whole space, and one deals ``by hand'' with the remaining kernels. This last part involves adding some weights near the boundary in the spaces of modelled distributions.\\
An important difficulty in our case comes from the interplay of these weights with the $L^2$ setting. Indeed, to obtain a genuine distribution out of a modelled distribution with weights near the boundary, one needs these weights to be ``summable'': in $L^2$ the summability condition becomes tremendously worse than in $L^\infty$ (see Theorem \ref{Th:Recons}), and a quick look at the regularity of the objects at stake suggests that one cannot hope for closing the map \eqref{Eq:fixedpt} even in dimension $2$.\\
This is where we use the fact that the Green's function vanishes at the boundary: a careful analysis of the associated convolution operator (see Theorem \ref{Th:ConvolAbs}) then allows to get some decay of the solution near the boundary and provides the required summability of the weights.\\
The present techniques are probably not sufficient to cover the case of Neumann b.c., but let us mention the work in progress~\cite{PAMMate} that should provide the required arguments to deal with this case.

\medskip

The proof of Theorem \ref{Th:Tail} essentially follows the strategy presented in~\cite{AllezChouk}. For simplicity, we consider the operator on $(-1,1)^d$ since the size of the domain does not play any role here and since $L$ will be used as a scaling parameter.\\
The key observation is the following: the $n$-th eigenvalue $\lambda_n$ of the Anderson hamiltonian on $(-1,1)^d$ coincides (up to a correction term due to renormalisation) with $L^2\, \tilde{\lambda}_n$ where $\tilde{\lambda}_n$ is the $n$-th eigenvalue of the Anderson hamiltonian on $(-L,L)^d$ with potential $L^{d/2-2} \xi$, see Lemma \ref{Lemma:IdLaw}. Taking $x \asymp L^2$, we deduce that to obtain \eqref{Eq:Asympt} it suffices to bound from above and below the probability that $\tilde{\lambda}_n < -c$ for some constant $c>0$, uniformly over all large $L$.\\
The latter eigenvalue should be very close to $0$ with large probability since the noise term vanishes as $L$ goes to infinity: hence $\tilde{\lambda}_n < -c$ should be a large deviation event. In particular, to prove the lower bound, we build some deterministic potential $h$ whose $n$-th eigenvalue is less than $-c$ and then use the Cameron-Martin Theorem to estimate the probability that $\tilde{\xi}$ is close to $h$: in our context, this part requires to adapt some arguments from~\cite{HaiPar, HairerQuastel} on generalised convolutions encoded by labelled graphs, see the end of Subsection \ref{Subsec:Techos}.


\subsection*{Organisation of the article}
All the intermediate results are presented under Dirichlet b.c.~since this is the most involved setting. However, each time a definition, statement or proof needs to be substantially modified to deal with periodic b.c.~, we make a remark starting by (Periodic).\\
Section \ref{Sec:Abstract} introduces the material from the theory of regularity structures that we will need, and presents some slight modifications with the original setting together with a technical bound on the norm of admissible models. Section \ref{Sec:Convol} constructs the abstract convolution operator with the Green's function of $-\Delta + a$ and establishes a fixed point result that allows us in Section \ref{Sec:Anderson} to construct the resolvents of $\cH$. The remainder of Section \ref{Sec:Anderson} is devoted to the proofs of Theorems \ref{Th:Main} and \ref{Th:Tail}.

\subsection*{Notations}

We let $|x|$ denote the Euclidean norm of $x\in\R^d$ and we let $B(x,r)$ denote the ball of radius $r>0$ centred at $x$. With a slight abuse of notation, we set $|k| = \sum_{i=1}^d k_i$ for any $k\in\N^d$. We let $\cC^r$ be the set of all maps from $\R^d$ into $\R$ with a bounded $k$-th derivative for any $k\in\N^d$ such that $|k| \le r$. We further define $\ccB^r$ to be the set of all functions $f$ supported in $B(0,1)$ and whose $\cC^r$-norm is bounded by $1$. For any function $\varphi$ on $\R^d$, any $x\in\R^d$ and any $\lambda > 0$ we set
$$ \varphi^\lambda_x(\cdot) := \lambda^{-d} \varphi\big((\cdot-x)/\lambda\big)\;.$$
From now on, $L$ will always be larger than or equal to $1$.

\subsection*{Acknowledgements} I would like to express my gratitude to Khalil Chouk for the enlightening discussions that we had on this topic. I acknowledge the project SINGULAR ANR-16-CE40-0020-01 for its financial support.

\section{The abstract setting}\label{Sec:Abstract}

\subsection{Kernels}
\begin{definition}\label{Def:Kernel}
Fix $r\in \N$ and $n_a \ge 0$. We say that $P:\R^d \to \R$ is an admissible kernel if there exists a collection of smooth functions $P_-$ and $P_n:\R^d\to\R$, $n\ge n_a$ such that:\begin{enumerate}
\item $P = P_- + P_+$ where $P_+:=\sum_{n\ge n_a} P_n$,
\item for every $n\ge n_a$, $P_n$ is supported in the set $\{x\in\R^d: |x| \le 2^{-n}\}$,
\item for every given $k\in\N^d$, there exists $C>0$ such that for all $n\ge n_a$ and all $x\in\R^d$ we have
$$ | \partial^k P_n(x)| \le C2^{n(d-2+|k|)}\;.$$
\item For every $n\ge n_a$, $P_n$ annihilates all polynomials of degree at most $r$.
\end{enumerate}
\end{definition}

This is the same setting as in~\cite[Section 5]{Hairer2014} except that we restrict here to the translation invariant case where $P(x,y) = P(x-y)$ and that our cutoff on the kernel occurs at scale $2^{-n_a}$ instead of $1$: this is because we will eventually consider the Green's function of the operator $-\Delta +a$ which has exponential decay outside the ball of radius $2^{-n_a}$ where $n_a$ is the smallest integer such that $2^{-n_a} \le 1/\sqrt a$.

\subsection{Regularity structure and models}\label{Subsection:RS}

A regularity structure is a triplet $(\cA,\cT,\cG)$ satisfying the following properties. The \textit{set of homogeneities} $\cA$ is a subset of $\R$, locally finite and bounded below. The \textit{model space} $\cT$ is a graded vector space $\oplus_{\beta \in \cA} \cT_\beta$ where each $\cT_\beta$ is a Banach space. The \textit{structure group} is a group of continuous linear transformations on $\cT$ such that for every $\Gamma\in\cG$, every $\beta\in\cA$ and every $\tau\in\cT_\beta$, $\Gamma\tau - \tau \in \cT_{<\beta}$ where $\cT_{<\beta}:= \oplus_{\zeta \in \cA_{<\beta}} \cT_\zeta$ and $\cA_{<\beta} := \cA\cap (-\infty,\beta)$.\\

In the present work, we focus on a specific regularity structure associated to the Anderson hamiltonian. We start with defining recursively two sets of symbols: let $\cU$ and $\cF$ be the smallest two sets of symbols such that $\cU$ contains all polynomials $X^k$, $k\in \N^d$, $\cF$ contains the symbol $\Xi$ (that stands for the driving noise), and we have
$$ \tau\in \cU \Longrightarrow \tau \Xi \in \cF\quad \mbox{ and }\quad \tau \in \cF \Longrightarrow \cI (\tau) \in \cU\;.$$
Note that $\tau \Xi$ and $\cI(\tau)$ should simply be seen as recursively defined symbols: the notion of models introduced below will associate some analytic features to these symbols.\\

Each symbol $\tau$ comes with a \textit{homogeneity} $|\tau|$ defined recursively as follows. We set $|\Xi| = -d/2 - \kappa$ for some arbitrary $\kappa \in (0,1/2)$, as well as $|X^k| = |k|$ for every $k\in \N^d$. For every $\tau,\tau'$, we let $|\tau\tau'| = |\tau||\tau'|$. Finally, for every $\tau\in\cF$, we set
$$ |\cI(\tau)| = |\tau| + 2 \;.$$
We let $\cA=\{|\tau|:\tau \in \cU \cup \cF\}$, and we define $\cT_\beta$ as the vector space generated by all formal linear combinations of symbols of homogeneity $\beta \in \cA$. Every such vector space is finite-dimensional and we denote by $\|\cdot\|$ the corresponding Euclidean norm.\\
Regarding the structure group $\cG$, we refer the reader to~\cite[Section 8.1]{Hairer2014} for the precise construction as the details will not be needed in the present article, except in the proof of Lemma \ref{Lemma:ModelLattice}.

We also rely on the notion of \textit{admissible model} with respect to some kernel $P$.
\begin{definition}\label{Def:AdmModel}
Let $\gamma > 0$. Let $P$ be a kernel satisfying Definition \ref{Def:Kernel} for some parameters $n_a \ge 0$ and $r > \gamma$. We say that $Z=(\Pi,\Gamma)$ is an admissible model with respect to $P$ if:\begin{itemize}
\item $\Pi = (\Pi_x, x\in\R^d)$ where for every $x\in\R^d$, $\Pi_x$ is a continuous linear map from $\cT_{<\gamma}$ into the space of distributions $\ccD'(\R^d)$ such that for any compact set $C\subset \R^d$, $\sup_{x\in C} \|\Pi\|_x < \infty$ where
$$ \| \Pi\|_x := \sup_{\varphi \in \ccB^r} \sup_{\lambda\in (0,2^{-n_a}]} \sup_{\beta \in \cA_{<\gamma}} \sup_{\tau\in\cT_\beta} \frac{|\langle \Pi_x \tau,\varphi_x^\lambda \rangle |}{\|\tau\| \lambda^\beta} \;,$$
\item $\Gamma = (\Gamma_{x,y}, x,y \in \R^d)$ such that every $\Gamma_{x,y} \in \cG$, $\Gamma_{x,z}=\Gamma_{x,y}\Gamma_{y,z}$ for every $x,y,z\in\R^d$ and for any compact set $C\subset \R^d$, $\sup_{x\in C, y\in B(x,2^{-n_a})} \|\Gamma\|_{x,y} < \infty$ where
$$ \|\Gamma\|_{x,y} := \sup_{\zeta \le \beta \in \cA_{<\gamma}} \sup_{\tau \in \cT_\beta} \frac{|\Gamma_{x,y} \tau|_\zeta}{\|\tau\| |x-y|^{\beta-\zeta}}\;,$$
\item For every $k\in\N^d$, we have
\begin{align*}
\Pi_x X^k(y) &= (y-x)^k\;,\\
\Pi_x \cI \tau (y) &= \langle \Pi_x \tau, P_+(y-\cdot)\rangle - \sum_{k\in\N^d: |k| < |\tau|+2} \frac{(y-x)^k}{k!} \langle \Pi_x \tau, \partial^k P_+(x-\cdot)\rangle\;.
\end{align*}
\end{itemize}
\end{definition}

We then set
$$ \$ \Pi \$ := \sup_{x\in (-L,L)^d} \| \Pi\|_x\;,\quad \$ \Gamma \$ := \sup_{x,y\in (-L,L)^d: |x-y| \le 2^{-n_a}} \| \Gamma\|_{x,y}\;,\quad \$Z\$ := \$ \Pi \$ + \$ \Gamma \$\;.$$

The only difference with the original setting~\cite{Hairer2014} consists in the maximal length that we impose for the analytical bounds: here we take $2^{-n_a}$ instead of $1$.

Given two admissible models $Z=(\Pi,\Gamma)$ and $\bar Z = (\bar \Pi,\bar \Gamma)$, we introduce the following distance
$$ \$ Z ; \bar Z \$ := \$ \Pi - \bar \Pi\$ + \$ \Gamma - \bar \Gamma\$\;.$$

At this point, let us provide some more concrete description of the regularity structure at stake. If $\gamma \in (3/2,2-4\kappa)$, then the symbols in $\cU$ of homogeneity less than $\gamma$, listed in increasing order of homogeneity, are:\begin{itemize}[label={--}]
\item in dimension $1$: $\un$, $X_i$, $\cI(\Xi)$; and in dimension $2$: $\un$, $\cI(\Xi)$, $X_i$,
\item in dimension $3$: $\un$, $\cI(\Xi)$, $\cI(\Xi\cI(\Xi))$, $X_i$, $\cI(\Xi\cI(\Xi\cI(\Xi)))$, $\cI(\Xi X_i)$.
\end{itemize}
The symbols in $\cF$ of homogeneity less than $\gamma-d/2-\kappa$ are obtained by multiplying the symbols above by $\Xi$.\\
Furthermore, in all dimensions we have $\Gamma_{x,y} \un = \un$, $\Gamma_{x,y} X_i = X_i + (x-y)\un$. In dimensions $2$ and $3$, we have
$$ \Gamma_{x,y} \cI(\Xi) = \cI(\Xi) + \langle \Pi_x \Xi, P_+(x-\cdot) - P_+(y-\cdot) \rangle \un\;.$$
For the remaining symbols $\tau$, the expression of $\Gamma_{x,y} \tau$ is more involved and we refer the reader to~\cite[Section 8.1]{Hairer2014}.

\bigskip

In~\cite[Prop. 3.32]{Hairer2014}, it is shown that the model is completely characterised by its action on a grid and by the knowledge of $\Gamma$. For further use, we need to push this further and show that the knowledge of $\Gamma$ is actually ``unnecessary''. First, we introduce some notation. Let $\varphi$ be the scaling function of some compactly supported wavelet basis of regularity $r> \gamma$, let $\Lambda_n := 2^n \Z^d$ and $\varphi^n_x(\cdot) := 2^{nd/2} \varphi((\cdot-x) 2^{n})$. For any admissible model $(\Pi,\Gamma)$ we then set
$$ \$ \Pi \tau \Latt := \sup_{n\ge n_a} \sup_{x\in \Lambda_n \cap (-L,L)^d} \frac{|\langle \Pi_x \tau,\varphi_x^n \rangle |}{\|\tau\| 2^{-nd/2} 2^{-n\beta}}\;,$$
together with
$$ \$ \Pi \Latt :=  \sup_{\beta \in \cA_{<0}} \sup_{\tau\in\cT_\beta}\$ \Pi \tau \Latt\;.$$

\begin{lemma}\label{Lemma:ModelLattice}
Let $\gamma > 0$. There exist two constants $C >0$ and $k \in \N$ that only depend on the kernel $P_+$ and the regularity structure $\cT_{<\gamma}$ such that the following bound holds uniformly over all admissible models $Z=(\Pi,\Gamma)$, $\bar Z =(\bar\Pi,\bar\Gamma)$
$$ \$ \Pi\$(1 + \$ \Gamma \$) \le C (1 + \$ \Pi \Latt^k)\;, \quad\$ Z;\bar Z\$ \le C \big(1+\$\bar \Pi\Latt^k\big) \big(\$ \Pi-\bar \Pi \Latt+\$ \Pi-\bar \Pi \Latt^k\big)\;.$$
\end{lemma}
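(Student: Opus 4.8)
The plan is to reconstruct the model $(\Pi,\Gamma)$, and bound its norm, starting only from the wavelet coefficients $\langle \Pi_x\tau,\varphi^n_x\rangle$ on the grids $\Lambda_n\cap(-L,L)^d$ for the finitely many symbols $\tau$ of negative homogeneity. The key tool is \cite[Prop. 3.32]{Hairer2014}, which says a model is determined by its action on a grid together with $\Gamma$; here I want to show $\Gamma$ is itself recoverable. I proceed by induction on the (finitely many) homogeneities $\beta\in\cA_{<\gamma}$, in increasing order. The base cases are the purely polynomial symbols $X^k$ and $\un$: for these the admissibility relations $\Pi_xX^k(y)=(y-x)^k$ determine $\Pi$ exactly and give $\Gamma_{x,y}X^k$ explicitly, so there is nothing to estimate. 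For a symbol of the form $\tau=\cI(\sigma)$, the defining identity
$$\Pi_x\cI\sigma(y)=\langle\Pi_x\sigma,P_+(y-\cdot)\rangle-\sum_{|k|<|\sigma|+2}\frac{(y-x)^k}{k!}\langle\Pi_x\sigma,\partial^kP_+(x-\cdot)\rangle$$
expresses $\Pi_x\cI\sigma$ through $\Pi_x\sigma$ and the fixed kernel $P_+$, and the structure-group component $\Gamma_{x,y}\cI\sigma$ through the analogous differences; so once $\Pi_x\sigma$ is controlled, $\Pi_x\cI\sigma$ and the corresponding part of $\Gamma$ are controlled by the bounds on the kernel pieces $P_n$ from Definition \ref{Def:Kernel}. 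The remaining, genuinely nontrivial case is a product $\tau=\sigma\Xi$ with $\sigma\in\cU$: here $\Pi_x\tau$ is \emph{not} determined by $\Pi_x\sigma$ and $\Pi_x\Xi$ alone, which is exactly why one must keep $\langle\Pi_x(\sigma\Xi),\varphi^n_x\rangle$ among the grid data.

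The main analytic step is then a wavelet-interpolation argument: given the values $\langle\Pi_x\tau,\varphi^n_x\rangle$ at grid points $x\in\Lambda_n\cap(-L,L)^d$, controlled by $\$\Pi\tau\Latt$, I must recover $\langle\Pi_x\tau,\psi^\lambda_x\rangle$ for arbitrary $\psi\in\ccB^r$, arbitrary $x\in(-L,L)^d$ and arbitrary scale $\lambda\in(0,2^{-n_a}]$, with the bound $\lesssim \$\Pi\tau\Latt\,\lambda^\beta$. This is done along the lines of \cite[Sec. 3.3, proof of Prop. 3.32]{Hairer2014}: expand $\psi^\lambda_x$ in the wavelet basis at scale $2^{-n}$ with $2^{-n}\sim\lambda$, so that the main term is a sum over nearby grid points $z\in\Lambda_n$ of $\langle\Pi_z\tau,\varphi^n_z\rangle$ against wavelet inner products, plus lower-scale corrections obtained via the telescoping $\Pi_x=\Pi_z\Gamma_{z,x}$ — and here one uses the \emph{already established} control of $\Gamma$ on symbols of strictly lower homogeneity (this is where the polynomially-many powers $\$\Pi\Latt^k$ enter: each reexpansion of $\Gamma_{z,x}\tau$ brings in products of lower-level coefficients, and iterating over the finite structure produces a fixed power $k$). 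For the symbol $\Xi$ itself, which has the lowest negative homogeneity, the grid data plus the translation structure $\Pi_x\Xi=\Pi_0\Xi$ (up to the action of $\Gamma$, which is trivial on $\Xi$ beyond a scalar) make the recovery elementary. Summing the geometric series in scales gives the claimed $\lambda^\beta$ behaviour, hence a bound on $\$\Pi\tau\$$ and, through the $\cI$ and product relations, on $\$\Gamma\$$.

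Once the pointwise recovery is in place, the first inequality $\$\Pi\$(1+\$\Gamma\$)\le C(1+\$\Pi\Latt^k)$ follows by running the induction through all $\beta\in\cA_{<\gamma}$ and multiplying out the finitely many contributions; the factor $(1+\$\Gamma\$)$ on the left is absorbed because each recovery step already paid for it on lower levels, producing the single global power $k$. For the difference bound, I run the same argument for two models $Z,\bar Z$ simultaneously: subtracting the wavelet expansions and using the multilinear/Leibniz structure of the $\cI$- and product-relations, every term is either a difference of $\Pi$-coefficients (bounded by $\$\Pi-\bar\Pi\Latt$) times coefficients of one model, or a coefficient difference times a $\Gamma-\bar\Gamma$ term, the latter again reduced to $\Pi-\bar\Pi$ differences on lower levels via the first part; collecting powers of $\$\bar\Pi\Latt$ on the ``frozen'' model gives the stated form $C(1+\$\bar\Pi\Latt^k)(\$\Pi-\bar\Pi\Latt+\$\Pi-\bar\Pi\Latt^k)$.

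The step I expect to be the main obstacle is the bookkeeping in the inductive wavelet recovery for the product symbols $\sigma\Xi$ and their images under $\cI$: one must verify that the reexpansion $\Pi_x=\Pi_z\Gamma_{z,x}$ applied to $\tau$ only ever invokes $\Gamma$ on symbols of \emph{strictly} smaller homogeneity (so the induction is well-founded), check that the wavelet sums converge with the right scaling exponent $\beta$ even when $\beta<0$ is quite negative (as for $\Xi$ in $d=3$), and track the power $k$ uniformly — it must depend only on $P_+$ and $\cT_{<\gamma}$, not on the models. This is essentially a careful, quantitative rerun of the reconstruction/extension machinery of \cite[Sec. 3]{Hairer2014} in our grid-only formulation, adapted to the cutoff scale $2^{-n_a}$ instead of $1$, which as noted only changes the admissible range of $\lambda$ and $n$ and not the structure of the estimates.
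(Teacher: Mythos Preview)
Your plan is correct and follows the same strategy as the paper: recover $\Gamma$ from $\Pi$ via the admissibility relations, use \cite[Prop.~3.32]{Hairer2014} to pass from grid data to the full model on negative-homogeneity symbols, and handle positive-homogeneity ones separately (the paper invokes \cite[Prop.~3.31]{Hairer2014} rather than the $\cI$-identity directly, but this is the same content). The one organisational difference worth noting---precisely because you flag well-foundedness of the induction as the main obstacle---is that the paper does \emph{not} induct on homogeneity. It inducts instead on the filtration $\cF^{(n)}$ of \cite[Sec.~8.3]{Hairer2014}, defined by recursion depth in the coproduct: for $\tau\in\cF^{(n+1)}$ the Hopf-algebra formula $\Gamma_{x,y}\tau=(I\otimes f_x\circ\cA\otimes f_y)(\Delta\otimes I)\Delta\tau$ only ever produces characters $f_x(\cJ_\ell\tau')=-\langle\Pi_x\tau',\partial^\ell P_+(x-\cdot)\rangle$ with $\tau'\in\cF^{(n)}$, so the bound $\$\Gamma\$_{\cF^{(n+1)}}\lesssim 1+\$\Pi\$_{\cF^{(n)}}^k$ closes automatically with no symbol-by-symbol analysis. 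Your homogeneity ordering does happen to work for this particular structure (every $\cI(\sigma)$ has positive homogeneity in $d\le 3$, every $\sigma\Xi$ negative, and the subtrees appearing in $\Gamma_{x,y}(\sigma\Xi)$ have strictly lower homogeneity), but verifying this is exactly the bookkeeping you anticipated; the $\cF^{(n)}$ route sidesteps it entirely and gives a proof that would carry over unchanged to more elaborate regularity structures.
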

The proof relies on the algebraic construction of the structure group from~\cite[Section 8]{Hairer2014}: we do not recall the notations from there since this is the only place where they are used in the present article.
\begin{proof}
Recall the sets $\cF^{(n)}$, $n\ge 0$ defined in~\cite[Subsection 8.3]{Hairer2014} by setting $\cF^{(0)} = \emptyset$ and recursively
$$ \cF^{(n+1)} := \big\{ \tau \in \cF_{F}: \Delta \tau \in \cH_F \otimes \langle \Alg(\cF^{(n)})\rangle \big\}\;.$$
As shown in~\cite[Subsection 8.3]{Hairer2014}, for every $n\ge 0$ we have
$$ \cF^{(n+1)} = \big\{ \tau \in \cF_{F}: \Delta \tau \in \langle \cF^{(n+1)} \rangle \otimes \langle \Alg(\cF^{(n)})\rangle \big\}\;,$$
and $\tau \in\Alg(\cF^{(n)}) \Rightarrow \cA \tau \in\Alg(\cF^{(n)})$ where $\cA$ is the antipode in $\cH_+$. In addition, the non-decreasing sequence $\cF^{(n)}$, $n\ge 0$ exhausts the whole set $\cF_{F} \cap \cA_{<\gamma}$ within a finite number of steps: our proof will thus rely on an induction on $n\ge 0$.\\
Recall that for any admissible model, we have
$$ \Gamma_{x,y} \tau = (I \otimes f_x\circ\cA \otimes f_y)(\Delta\otimes I)\Delta \tau\;.$$
Henceforth, for any $\tau \in \cF^{(n+1)}$, $\Gamma_{x,y} \tau$ is a linear combination of terms of the form
$$ \tau_1 f_x(\tau_2) f_y(\tau_3)\;,\quad \tau_1\in \cF^{(n+1)}, \tau_2,\tau_3 \in \Alg(\cF^{(n)})\;.$$
Recall that if $\tau' \in \Alg(\cF^{(n)})$ then $\tau' = X^k \prod_i \cJ_{\ell_i} \tau'_i$ for some $k,\ell_i \in \N^d$ and $\tau'_i \in \cF^{(n)}$ such that $|\cJ_{\ell_i} \tau'_i|>0$. Moreover, for any admissible model, we have $f_x(X^k) = (-x)^k$ and $f_x(\cJ_{\ell_i} \tau'_i) = - \langle \Pi_x  \tau'_i , \partial^{\ell_i} P_+(x-\cdot)\rangle$.\\
Putting everything together and using the notation $\$ \Pi \$_{\cF^{(n)}}$, resp.  $\$ \Gamma \$_{\cF^{(n)}}$, to denote the restriction of the norm on $\Pi$, resp. $\Gamma$, to all $\tau\in \cF^{(n)}$, we deduce that there exists an integer $k\ge 1$ such that
$$ \$ \Gamma \$_{\cF^{(n+1)}} \lesssim 1 + \$ \Pi \$_{\cF^{(n)}}^k\;,\quad \$ \Gamma - \bar\Gamma \$_{\cF^{(n+1)}} \lesssim \big(\$ \Pi - \bar \Pi \$_{\cF^{(n)}} + \$ \Pi - \bar \Pi \$_{\cF^{(n)}}^k\big)\big(1+ \$ \bar \Pi \$_{\cF^{(n)}}^k\big)\;.$$

Let us denote by $\$ \Pi \$_{\cF^{(n+1)},<\beta}$ the restriction of the norm to all elements whose homogeneity is strictly smaller than $\beta\in\R$. By~\cite[Prop 3.32]{Hairer2014} and using the fact that $\$ \Pi \$_{\Lambda,\cF^{(n+1)}}$ concerns elements with negative homogeneity, we have for every $n\ge 0$
$$ \$ \Pi \$_{\cF^{(n+1)},<0} \lesssim (1+ \$ \Gamma \$_{\cF^{(n+1)}}) \$ \Pi \$_{\Lambda,\cF^{(n+1)}}\;.$$
In the case where we are given two admissible models, this generalises into
\begin{align*}
\$ \Pi-\bar \Pi \$_{\cF^{(n+1)},<0} \lesssim (1+ \$ \Gamma \$_{\cF^{(n+1)}}) \$ \Pi-\bar \Pi \$_{\Lambda,\cF^{(n+1)}} + \$ \Gamma-\bar\Gamma \$_{\cF^{(n+1)}}\$ \bar \Pi \$_{\Lambda,\cF^{(n+1)}} \;.
\end{align*}
At this point, we observe that the inequalities collected so far ensures the following: if the bounds of the statement hold when we only consider elements in $\cF^{(n)}$, then they propagate to all elements in $\cF^{(n+1)}$ with negative homogeneity.\\
Let us finally deal with elements $\tau \in \cF^{(n+1)}$ with strictly positive homogeneity. To that end, we rely on~\cite[Prop 3.31]{Hairer2014} which ensures that if $\tau \in \cF^{(n+1)}$ is such that $|\tau| = \beta > 0$ then
$$  \sup_{x\in (-L,L)^d} \sup_{\eta \in \ccB^r} \sup_{\lambda\in (0,2^{-n_a}]}\frac{|\langle \Pi_x \tau,\eta_x^\lambda \rangle |}{\|\tau\| \lambda^\beta} \lesssim \$\Pi \$_{\cF^{(n+1)},<\beta} \$ \Gamma\$_{\cF^{(n+1)}}\;,$$
and
\begin{align*}
\sup_{x\in (-L,L)^d}\sup_{\eta \in \ccB^r} \sup_{\lambda\in (0,2^{-n_a}]}\frac{|\langle \Pi_x -\bar\Pi_x \tau,\eta_x^\lambda \rangle |}{\|\tau\| \lambda^\beta} &\lesssim \$\Pi - \bar \Pi \$_{\cF^{(n+1)},<\beta} \$ \Gamma\$_{\cF^{(n+1)}}\\
&+ \$\Gamma - \bar \Gamma \$_{\cF^{(n+1)}} \$ \bar\Pi\$_{\cF^{(n+1)},<\beta}\;,
\end{align*}
Applying these bounds first to the element with the smallest positive homogeneity in $\cF^{(n+1)}$ and then, inductively in increasing order of homogeneities, one deduces that there exists $k',k''\ge 0$ such that the following bounds hold
$$ \$ \Pi \$_{\cF^{(n+1)}} \lesssim (1+ \$ \Gamma \$_{\cF^{(n+1)}})^{k'} \$ \Pi \$_{\Lambda,\cF^{(n+1)}}\;,$$
and 
\begin{align*}
\$ \Pi-\bar \Pi \$_{\cF^{(n+1)}} \lesssim (1+ \$ \Gamma \$_{\cF^{(n+1)}})^{k''}\Big( \$ \Pi-\bar \Pi \$_{\Lambda,\cF^{(n+1)}} + \$ \Gamma-\bar\Gamma \$_{\cF^{(n+1)}} (1+ \$ \bar\Gamma \$_{\cF^{(n+1)}})^{k'}  \$ \bar \Pi \$_{\Lambda,\cF^{(n+1)}}\Big) \;.
\end{align*}
A simple recursion on $n\ge 0$ then completes the proof (note that the integer $k$ of the statement may be larger than the one introduced above).
\end{proof}

\subsection{Spaces of modelled distributions}

Given an admissible model $(\Pi,\Gamma)$ and a parameter $a\ge 1$, we introduce the following notations. We let $L^p_x := L^p(\R^d,dx)$. We let $P := \partial (-L,L)^d$ be the boundary of the hypercube $(-L,L)^d$, and we set
$$ |x|_P := \frac1{\sqrt a} \wedge \dist(x,P)\;.$$

\begin{definition}
Take $\gamma >0$, $\sigma \le \gamma$ and $p \in [1,\infty]$. Let $\cD^{\gamma,\sigma}_{p,\infty}$ be the Banach space of all maps $f:\R^d\mapsto \cT_{<\gamma}$ such that $f$ vanishes outside $[-L,L]^d$ and
\begin{align*}
\$ f\$_{\gamma,\sigma} &:= \sum_{\zeta\in \cA_{<\gamma}} \Big\| \frac{|f(x)|_\zeta}{|x|_{P}^{\sigma-\zeta}}\Big\|_{L^p_x}+ \sum_{\zeta\in \cA_{<\gamma}} \sup_{h\in\R^d:|h|<1/\sqrt a} \Big\| \frac{\big| f(x+h) - \Gamma_{x+h,x} f(x) \big|_\zeta}{|x|_P^{\sigma-\gamma} |h|^{\gamma-\zeta}} \un_{\{|x|_P > 3|h|\}}\Big\|_{L^p_x} < \infty\;.
\end{align*}
\end{definition}

Note that this space is in the flavour of the classical Besov spaces $\cB_{p,\infty}^\gamma$. Actually if we disregard the weights near the boundary $P$ and if the regularity structure only consists of polynomials, then these two spaces coincide as long as $\gamma$ is non-integer.\\
In the original version of the theory of regularity structures~\cite{Hairer2014}, the spaces of modelled distributions are endowed with H\"older-type norms: this corresponds to taking $p=\infty$ here. An extension of the analytical part of the theory to the whole class of Besov-type spaces was presented in~\cite{Recons}.

\medskip

The parameter $\sigma$ is the exponent of the weight that we impose near the boundary of the domain. This weight allows for an explosion of all components of the modelled distribution that lie at levels higher than $\sigma$, but forces a decay of the other components. Such weights on the time coordinate already appeared in the parabolic setting~\cite[Section 6]{Hairer2014} to allow for irregular initial conditions (and to obtain contractivity of the fixed point map). Recently, Gerencs\'er and Hairer~\cite{Mate} introduced a more general framework of modelled distributions weighted near time $0$ and near the boundary of the spatial domain: in this setting, they were able to solve singular SPDEs in domains with boundaries.

\begin{remark}[Periodic]
In the periodic case, the definition becomes the following. Take $\gamma >0$ and $p \in [1,\infty]$. Let $\cD^{\gamma}_{p,\infty}$ be the Banach space of all maps $f:\R^d\mapsto \cT_{<\gamma}$ such that $f$ is periodic and
\begin{align*}
\sum_{\zeta\in \cA_{<\gamma}} \Big\| |f(x)|_\zeta \Big\|_{L^p_x((-L,L)^d)}+ \sum_{\zeta\in \cA_{<\gamma}} \sup_{h\in\R^d:|h|<1/\sqrt a} \Big\| \frac{\big| f(x+h) - \Gamma_{x+h,x} f(x) \big|_\zeta}{|h|^{\gamma-\zeta}}\Big\|_{L^p_x((-L,L)^d)} < \infty\;.
\end{align*}
\end{remark}

We introduce the notations $\cQ_\zeta f$ and $f_\zeta$ to denote the projection of $f$ on $\cT_\zeta$. We also let $\langle f, \tau\rangle$ be the projection of $f$ on the vector space generated by the symbol $\tau$. We conclude this subsection with introducing a distance between modelled distributions associated with two distinct models $(\Pi,\Gamma)$, $(\bar\Pi,\bar\Gamma)$:
\begin{align*}
\$f;\bar f\$ &:=  \sum_{\zeta\in \cA_{<\gamma}} \Big\| \frac{|f(x)-\bar f(x)|_\zeta}{|x|_{P}^{\sigma-\zeta}}\Big\|_{L^p_x}\\
&+ \sum_{\zeta\in \cA_{<\gamma}} \sup_{h\in\R^d:|h|<1/\sqrt a}\Big\| \frac{\big| f(x+h) - \bar f(x+h) - \Gamma_{x+h,x} f(x) + \bar\Gamma_{x+h,x} \bar f(x) \big|_\zeta}{|x|_P^{\sigma-\gamma} |h|^{\gamma-\zeta}} \un_{\{|x|_P > 3|h|\}}\Big\|_{L^p_x}
\end{align*}

\subsection{Reconstruction}

Let $\cB^\alpha_{p,\infty}(D)$ be the usual Besov space of regularity index $\alpha$ on some regular domain $D\subset\R^d$. The goal of this subsection is to construct a continuous linear operator $\cR$ that maps in a consistent way elements in $\cD^{\gamma,\sigma}_{p,\infty}$ to genuine distributions. Consistency here simply means that the local behaviour of the distribution $\cR f$ around some point $x$ should match $\Pi_x f(x)$ up to some negligible error term. Such a result is usually referred to as a \textit{reconstruction theorem}~\cite[Th. 3.10]{Hairer2014}.

In the unweighted setting, a reconstruction theorem in Besov-type spaces of modelled distributions was established in~\cite[Th 3.1]{Recons}. Since reconstruction is a local operation and since away from the boundary $P$ our weighted spaces of modelled distributions are equivalent with their unweighted counterparts, the aforementioned result allows to associate to any $f\in\cD^{\gamma,\sigma}_{p,\infty}$ an element $\tilde{\cR} f$ in the dual space of all test functions supported away from $P$ and such that $f\mapsto\tilde{\cR} f$ is a continuous, linear map from $\cD^{\gamma,\sigma}_{p,\infty}$ into $\cB^{\alpha}_{p,\infty}(D)$ for any compact set $D\subset (-L,L)^d$ and where $\alpha := \min(\cA\backslash \N)\wedge\gamma$. Furthermore, we have the bound
\begin{equation}\label{Eq:ReconsBound}
\sup_{\lambda \in (0,1/\sqrt a)} \Big\| \sup_{\varphi \in \ccB^r} \frac{|\langle \tilde{\cR} f - \Pi_x f(x) , \varphi^\lambda_x\rangle|}{|x|_P^{\sigma-\gamma}\lambda^\gamma} \un_{\{|x|_P > 3 \lambda\}} \Big\|_{L^p_x} \lesssim \$f\$\;,
\end{equation}
uniformly over all $f\in\cD^{\gamma,\sigma}_{p,\infty}$.\\
In the case where we deal with two models $(\Pi,\Gamma)$, $(\bar\Pi,\bar\Gamma)$, we have the bound
\begin{equation}\label{Eq:ReconsBound2}
\sup_{\lambda \in (0,1/\sqrt a)} \Big\| \sup_{\varphi \in \ccB^r} \frac{|\langle \tilde{\cR} f - \bar{\tilde{\cR}} \bar f - \Pi_x f(x) + \Pi_x \bar f(x) , \varphi^\lambda_x\rangle|}{|x|_P^{\sigma-\gamma}\lambda^\gamma} \un_{\{|x|_P > 3 \lambda\}} \Big\|_{L^p_x}  \lesssim \$f;\bar f\$\;,
\end{equation}
uniformly over all $f\in\cD^{\gamma,\sigma}_{p,\infty}$ and all $\bar f \in \bar{\cD}^{\gamma,\sigma}_{p,\infty}$.\\

The following result extends the reconstruction operator up to the boundary $P$ of the domain.
\begin{theorem}[Reconstruction]\label{Th:Recons}
Take $\gamma \in \R_+\backslash \N$, $\sigma\le \gamma$ and set $\alpha := \min(\cA \backslash \N) \wedge \gamma$. Assume that
$$ -1+ \frac1{p} < \sigma \le \gamma\;.$$
Set $\bar\alpha = \alpha\wedge\sigma$. There exists a unique continuous linear operator $\cR:\cD^{\gamma,\sigma}_{p,\infty}\to \cB^{\bar\alpha}_{p,\infty}(\R^d)$ such that $\cR f = \tilde{\cR} f$ away from $P$, $\cR f = 0$ outside $(-L,L)^d$ and for any $C>0$ we have
\begin{equation}\label{Eq:BoundReconsBdry}
\sup_{\lambda \in (0,1/\sqrt a)} \Big\| \sup_{\eta \in \ccB^r} \Big| \frac{\langle \cR f , \eta^\lambda_x\rangle}{\lambda^\sigma} \Big| \un_{\{|x|_P \le C \lambda\}} \Big\|_{L^p_x} \lesssim \$ f \$\;,
\end{equation}
uniformly over all $f\in \cD^{\gamma,\sigma}_{p,\infty}$ and all $L\ge 1$. In the case where we deal with two models $(\Pi,\Gamma)$ and $(\bar\Pi,\bar\Gamma)$, the same holds with $\cR f$ replaced by $\cR f-\bar\cR \bar f$ and with $\$ f \$$ replaced by $ \$ f ;\bar f \$$.
\end{theorem}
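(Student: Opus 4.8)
The plan is to build $\cR f$ by gluing together the interior reconstruction $\tilde{\cR} f$ with a near-boundary contribution that is controlled by the $\sigma$-weights. First I would fix a dyadic partition: for each scale $2^{-n}\le 1/\sqrt a$, cover the $2^{-n}$-neighbourhood of $P$ by finitely overlapping balls of radius $\sim 2^{-n}$, and use a smooth partition of unity $(\psi^n_i)_i$ subordinate to this cover, together with cutoffs $\chi_n$ that are $1$ on the $2^{-n}$-tube around $P$ and $0$ outside the $2^{-n+1}$-tube. The natural candidate is to define $\cR f$ via its pairings with test functions $\eta^\lambda_x$: away from $P$ one reads off $\tilde{\cR} f$, and near $P$ one takes the "trivial" local approximation $\langle \Pi_x f(x),\eta^\lambda_x\rangle$ restricted via the $\psi^n_i$, then checks that the gluing is consistent across scales. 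Equivalently, following the strategy of~\cite[Th 3.1]{Recons} and~\cite{Mate}, I would verify that the family $(\Pi_x f(x))_{x}$ defines a "coherent germ" in the Besov sense all the way up to $P$, once one knows that $|f(x)|_\zeta \lesssim |x|_P^{\sigma-\zeta}$ absorbs the blow-up: the key point is that for $|x|_P\le C\lambda$ one has $|\langle \Pi_x f(x),\eta^\lambda_x\rangle|\lesssim \sum_\zeta \|f(x)\|_\zeta |x|_P^{\zeta}\wedge\lambda^{\zeta}\cdot\ldots$, and the worst term is $\zeta$ close to but below $\sigma$, giving exactly $\lambda^\sigma$ after integrating in $x$ over the tube (whose width is $\sim\lambda$, contributing a factor $\lambda^{1/p}$ in $L^p_x$, hence the hypothesis $\sigma>-1+1/p$).

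Concretely the key steps, in order, are: (1) reduce to controlling $\langle\cR f,\eta^\lambda_x\rangle$ for $\eta\in\ccB^r$, $\lambda\in(0,1/\sqrt a)$, and $x$ in the $C\lambda$-tube around $P$, since away from there \eqref{Eq:ReconsBound} already gives the bound with $\gamma$-weights which dominate $\sigma$-weights; (2) for such $x,\lambda$ show $|\langle\Pi_x f(x),\eta^\lambda_x\rangle|\lesssim \$f\$_{\text{loc}}\,\lambda^\sigma$ pointwise modulo a function that is $L^p_x$-small, using $\|\Pi_x\tau\|\lesssim\$\Pi\$\,\lambda^\zeta$ and $|f(x)|_\zeta\lesssim|x|_P^{\sigma-\zeta}\le\lambda^{\sigma-\zeta}$ on the tube (here one splits $\zeta\ge\sigma$ and $\zeta<\sigma$ — for $\zeta\ge\sigma$ use $|x|_P^{\sigma-\zeta}\lambda^\zeta$ and $|x|_P\gtrsim$ nothing, so instead use $\lambda^{\sigma}$ directly from $\lambda\ge|x|_P/C$; for $\zeta<\sigma$, $\lambda^{\sigma-\zeta}\lambda^\zeta=\lambda^\sigma$); (3) prove the coherence/consistency estimate: for two scales $\lambda<\mu$ the difference $\langle\Pi_x f(x)-\Pi_y f(y),\eta^\lambda_x\rangle$ with $|x-y|\lesssim\mu$ is bounded using $f(y)=\Gamma_{y,x}f(x)+(f(y)-\Gamma_{y,x}f(x))$, the second-kind bound in $\$f\$_{\gamma,\sigma}$ giving $|x|_P^{\sigma-\gamma}\mu^{\gamma-\zeta}$, which after the tube integration again produces $\mu^\sigma$; (4) invoke the Besov reconstruction/sewing machinery of~\cite{Recons} (or an explicit dyadic-wavelet summation against $\varphi^n_x$) to assemble these local data into a genuine element of $\cB^{\bar\alpha}_{p,\infty}(\R^d)$, with $\bar\alpha=\alpha\wedge\sigma$ reflecting that the regularity is capped both by the interior germ ($\alpha$) and by the boundary decay ($\sigma$); (5) uniqueness follows because two candidates differ by something supported on $P$, which is Lebesgue-null, hence zero as an element of $\cB^{\bar\alpha}_{p,\infty}$ once $\bar\alpha>-1+1/p$; (6) the two-model bound \eqref{Eq:BoundReconsBdry} is obtained by running the same argument on the difference, using \eqref{Eq:ReconsBound2} for the interior part and the linearity of all estimates in steps (2)--(3) in the obvious "difference of products" way, i.e.\ $\Pi_x f(x)-\bar\Pi_x\bar f(x)=(\Pi_x-\bar\Pi_x)f(x)+\bar\Pi_x(f(x)-\bar f(x))$.

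I expect the main obstacle to be step (4): making precise that the near-boundary local germs $x\mapsto\Pi_x f(x)$, which only satisfy coherence with the \emph{anisotropic} weight $|x|_P^{\sigma-\gamma}$ rather than a clean Hölder/Besov modulus, genuinely patch together into a distribution on all of $\R^d$ (not just on a tube) and that the norm in $\cB^{\bar\alpha}_{p,\infty}(\R^d)$ is finite — the subtlety is that near $P$ one is effectively reconstructing with "regularity" $\sigma$, which may be below $0$, so one must run the Besov sewing lemma in its negative-regularity form and check that the series $\sum_n 2^{-n\bar\alpha}\|\langle\cR f,\varphi^n_\cdot\rangle\|_{L^p}$ converges uniformly in $L\ge1$. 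A secondary technical point is verifying that the uniformity in $L$ genuinely holds: since $|x|_P$ is defined with the truncation at $1/\sqrt a$ and the tube around $P$ has measure $\lesssim L^{d-1}\lambda$ rather than $\lambda$, one must be careful that the stated bound is not claiming $L$-independence of the constant in a way that fails — but in fact the $L^p_x$ norm is taken over the tube and the $L^{d-1}$ factor is harmless because we only claim $\lesssim\$f\$$, and $\$f\$$ itself is an $L^p_x$ norm over $[-L,L]^d$, so the geometry matches. I would isolate the core estimate (steps (2)--(3)) as the heart of the proof and present step (4) as an application of~\cite[Th 3.1]{Recons} adapted to the weighted setting exactly as in~\cite{Mate}, only spelling out the new point, namely the interaction of the $\sigma$-weight with the $L^p$ integration over a shrinking tube.
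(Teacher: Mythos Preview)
Your overall strategy---reconstruct on the interior via $\tilde{\cR} f$, then control the near-boundary behaviour through the germ $\Pi_x f(x)$---is the right picture, and your steps (1)--(3) essentially reproduce the two estimates the paper uses. However, the paper organises the argument differently and your proposal has one genuine gap.

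\textbf{Different route.} Rather than treating the germ $x\mapsto\Pi_x f(x)$ as input to a weighted Besov sewing (your step (4)), the paper isolates a standalone \emph{extension lemma}: any distribution $\xi$ on $\R^d\setminus P$ satisfying the annular bound
\[
\sup_{\lambda} \Big\| \sup_{\eta\in\ccB^r}\frac{|\langle\xi,\eta^\lambda_x\rangle|}{\lambda^\sigma}\,\un_{\{3\lambda<|x|_P<c\lambda\}}\Big\|_{L^p_x}<\infty
\]
admits a unique extension through $P$ satisfying the full boundary bound. This decouples the analytic extension problem from regularity structures entirely; the theorem then reduces to checking that $\tilde{\cR}f$ satisfies the annular hypothesis, which is exactly your steps (2)--(3) applied on the annulus. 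The extension lemma is proved by an explicit dyadic partition of unity near $P$ (summing $\chi(2^n|y|_P)\psi^n_x(y)$ over $n$ and $x\in\Lambda_n$), with a direct $L^p$ estimate exploiting $\sigma>-1+1/p$. This is more modular than invoking a weighted version of~\cite[Th~3.1]{Recons}, which you would otherwise have to state and prove.

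\textbf{Gap in uniqueness.} Your step (5) is wrong as written. You argue that two candidates differ by something supported on $P$, hence zero in $\cB^{\bar\alpha}_{p,\infty}$ ``once $\bar\alpha>-1+1/p$''. But the whole point of the theorem (see the paragraph after its statement) is that $\alpha$---and hence $\bar\alpha=\alpha\wedge\sigma$---is allowed to be \emph{below} $-1+1/p$; in dimension $3$ this is exactly what happens. A distribution supported on a hyperplane can perfectly well belong to $\cB^{\bar\alpha}_{p,\infty}$ when $\bar\alpha<-1+1/p$ (a surface Dirac has regularity $-1+1/p$). Uniqueness must instead be extracted from the \emph{near-boundary bound} \eqref{Eq:BoundReconsBdry} with exponent $\sigma$, not from global Besov regularity: one shows that any extension satisfying that bound has $|\langle\xi,\varphi_{x_0}J_{n_0}\rangle|\lesssim 2^{-n_0(1-1/p+\sigma)}\to 0$ for smooth indicators $J_{n_0}$ of the $2^{-n_0}$-tube around $P$, and it is here that $\sigma>-1+1/p$ enters. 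Your proposal does contain the ingredients for this (you correctly identify $\sigma>-1+1/p$ as the condition that makes the tube integration work), but the uniqueness step must be rewritten to use $\sigma$ rather than $\bar\alpha$.
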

\begin{remark}[Periodic]
In the periodic case, \eqref{Eq:BoundReconsBdry} remains true if $\lambda^\sigma$ is replaced by $\lambda^\alpha$, $L^p_x$ is replaced by $L^p_x((-L,L)^d)$ and the indicator is removed. This situation is already covered by~\cite[Th 3.1]{Recons}.
\end{remark}
\begin{remark}
Note that the Besov norm of $\cR f$ is bounded by $\$f\$$ times a constant uniformly bounded over all $f\in \cD^{\gamma,\sigma}_{p,\infty}$. However this constant is \textit{not} uniformly bounded over all $a\ge 1$.
\end{remark}

The condition $-1+1/p < \sigma$ ensures that the weights near the boundary do not explode too fast and allow to define a unique distribution (recall that a Dirac mass has regularity $-1+1/p$ in the Besov scale $\cB_{p,\infty}$). The reader familiar with the reconstruction theorem would probably have expected the more restrictive condition $-1+1/p < \alpha\wedge\sigma$. Here, we are actually able to deal with the case where $\alpha$ is smaller than $-1+1/p$ as long as $\sigma$ remains strictly above this threshold. This is in the same flavour as~\cite[Th 4.10]{Mate} though our proof is different. The main technical step to establishing Theorem \ref{Th:Recons} is the following extension result.
\begin{lemma}\label{Lemma:Recons}
Let $\sigma > -1 + 1/p$. Let $\xi$ be a distribution on $\R^d\backslash P$ such that for any $c>0$ we have
\begin{equation}\label{Eq:ReconsNearBdry}
\sup_{\lambda \in (0,1/\sqrt a)} \Big\| \sup_{\eta \in \ccB^r} \frac{\big|\langle \xi, \eta^\lambda_x \rangle\big|}{\lambda^\sigma} \un_{\{3\lambda < |x|_P < c \lambda\}}\Big\|_{L^p_x} < \infty\;.
\end{equation}
Then, $\xi$ admits a unique extension into a distribution on $\R^d$ that satisfies for all $C>0$ the bound
\begin{equation}\label{Eq:ReconsNearBdry2}
\sup_{\lambda \in (0,1/\sqrt a)} \Big\| \sup_{\eta \in \ccB^r} \frac{\big|\langle \xi, \eta^\lambda_x \rangle\big|}{\lambda^\sigma} \un_{\{|x|_P < C \lambda\}}\Big\|_{L^p_x} < \infty\;.
\end{equation}
\end{lemma}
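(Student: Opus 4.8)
The strategy is to build the extension by summing suitably scaled pieces of $\xi$ localised in dyadic shells parallel to the boundary $P$, and to control the sum using the $L^p$ integrability coming from the hypothesis $\sigma > -1+1/p$. Concretely, for each $n\ge n_a$ let $S_n := \{x : 2^{-n-1} < |x|_P \le 2^{-n}\}$ be the dyadic shell at distance $\sim 2^{-n}$ from $P$, and fix a smooth partition of unity $(\chi_n)$ subordinate to a slightly fattened version of these shells, with $\chi_n$ supported in $\{2^{-n-2} < |x|_P < 2^{-n+1}\}$ and $|\partial^k \chi_n| \lesssim 2^{n|k|}$. Away from $P$ only finitely many $\chi_n$ are non-zero near any point, so $\sum_n \chi_n \xi$ is a well-defined distribution on $\R^d\backslash P$ equal to $\xi$ there; the point is to show that, as a sum of distributions on all of $\R^d$, it converges and defines an extension satisfying \eqref{Eq:ReconsNearBdry2}. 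The natural candidate is thus $\bar\xi := \sum_{n\ge n_a} \chi_n \xi$, where each $\chi_n\xi$ is first understood as a compactly supported distribution on $\R^d\backslash P$ and then trivially (it has compact support away from $P$) as one on $\R^d$.

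The heart of the matter is the quantitative bound. Fix $\lambda \in (0,1/\sqrt a)$, $\eta\in\ccB^r$, and test $\bar\xi$ against $\eta^\lambda_x$ for $x$ with $|x|_P < C\lambda$. We want to show that, after dividing by $\lambda^\sigma$, this has finite $L^p_x$ norm uniformly in $\lambda$. The key observation is that $\langle \chi_n\xi, \eta^\lambda_x\rangle = \langle \xi, (\chi_n\eta^\lambda_x)\rangle$ and that $\chi_n\eta^\lambda_x$ is, up to a bounded constant depending on how $2^{-n}$ compares to $\lambda$, a rescaled test function of the type appearing in hypothesis \eqref{Eq:ReconsNearBdry}: when $2^{-n} \gtrsim \lambda$ it is essentially $\eta^\lambda_x$ itself (restricted to the region where $|x|_P\sim 2^{-n}$), and when $2^{-n}\lesssim\lambda$ one must further decompose $\eta^\lambda_x$ into pieces at scale $2^{-n}$ — this is the standard trick of writing $\eta^\lambda_x = \sum_j \psi^{2^{-m}}_{x_j}$ as a sum of order $(\lambda 2^{m})^d$ test functions at the finer scale $2^{-m}$ with $2^{-m}\sim 2^{-n}$, using the fact that the constraint $3\lambda' < |x|_P < c\lambda'$ in \eqref{Eq:ReconsNearBdry} with $\lambda' = 2^{-n}$ is met for those $x_j$ lying in the shell $S_n$. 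In either regime one extracts a factor $(2^{-n})^\sigma$ from \eqref{Eq:ReconsNearBdry} and combinatorial/scaling factors that are powers of $2^{-n}/\lambda$; summing the geometric series over $n$ (which converges precisely because $\sigma > -1+1/p \ge -1$, after the $L^p$ norm in $x$ over a shell of width $2^{-n}$ contributes an extra $(2^{-n})^{1/p}$, or rather because the exponents line up to give a convergent series) yields a bound by a constant times $\lambda^\sigma$ times the quantity in \eqref{Eq:ReconsNearBdry}. Care is needed with the $L^p_x$ norm: for fixed $\lambda$, the $x$'s with $|x|_P<C\lambda$ contributing at shell scale $2^{-n}$ form a set of measure controlled appropriately, and one applies the triangle inequality in $L^p_x$ over the (roughly $\log_2(1/(\lambda\sqrt a))$-many, but geometrically weighted) values of $n$.

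For uniqueness, suppose $\bar\xi_1, \bar\xi_2$ are two extensions both satisfying \eqref{Eq:ReconsNearBdry2}; their difference $\mu$ is supported on $P$ and satisfies $\|\sup_\eta \lambda^{-\sigma}|\langle\mu,\eta^\lambda_x\rangle|\un_{\{|x|_P<C\lambda\}}\|_{L^p_x} < \infty$ uniformly in $\lambda$. A distribution supported on the hyperfaces of $P$ is locally a finite sum of derivatives of surface measures; testing such an object against $\eta^\lambda_x$ with $x$ near $P$ produces, for the least regular component, a contribution of order $\lambda^{-1+1/p}$ in the $L^p_x$ norm (the Dirac-mass regularity threshold recalled after the statement), which blows up as $\lambda\to0$ unless that component vanishes — since $\sigma > -1+1/p$ strictly. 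Iterating over the (finitely many) levels of derivatives forces $\mu=0$. The main obstacle I expect is the bookkeeping in the $L^p_x$ estimate across scales, in particular getting the powers of $2^{-n}/\lambda$ to assemble into a convergent sum uniformly in $\lambda$ and in $L\ge 1$; the decomposition of a coarse test function into finer ones, and keeping track of how the boundary-distance constraint interacts with it, is where the real work lies, though it is a by-now standard manoeuvre in this literature.
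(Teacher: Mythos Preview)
Your existence argument is essentially the paper's: a dyadic decomposition in shells $\{|y|_P\sim 2^{-n}\}$ followed, at each scale $n$, by a partition of unity at spatial scale $2^{-n}$, and then a geometric sum over $n$ with ratio $(2^{-n}/\lambda)^{1-1/p+\sigma}$. One small correction: once you restrict to $|x|_P<C\lambda$, the regime $2^{-n}\gtrsim\lambda$ is empty---the support of $\eta^\lambda_x$ lies entirely in $\{|y|_P<(C+1)\lambda\}$, so $\chi_n\eta^\lambda_x=0$ whenever $2^{-n}\gg\lambda$. Only the fine-scale regime $2^{-n}\lesssim\lambda$ contributes, and that is exactly where your further decomposition into $\sim(\lambda 2^n)^{d-1}$ pieces centred in the shell (not $(\lambda 2^n)^d$; the shell is $(d-1)$-dimensional at each scale) is needed. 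The paper makes this explicit by building the partition of unity as $\chi(2^n|y|_P)\psi^n_y(\cdot)$ from the start.

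Your uniqueness argument takes a slightly different route. The paper avoids the structure theorem entirely: it shows directly that for any extension satisfying \eqref{Eq:ReconsNearBdry2}, $|\langle\xi,\varphi_{x_0}J_{n_0}\rangle|\lesssim 2^{-n_0(1-1/p+\sigma)}\to 0$ for any $x_0\in P$ and any smooth indicator $J_{n_0}$ of the $2^{-n_0}$-neighbourhood of $P$, by the same partition-of-unity-plus-H\"older manoeuvre as in the existence part. This is cleaner than invoking the local structure of distributions supported on $P$, and also avoids your imprecision: such a distribution is locally a finite sum of \emph{normal derivatives of distributions on $P$}, not of surface measures. Your approach can be made to work (the key fact is that no nonzero distribution supported on a hypersurface lies in $\cB^\sigma_{p,\infty}$ for $\sigma>-1+1/p$), but the direct computation is both shorter and, when unpacked, the same calculation.
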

With this lemma at hand, the proof of the reconstruction theorem goes as follows.
\begin{proof}[Proof of Theorem \ref{Th:Recons}]
We aim at applying Lemma \ref{Lemma:Recons}: to that end, it suffices to check that $\tilde{\cR} f$ satisfies \eqref{Eq:ReconsNearBdry}. Fix $c>0$. By the reconstruction bound \eqref{Eq:ReconsBound}, we have
\begin{align*}
\sup_{\lambda \in (0,1/\sqrt a)} \Big\| \sup_{\eta \in \ccB^r} \frac{\big|\langle \tilde{\cR} f - \Pi_x f(x) , \eta^\lambda_x \rangle\big|}{\lambda^{\sigma}} \un_{\{3\lambda < |x|_P < c \lambda\}}\Big\|_{L^p_x} \lesssim \$f\$\;.
\end{align*}
Furthermore, we have
\begin{align*}
\sup_{\lambda \in (0,1/\sqrt a)} \Big\| \sup_{\eta \in \ccB^r} \frac{\big|\langle \Pi_x f(x) , \eta^\lambda_x \rangle\big|}{\lambda^\sigma} \un_{\{3\lambda < |x|_P < c \lambda\}}\Big\|_{L^p_x} &\lesssim \sum_{\zeta \in \cA_{<\gamma}} \sup_{\lambda \in (0,1/\sqrt a)} \Big\| \sup_{\eta \in \ccB^r} \frac{\big|f_\zeta(x)\big|}{\lambda^{\sigma-\zeta}} \un_{\{3\lambda < |x|_P < c \lambda\}}\Big\|_{L^p_x}\\
&\lesssim \sum_{\zeta \in \cA_{<\gamma}} \sup_{\lambda \in (0,1/\sqrt a)} \Big\| \sup_{\eta \in \ccB^r} \frac{\big|f_\zeta(x)\big|}{|x|_P^{\sigma-\zeta}} \un_{\{3\lambda < |x|_P < c \lambda\}}\Big\|_{L^p_x}\\
&\lesssim \$f\$\;.
\end{align*}
Applying Lemma \ref{Lemma:Recons}, we obtain an extension of $\tilde{\cR} f$ that we denote $\cR f$: from \eqref{Eq:ReconsNearBdry2} and the regularity of $\tilde{\cR} f$ we deduce that $\cR f$ belongs to $\cB^{\bar\alpha}_{p,\infty}(\R^d)$ and that it satisfies \eqref{Eq:BoundReconsBdry}. The case of two models follows from similar arguments.
\end{proof}
\begin{proof}[Proof of Lemma \ref{Lemma:Recons}]
Let $\chi$ be a smooth, compactly supported function that defines a partition of unity: $\sum_{n\in\Z} \chi(x 2^n) = 1$ for all $x > 0$. The proof consists of two steps.\\
\textit{Uniqueness.} Let $\xi$ be an extension. Let us introduce a smooth indicator of the $2^{-n_0}$-neighbourhood of $P$ by setting $J_{n_0}(x) := 1 - \sum_{n\le n_0} \chi(|x|_P 2^n)$ for all $x\in\R^d$ and every $n_0\in\Z$. Suppose that
\begin{equation}
\big|\langle \xi,\varphi_{x_0} J_{n_0}\rangle\big| \lesssim 2^{-n_0(1-\frac1{p} + \sigma)}\;,
\end{equation}
uniformly over all $x_0\in P$, all $n_0\in\N$ large enough and all $\varphi\in\ccB^r$. Since $1-1/p+\sigma >0$, we deduce that $\xi$ is completely characterised by its evaluations away from $P$ so that the extension is necessarily unique. We are left with establishing the asserted bound. To that end, we consider a smooth function $\psi$ supported in the unit ball that defines a partition of unity:
$$ \sum_{x\in\Lambda_0} \psi_x(y) = 1\;,\quad y\in\R^d\;,$$
where $\psi_x(y) := \psi(y-x)$ and $\Lambda_n := \{(2^{-n} k_1,\ldots,2^{-n} k_d): k_i \in\Z\}$ for all $n\in\Z$. We also define $\psi^n_x(y) := \psi(2^n(y-x))$ and obtain a partition of unity at scale $2^{-n}$:
$$ \sum_{x\in\Lambda_n} \psi_x^n(y) = 1\;,\quad y\in\R^d\;.$$
We thus write
$$ \varphi_{x_0} J_{n_0} = \sum_{x\in\Lambda_{n_0}} \varphi_{x_0} J_{n_0} \psi_x^{n_0}\;,$$
and we observe that the only non-zero contributions in this series come from $x$'s such that $|x|_P \le C 2^{-n_0}$ for some constant $C>0$ independent of $n_0$. Furthermore, for any such $x$ and for any $y\in B(x,2^{-n_0})$, each function $\varphi_{x_0} J_{n_0} \psi_x^{n_0}$ can be rewritten as $2^{-n_0 d} \eta_y^{2^{-(n_0-1)}}$ for some $\eta \in \ccB^r$, up to a uniformly bounded multiplicative constant. Therefore,
\begin{align*}
\big| \langle \xi , \varphi_{x_0} J_{n_0}\rangle \big| &\lesssim \sum_{x\in\Lambda_{n_0}:|x|_P \le C 2^{-n_0}}\fint_{y\in B(x,2^{-n_0})}\sup_{\eta\in\ccB^r} 2^{-n_0 d} \big| \langle \xi , \eta_y^{2^{-(n_0-1)}} \rangle\big| dy\\
&\lesssim \Big(\sum_{x\in\Lambda_{n_0}:|x|_P \le C 2^{-n_0}} 2^{-n_0 (d-1)}\fint_{y\in B(x,2^{-n_0})}\sup_{\eta\in\ccB^r} 2^{-n_0 p} \big| \langle \xi , \eta_y^{2^{-(n_0-1)}} \rangle\big|^p dy\Big)^{\frac1{p}}\\
&\lesssim 2^{-n_0(1-\frac1{p})} \Big(\sum_{x\in\Lambda_{n_0}:|x|_P \le C 2^{-n_0}}\int_{y\in B(x,2^{-n_0})}\sup_{\eta\in\ccB^r}\big| \langle \xi , \eta_y^{2^{-(n_0-1)}} \rangle\big|^p dy\Big)^{\frac1{p}}\\
&\lesssim 2^{-n_0(1-\frac1{p}+\sigma)} \Big(\int_{y\in \R^d: |y|_P \le (C+1) 2^{-n_0}} \Big(\sup_{\eta\in\ccB^r}\frac{\big| \langle \xi , \eta_y^{2^{-(n_0-1)}} \rangle\big|}{2^{-n_0 \sigma}}\Big)^p dy\Big)^{\frac1{p}}\;,
\end{align*}
uniformly over all $n_0\ge 0$, all $x_0\in P$ and all $\varphi\in\ccB^r$. Since we assumed that $\xi$ is an extension, it satisfies \eqref{Eq:ReconsNearBdry2} and therefore the second factor on the r.h.s.~is bounded uniformly over all $n_0$ large enough. This concludes the proof of uniqueness.\\
\textit{Existence.} We define $\varphi^n_x(y) := \chi(2^{n}|y|_P) \psi^n_x(y)$ for all $n\in\Z$, all $x\in\Lambda_n$ and all $y\in\R^d$. We then have
$$ \sum_{n\in\Z} \sum_{x\in\Lambda_n} \varphi^n_x(y) = 1\;,\quad y\in\R^d\backslash P\;.$$
For any compactly supported, smooth function $\eta$, we set
$$ \langle \hat{\xi} , \eta \rangle := \sum_{n\in\Z} \sum_{y\in\Lambda_n} \langle \xi , \varphi^n_y \eta\rangle\;.$$
When the support of $\eta$ does not intersect $P$, one easily checks that this coincides with $\langle \xi,\eta\rangle$. If we show that \eqref{Eq:ReconsNearBdry2} holds with $\hat{\xi}$ instead of $\xi$, then standard arguments show that $\hat{\xi}$ is indeed a distribution that extends $\xi$.\\
Fix $C>0$ and consider $\eta^\lambda_x$ with $|x|_P < C \lambda$. Denote by $[b,b']$ the support of $\chi$. If $b$ is chosen large enough compared to $C$, then $\varphi^n_y \eta^\lambda_x$ vanishes whenever $2^{-n} > \lambda$. Moreover, $\varphi^n_y \eta^\lambda_x$ vanishes whenever $|x-y| > 2^{-n}+\lambda$ or $|y|_P \notin [(b-1) 2^{-n},(b'+1) 2^{-n}]$. Finally, when $2^{-n} \le \lambda$, for any $z\in B(y,2^{-n})$ the function $\varphi^n_y \eta^\lambda_x$ can be written as $2^{-nd} \lambda^{-d} \rho_z^{2^{-(n-1)}}$ for some $\rho\in\ccB^r$ up to a uniformly bounded multiplicative constant. Hence we have
\begin{align*}
&\sup_{\lambda \in (0,1/\sqrt a)} \Big\| \sup_{\eta \in \ccB^r} \frac{\big|\langle \xi, \eta^\lambda_x \rangle\big|}{\lambda^\sigma} \un_{\{|x|_P < C \lambda\}}\Big\|_{L^p_x}\\
&\lesssim \sup_{\lambda \in (0,1/\sqrt a)}\bigg\| \sum_{n: 2^{-n} \le \lambda}\!\!\!\!\!\!\! \sum_{\substack{y\in\Lambda_n:\\|y-x| \le \lambda+2^{-n}\\ (b-1) 2^{-n} < |y|_P < (b'+1) 2^{-n}}}\!\!\!\! \fint_{z\in B(y,2^{-n})} 2^{-nd} \lambda^{-d} \sup_{\rho \in \ccB^r} \frac{\big|\langle \xi, \rho_z^{2^{-(n-1)}} \rangle\big|}{\lambda^\sigma}dz \un_{\{|x|_P < C \lambda\}}\bigg\|_{L^p_x}\;.
\end{align*}
Notice that for every $x\in\R^d$, the sum over $y$ contains a number of non-zero terms which is at most of order $(\lambda 2^n)^{d-1}$. Using the triangle inequality on the sum over $n$ and Jensen's inequality on the sum over $y$ and the integral over $z$, we get
\begin{align*}
&\lesssim \sup_{\lambda \in (0,1/\sqrt a)} \sum_{n: 2^{-n} \le \lambda} \Big( \int_{x\in\R^d} \sum_{\substack{y\in\Lambda_n:\\|y-x| \le \lambda+2^{-n}\\ (b-1) 2^{-n} < |y|_P < (b'+1) 2^{-n}}} \fint_{z\in B(y,2^{-n})} 2^{-n(d-1)} \lambda^{-(d-1)}\\
&\qquad\qquad\qquad\times\Big(\sup_{\rho \in \ccB^r} 2^{-n} \lambda^{-1}\frac{\big|\langle \xi, \rho_z^{2^{-(n-1)}} \rangle\big|}{\lambda^\sigma} \un_{\{|x|_P < C \lambda\}}\Big)^p dz\,dx\Big)^{\frac1{p}}\\
&\lesssim \sup_{\lambda \in (0,1/\sqrt a)} \sum_{n: 2^{-n} \le \lambda} 2^{-n} \lambda^{-1}\Big( \int_{z\in\R^d:(b-2)2^{-n} < |z|_P < (b'+2) 2^{-n}} 2^{n} \lambda \Big(\sup_{\rho \in \ccB^r} \frac{\big|\langle \xi, \rho_z^{2^{-(n-1)}} \rangle\big|}{\lambda^\sigma} \Big)^p dz\Big)^{\frac1{p}}\\
&\lesssim \sup_{\lambda \in (0,1/\sqrt a)} \sum_{n: 2^{-n} \le \lambda} \big(2^{-n}\lambda^{-1}\big)^{(1-\frac1{p}+\sigma)} \Big\|\sup_{\rho \in \ccB^r} \frac{\big|\langle \xi, \rho_z^{2^{-(n-1)}} \rangle\big|}{2^{-n\sigma}} \un_{\{(b-2)2^{-n} < |z|_P < (b'+2) 2^{-n}\}} \Big\|_{L^p_z}\\
&\lesssim \sup_{n\ge n_a} \Big\|\sup_{\rho \in \ccB^r} \frac{\big|\langle \xi, \rho_z^{2^{-(n-1)}} \rangle\big|}{2^{-n\sigma}} \un_{\{(b-2)2^{-n} < |z|_P < (b'+2) 2^{-n}\}} \Big\|_{L^p_z}\;.
\end{align*}
Using the bound \eqref{Eq:ReconsNearBdry} and choosing $b$ large enough, we deduce that the latter quantity is finite thus concluding the proof.
\end{proof}

\section{Convolution and fixed point}\label{Sec:Convol}

\subsection{The Green's function}

For $a\ge 1$, let $P^{(a)}$ be the Green's function of $-\Delta + a$ on $\R^d$, that is, the solution\footnote{As usual, since this equation does not have a unique solution we further impose that $P^{(a)}$ remains bounded at infinity and this reduces the set of solutions to a single element.} of
$$ (-\Delta+a) P^{(a)} = \delta_0\;,$$
in the sense of distributions. It is well-known that
$$ P^{(a)}(x) =\begin{cases} \frac1{2\sqrt a}\, e^{-\sqrt a |x|}\;,&\quad d=1\;,\\
\frac1{2\pi}\,\Bess(\sqrt a |x|)\;,&\quad d=2\;,\\
\frac1{4\pi |x|}\, e^{-\sqrt a |x|}\;,&\quad d=3\;,
\end{cases}$$
where $\Bess$ is the modified Bessel function of the second kind of index $0$ (usually denoted $K_0$, see~\cite[Section 9.6]{AbraSteg}). Recall that for every $k\ge 0$ we have
$$ \partial^k \Bess(x) \sim (-1)^k \sqrt{\frac{\pi}{2x}} e^{-x}\;,\quad x\to\infty\;,$$
and for every $k\ge 1$
$$ \Bess(x) \sim -\ln x\;,\quad \partial^k \Bess(x) \sim \frac{(-1)^{k} (k-1)!}{x^k}\;,\quad x\to 0_+\;.$$
From now on, we let $n_a$ be the smallest integer such that $2^{-n_a} \le 1/\sqrt a$.
\begin{lemma}
There exists a decomposition $P_n^{(a)}, n\ge n_a$ and $P_-^{(a)}$ that satisfies Definition \ref{Def:Kernel} and such that for all $k\in\N^d$ such that $|k| < r$ we have
\begin{equation}\label{Eq:BndP}
\big| \partial^k P_n^{(a)}(x)\big| \lesssim 2^{-n(2-d-|k|)}\;,\quad \sup_{y\in B(x,1/\sqrt a)} \big| \partial^k P_-^{(a)}(y) \big| \lesssim 2^{-n_a(2-d-|k|)} e^{-\sqrt a (|x|-\frac1{\sqrt a})_+}\;,
\end{equation}
uniformly over all $n\ge n_a$, all $a\ge 1$ and all $x\in\R^d$.
\end{lemma}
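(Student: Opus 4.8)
The plan is to reduce everything to the unit scale $a=1$ by the exact dilation identity $P^{(a)}(x)=a^{d/2-1}\,P^{(1)}(\sqrt a\,x)$, to build a decomposition of $P^{(1)}$ following the recipe of~\cite[Section~5]{Hairer2014}, and to transport it back. Concretely, given a unit-scale decomposition $\{P^{(1)}_m\}_{m\ge 0},P^{(1)}_-$, I would set $P^{(a)}_n(x):=a^{d/2-1}P^{(1)}_{n-n_a}(\sqrt a\,x)$ — up to a bounded shift of the index, which is harmless since $\sqrt a\asymp 2^{n_a}$ — and $P^{(a)}_-(x):=a^{d/2-1}P^{(1)}_-(\sqrt a\,x)$. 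Then support radii $2^{-m}$ become $2^{-n}$, the vanishing-moment condition is dilation invariant, each derivative $\partial^k$ picks up a factor $a^{|k|/2}\asymp 2^{n_a|k|}$ and the prefactor contributes $a^{d/2-1}\asymp 2^{-n_a(2-d)}$, so a unit-scale bound $|\partial^k P^{(1)}_m|\lesssim 2^{m(d-2+|k|)}$ turns into $|\partial^k P^{(a)}_n|\lesssim 2^{-n(2-d-|k|)}$; all implicit constants remain uniform in $a\ge 1$ because the $a$-dependence is now entirely explicit.

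For the unit-scale construction I would first isolate the singular part. From the explicit formulae (Taylor expansion of $e^{-|x|}$ when $d=1,3$, the classical series of $\Bess=K_0$ when $d=2$) one sees that $P^{(1)}$ is smooth and decays exponentially together with all derivatives away from the origin, and that near the origin $P^{(1)}=T+W$, with $T$ a polynomial of degree $\le r$ and $W$ satisfying $|\partial^k W(x)|\lesssim |x|^{2-d-|k|}$ for $0<|x|\le 1$ and $|k|\le r$ — with the single exception $d=2$, $|k|=0$, where only $|W(x)|\lesssim 1+|\log|x||$ holds (the leading singularity being $-\tfrac1{2\pi}\log|x|$), the higher derivatives remaining of the stated size. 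Fixing a radial $\chi\in C_c^\infty(B(0,1))$ with $\chi\equiv 1$ on $B(0,1/2)$ and writing $\chi_m:=\chi(2^m\cdot)$, I would decompose $\chi_0 W$ telescopically by $\tilde P_m:=(\chi_m-\chi_{m+1})W$ ($m\ge 0$), so that $\sum_{m\ge 0}\tilde P_m=\chi_0 W$ (pointwise off the origin and in $L^1_{\rm loc}$), each $\tilde P_m$ is supported in $B(0,2^{-m})$, and the Leibniz rule combined with the bound on $W$ gives $|\partial^k\tilde P_m|\lesssim 2^{m(d-2+|k|)}$ (with an extra factor $1+m$ when $d=2$). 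The smooth part is then $P^{(1)}_-:=P^{(1)}-\chi_0 W=(1-\chi_0)P^{(1)}+\chi_0 T$, manifestly smooth and exponentially decaying.

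It remains to enforce condition~4 of Definition~\ref{Def:Kernel}. Choosing $\{\theta_k\}_{|k|\le r}\subset C_c^\infty(B(0,1))$ biorthogonal to monomials, $\int\theta_k(y)y^{k'}dy=\delta_{k,k'}$, and setting $\theta^m_k(x):=2^{md}\theta_k(2^m x)$, $M^k_m:=\int\tilde P_m(x)x^k dx$, $c^k_m:=\sum_{\ell\ge m}M^k_\ell$ and $R_m:=\sum_{|k|\le r}c^k_m\,2^{m|k|}\theta^m_k$, one has $|c^k_m|\lesssim 2^{-m(2+|k|)}$ (the series converging absolutely, since $\tilde P_m$ lives in $B(0,2^{-m})$ where $|W|\lesssim|x|^{2-d}$ — again up to a logarithm when $d=2$), hence $|\partial^k R_m|\lesssim 2^{m(d-2+|k|)}$. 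The final pieces are
$$ P^{(1)}_m:=\tilde P_m+R_{m+1}-R_m\;. $$
They are supported in $B(0,2^{-m})$, satisfy the same derivative bound, and — using $\int\theta^m_k(x)x^{k'}dx=2^{-m|k'|}\delta_{k,k'}$ together with $c^{k'}_m-c^{k'}_{m+1}=M^{k'}_m$ — have $\int P^{(1)}_m(x)x^{k'}dx=0$ for all $|k'|\le r$. Since $\sum_{m=0}^N P^{(1)}_m=\big(\sum_{m=0}^N\tilde P_m\big)+R_{N+1}-R_0\to\chi_0 W-R_0$ in $\ccD'(\R^d)$, I would finally replace $P^{(1)}_-$ by $P^{(1)}-\chi_0 W+R_0$, still smooth and still exponentially decaying. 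Rescaling as in the first step yields the first inequality of~\eqref{Eq:BndP}; the second one follows by reading the exponential tail of $P^{(1)}$ off the explicit formula, using that $|y|\ge|x|-1/\sqrt a$ on $B(x,1/\sqrt a)$, so that the substitution $z=\sqrt a\,y$ turns $e^{-|z|}$ into $e^{-\sqrt a(|x|-1/\sqrt a)_+}$ up to a constant, while the prefactor $a^{(d-2+|k|)/2}\asymp 2^{-n_a(2-d-|k|)}$ supplies the size.

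The step I expect to be the main obstacle is the borderline dimension $d=2$: the leading singularity $-\tfrac1{2\pi}\log|x|$ behaves like a function of homogeneity $2-d=0$ \emph{with a logarithm}, so the pieces above obey~\eqref{Eq:BndP} only with an extra factor $1+(n-n_a)$ rather than a genuine constant. I expect this to be harmless downstream — it effectively lowers the order $2$ of the kernel to $2-\eps$ for an arbitrarily small $\eps>0$, which is absorbed by the free parameter $\kappa\in(0,1/2)$ entering $|\Xi|=-d/2-\kappa$ — but if the clean bound is wanted it can be recovered by building the $P^{(1)}_m$ out of Littlewood–Paley blocks of $\chi_0 W$, which have $L^\infty$-norm $\lesssim 1$ by Bernstein's inequality and automatically annihilate all polynomials, truncated to $B(0,C2^{-m})$ with the negligible tails reassigned across dyadic scales. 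The remaining verifications (the Leibniz estimates, the absolute convergence of the $c^k_m$, and the uniformity of every constant over $a\ge 1$) are routine, the last point being precisely what the scaling normalisation of the first step secures.
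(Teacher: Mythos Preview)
Your scaling reduction $P^{(a)}(x)=a^{d/2-1}P^{(1)}(\sqrt a\,x)$ is a clean and correct way to make the $a$-uniformity automatic, and for $d=1,3$ your construction is complete and essentially standard. The paper instead builds the decomposition directly for each $a$, using a dyadic radial partition $\varphi(2^n|x|)$ of $P^{(a)}$ and the same polynomial-killing correction you describe; both routes are fine there.

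The substantive difference is in $d=2$. You correctly flag that multiplying $K_0$ by radial cutoffs $\chi_m-\chi_{m+1}$ produces pieces of size $1+m$ rather than $O(1)$ at level $k=0$, so your direct construction does \emph{not} give the stated bound --- only a bound with a harmless log factor. Your proposed Littlewood--Paley repair would work in principle but is left as a sketch (truncating LP blocks to dyadic balls and redistributing tails is not entirely trivial). The paper avoids this altogether by a neat device you might add to your toolkit: rather than cutting $K_0$ itself, it writes $K_0(\sqrt a|x|)=-\int_{|x|}^\infty\sqrt a\,K_0'(\sqrt a\,r)\,dr$ and inserts the dyadic partition under the integral,
\[
\bar P^{(a)}_n(x):=-\frac1{2\pi}\int_{|x|}^\infty\sqrt a\,K_0'(\sqrt a\,r)\,\varphi(2^nr)\,dr\;.
\]
Since $K_0'(r)\sim -1/r$ near $0$, the integrand has size $2^n$ on an interval of length $2^{-n}$, so $\bar P^{(a)}_n$ is genuinely $O(1)$ with no logarithm; derivatives in $x$ hit only the lower limit and produce the right powers of $2^n$. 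This gives the clean bound \eqref{Eq:BndP} directly. If you want to keep your scaling framework, you can apply the same trick to $P^{(1)}$ at unit scale.
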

\noindent In Subsection \ref{Sec:Models}, we will define a canonical model $(\Pi_\epsilon^{(a)},\Gamma_\epsilon^{(a)})$ based on the kernel $P^{(a)}$ and some regularised white noise $\xi_\epsilon$.
\begin{proof}
Let $\varphi:[0,\infty)\to[0,1]$ be a smooth function, supported in $[1/4,1]$, that defines a partition of unity in the following sense:
$$ \sum_{n\in \Z} \varphi(2^n x) = 1\;,\quad x\in (0,\infty)\;.$$
Our construction of the decomposition of $P^{(a)}$ into a sum of smooth functions depends on the dimension. For $d=3$, we set for every $n\ge n_a$
$$ \bar{P}_n^{(a)}(x) := \varphi(2^n |x|) P^{(a)}(x)\;,\quad \bar{P}_-^{(a)}(x) := \sum_{n < n_a} \varphi(2^n |x|) P^{(a)}(x)\;,\quad x\in \R^3\;.$$
We now modify these functions in order to get property 4. of Definition \ref{Def:Kernel}. Set for every $k\in \N^d$ and $n\ge n_a$
$$ I_{k,n}^{(a)} := \sum_{\ell \ge n} \int x^k \bar{P}_\ell^{(a)}(x) dx\;,$$
and observe that $|I_{k,n}^{(a)}| \lesssim 2^{-n(2+|k|)}$ uniformly over all $a\ge 1$ and all $n\ge n_a$. Consider a smooth function $\eta_k$ supported in $B(0, 1/2) \subset \R^d$ and such that $\int x^\ell \eta_k(x) dx = \delta_{k,\ell}$ for every $\ell\in \N^d$ such that $|\ell| < r$. Set also $\eta_{k,n}(\cdot) := 2^{n(d + |k|)} \eta_k(2^n\cdot)$. Then, we set for every $n \ge n_a$
$$ P_{n}^{(a)} :=  \bar{P}_{n}^{(a)} + \sum_{k:|k| < r} \Big(I_{k,n+1}^{(a)} \eta_{k,n+1} - I_{k,n}^{(a)} \eta_{k,n}\Big)\;,$$
and
$$ P_-^{(a)} := \bar{P}_-^{(a)} + \sum_{k:|k| < r} I_{k,n_a}^{(a)} \eta_{k,n_a}\;.$$
It is simple to check that the requirements of Definition \ref{Def:Kernel} are satisfied, together with the bounds \eqref{Eq:BndP}.\\
For $d=2$, we set for every $n\ge n_a$
$$ \bar{P}_n^{(a)}(x) := - \frac1{2\pi} \int_{|x|}^\infty \sqrt a\, \Bess'(\sqrt a r) \varphi(2^n r) dr \;,\quad x\in \R^2\;,$$
and
$$ \bar{P}_-^{(a)}(x) := \frac1{2\pi}\Bess(\sqrt a |x|) - \sum_{n\ge n_a} \bar{P}_n^{(a)}(x)\;,\quad x\in \R^2\;.$$
Then, we repeat the same procedure as for $d=3$ to annihilate polynomials. Using the asymptotic behaviour of the modified Bessel function recalled before the lemma, we deduce that the asserted bounds hold.\\
We now consider the case $d=1$. Let $\psi:\R\to [0,1]$ be a smooth function, supported in the unit ball and such that $\psi(x) = 1$ whenever $|x| \le 1/2$. We then set
$$\bar{P}_{n_a}^{(a)}(x) := \psi(2^{n_a} |x|) P^{(a)}(x)\;,\quad x\in \R\;,$$
as well as $\bar{P}_n^{(a)} := 0$ for all $n > n_a$, and $\bar{P}_-^{(a)} := P^{(a)} - \bar{P}_{n_a}^{(a)}$. We then follow the same procedure as for $d=3$ in order to annihilate polynomials.
\end{proof}

Let us now introduce a convenient representation of the Green's function $K^{(a)}$ associated to $-\Delta + a$ on $(-L,L)^d$ with Dirichlet b.c. The reflection principle allows to construct $K^{(a)}$ as a series of shifted versions of $P^{(a)}$. More precisely, if for all $m\in\Z^d\backslash\{0\}$ we set $\epsilon_m := (-1)^{|m|}$ then we have
\begin{equation}\label{Eq:KaPa}
K^{(a)}(x,y) = P^{(a)}(x,y) + \sum_{m\in\Z^d\backslash\{0\}} \epsilon_m P^{(a)}(x,\pi_{m,L}(y))\;,\quad x,y \in \R^d\;,
\end{equation}
where $\pi_{m,L}$ is the bijection from $\R^d$ to $\R^d$ defined as follows:
$$ \big(\pi_{m,L}(y)\big)_i = (-1)^{m_i} y_i + 2L m_i\;.$$
Then, for any map $u$ that vanishes outside $[-L,L]^d$, the function
$$ f(x) := \int_{\R^d} K^{(a)}(x,y)u(y) dy\;,\quad x\in(-L,L)^d\;,$$
is the solution of $(-\Delta + a) f = u$ on $(-L,L)^d$ endowed with Dirichlet b.c.

\medskip

It is then natural to introduce
$$ K^{(a)}_{-}(x,y) := P^{(a)}_{-}(x,y) + \sum_{m\in\Z^d\backslash\{0\}} \epsilon_m P^{(a)}_-(x,\pi_{m,L}(y))\;,$$
and for every $n\ge n_a$
$$ Z^{(a)}_n(x,y) := \sum_{m\in\Z^d\backslash\{0\}} \epsilon_m P^{(a)}_n(x,\pi_{m,L}(y))\;.$$
Notice that $K^{(a)}_-$ is a smooth function. We set $Z^{(a)}_+ := \sum_{n\ge n_a} Z^{(a)}_n$. We thus come to the following decomposition:
\begin{equation}\label{Eq:DecompKa}
K^{(a)}(x,y) = P^{(a)}_+(x,y) + Z^{(a)}_+(x,y) + K^{(a)}_-(x,y)\;,\quad x,y \in \R^d\;.
\end{equation}
It will also be convenient to define $K^{(a)}_n(x,y) = P^{(a)}_n(x,y) + Z^{(a)}_n(x,y)$ together with $K^{(a)}_+ = \sum_{n\ge n_a} K^{(a)}_n$.\\

Since we will convolve this kernel with distributions that vanish outside $[-L,L]^d$, it is convenient to make this kernel compact in the $y$ variable for all $x\in [-L,L]^d$. To that end, we let $\chi_n$ be a smooth function that equals $1$ on $[-L-2^{-n},L+2^{-n}]^d$ and $0$ on the complement of $[-L-2\cdot 2^{-n},L+2\cdot 2^{-n}]^d$, and we replace $K_n^{(a)}(x,y)$ by $K_n^{(a)}(x,y) \chi_n(y)$ and $K_-^{(a)}(x,y)$ by $K_-^{(a)}(x,y) \chi_{n_a}(y)$.\\
For all $x\in (-L,L)^d$ we then have $P^{(a)}_n(x,y)\chi_n(y) = P^{(a)}_n(x,y)$, while
$$ Z^{(a)}_n(x,y)\chi_n(y) = \sum_{m\in\Z^d\backslash\{0\}:\sup_i |m_i| \le 1} \epsilon_m P^{(a)}_n(x,\pi_{m,L}(y))\;.$$
Note also that for all $x\in [-L,L]^d$, $Z^{(a)}_n(x,\cdot)\chi_n(\cdot)$ is null except if $|x|_P \le 3\cdot 2^{-n}$. From now on, we assume that all the above kernels are implicitly multiplied by this cutoff function.

\begin{remark}[Periodic]
In the periodic case, it suffices to set $K^{(a)}(x,y) := P^{(a)}(x-y)$. To deal with unified notations, one can take $\epsilon_m = 0$ for all $m\in\Z^d\backslash\{0\}$ so that the identity \eqref{Eq:KaPa} still holds.
\end{remark}
\begin{remark}
In the Neumann case, the above construction still applies with $\epsilon_m = 1$.
\end{remark}


The next lemma ``quantifies'' the effect of the Dirichlet boundary conditions on the kernel $K^{(a)}_+$. A similar statement holds for $K^{(a)}_-$.

\begin{lemma}\label{Lemma:CancelDirichlet}
There exists $c>0$ such that for all $a,L\ge 1$, all $x\in [-L,L]^d$ and all $n\ge n_a$ such that $3\cdot 2^{-n} \ge |x|_P$, there exists a smooth function $\varphi\in\ccB^r$ such that $K_n^{(a)}(x,\cdot)$ can be written as $2^{-n} |x|_P \varphi_x^{c2^{-n}}(\cdot)$ up to a multiplicative constant which is bounded uniformly over all the parameters.
\end{lemma}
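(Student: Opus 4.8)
The plan is to read off the stated gain from the cancellations built into the reflection-principle decomposition \eqref{Eq:KaPa}. Fix $x\in[-L,L]^d$ and $n\ge n_a$ with $3\cdot 2^{-n}\ge|x|_P$. For $L\ge 1$ the shifts $\pi_{m,L}$ with some $|m_i|\ge 2$ move $y$ too far from $[-L,L]^d$ to contribute, and, since each $\pi_{m,L}$ is an isometry and $P^{(a)}_n(x,\cdot)$ is supported in $B(x,2^{-n})$, a shift $\pi_{m,L}$ with $m\in\{-1,0,1\}^d$ contributes only if $\dist\big(x,\{z_i=\mathrm{sgn}(m_i)L\}\big)\le 3\cdot 2^{-n}$ for every $i$ with $m_i\neq 0$. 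Leaving aside the degenerate situation where $2^{-n}$ is comparable to $L$ (treated at the end), for each $i$ at most one of $\{z_i=\pm L\}$ is this close to $x$; let $S$ be the set of pairs $(i,s_i)$ with $\dist(x,\{z_i=s_iL\})\le 3\cdot 2^{-n}$, so that the contributing shifts are the $\pi_{m_T,L}$ with $m_T:=\sum_{(i,s_i)\in T}s_i e_i$, $T\subseteq S$, and $\epsilon_{m_T}=(-1)^{|T|}$. Since $P^{(a)}$ is radial and only the moments $I_{k,n}^{(a)}$ with all $k_i$ even are non-zero, the correction functions $\eta_k$ in the decomposition of $P^{(a)}$ may be chosen invariant under all coordinate reflections; hence so is each $P^{(a)}_n$, and a short computation with the orthogonal matrices implementing these reflections gives $P^{(a)}_n(x-\pi_{m_T,L}(y))=P^{(a)}_n(x^{*T}-y)$, where $x^{*T}$ is $x$ reflected across the affine hyperplanes $\{z_i=s_iL\}$, $(i,s_i)\in T$. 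Writing $Q(x,y):=\sum_{T\subseteq S}(-1)^{|T|}P^{(a)}_n(x^{*T}-y)$, one thus has, with $\chi_n$ the implicit smooth cutoff, $K^{(a)}_n(x,\cdot)=\chi_n(\cdot)\,Q(x,\cdot)$.

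Next I would extract the cancellation from $Q$. If $S=\emptyset$ then $Q(x,y)=P^{(a)}_n(x-y)$, and $S=\emptyset$ together with $|x|_P\le 3\cdot 2^{-n}$ forces $|x|_P=1/\sqrt a\ge 2^{-n}$, so \eqref{Eq:BndP} already gives $|\partial_y^k Q(x,y)|\lesssim 2^{n(d-2+|k|)}\le|x|_P\,2^{n(d-1+|k|)}$. If $S\neq\emptyset$, the face of the cube nearest to $x$ lies in $S$; choosing $(j_0,s_0)\in S$ with $\dist(x,\{z_{j_0}=s_0L\})=\dist(x,P)$, writing $S=\{(j_0,s_0)\}\sqcup S'$, and pairing each $T'\subseteq S'$ with $T'\cup\{(j_0,s_0)\}$ gives
$$ Q(x,y)=\sum_{T'\subseteq S'}(-1)^{|T'|}\Big(P^{(a)}_n(x^{*T'}-y)-P^{(a)}_n(\rho_{j_0}(x^{*T'})-y)\Big)\;, $$
with $\rho_{j_0}$ the reflection across $\{z_{j_0}=s_0L\}$; since $j_0\notin T'$, the two arguments differ only in coordinate $j_0$ and by $2\dist(x,\{z_{j_0}=s_0L\})=2\dist(x,P)\lesssim|x|_P$ -- indeed either $|x|_P=\dist(x,P)$, or $|x|_P=1/\sqrt a\ge 2^{-n}$ while $\dist(x,P)\le 3\cdot 2^{-n}\le 3|x|_P$. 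A first-order Taylor estimate and \eqref{Eq:BndP} then yield $|\partial_y^k Q(x,y)|\lesssim|x|_P\,2^{n(d-1+|k|)}$ for $|k|<r$, uniformly over $a,L\ge 1$, $n\ge n_a$ and $y$, the constant absorbing the at most $2^{d-1}$ summands; and since each $x^{*T}$ is within $\lesssim 2^{-n}$ of $x$, the support of $Q(x,\cdot)$ lies in $B(x,c_0 2^{-n})$ for a dimensional constant $c_0$.

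Finally I would normalise. Rescaling as a function of $w$ via $y=x+c_02^{-n}w$, the bounds just obtained show that $(2^{-n}|x|_P)^{-1}2^{-nd}\,Q(x,x+c_02^{-n}\cdot)$ has $\cC^r$-norm $\lesssim 1$ and is supported in $B(0,1)$; multiplying by the rescaled cutoff $\chi_n(x+c_02^{-n}\cdot)$, which likewise has $\cC^r$-norm $\lesssim 1$ (as $c_0 2^{-n}\cdot 2^n\lesssim 1$) and is $\le 1$, and using $K^{(a)}_n(x,\cdot)=\chi_n Q(x,\cdot)$, one obtains a function $\varphi\in\ccB^r$ and a constant $C$ bounded uniformly in all parameters such that $K^{(a)}_n(x,\cdot)=C\,2^{-n}|x|_P\,\varphi^{c_02^{-n}}_x(\cdot)$, which is the claim (with $c=c_0$). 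The excluded degenerate case $2^{-n}\asymp L$ -- occurring only for finitely many $n$ close to $n_a$, and only when $a$ is close to $1$ -- is handled directly: there $2^{-n}$, $|x|_P$ and $L$ are all comparable, only boundedly many shifts contribute, and the bound follows from the plain estimates \eqref{Eq:BndP} without any cancellation. I expect the part needing the most care to be the combinatorial bookkeeping of which shifts $\pi_{m,L}$ contribute near the lower-dimensional faces of the cube, together with checking that the nearest contributing face controls $|x|_P$ up to a universal constant in every regime of $a$, $L$ and $n$.
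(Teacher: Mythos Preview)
Your proof is correct and follows the same approach as the paper: both use the evenness of $P_n^{(a)}$ under coordinate reflections to pair terms in the reflection-principle sum and then Taylor-expand the resulting differences to extract the factor $|x|_P$. The paper only spells out the single-reflection case and waves at the general corner/edge situation, whereas you carry out the inclusion--exclusion over all nearby faces explicitly and are also more careful about the distinction $|x|_P=\tfrac{1}{\sqrt a}\wedge\dist(x,P)$ and about the need to choose the correction functions $\eta_k$ even (a point the paper uses with the phrase ``By symmetry'' but never justifies).
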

Note that in general, one has a prefactor $2^{-2n}$ in front of $\varphi_x^{c2^{-n}}(\cdot)$. The lemma shows that near the boundary, one can trade a factor $2^{-n}$ off for a factor $|x|_P$.
\begin{proof}
Let $x\in [-L,L]^d$ and $n\ge n_a$. Assume that $x$ is at distance larger than $3\cdot 2^{-n}$ from the hyperplanes $x_i =\pm L$ for $i>1$ and lies at distance less than $3\cdot 2^{-n}$ from $x_1=L$. We let $x_P$ be its projection on the hyperplane $x_1 = L$. If we denote by $e_i$ the element of $\Z^d$ whose coordinates are all zero except the $i$-th which equals $1$, then $\pi_{e_1,L}$ is the reflection with respect to the hyperplane $x_1=L$. Recall that we only ever consider $y$'s in $[-L-2\cdot 2^{-n},L+2\cdot 2^{-n}]^d$ due to our cutoff. For all such $y$'s, $\pi_{m,L}(y)$ falls at distance larger than $2^{-n}$ from $x$ except if $m=e_1$. Consequently, $P_n^{(a)}(x,\pi_{m,L}(y))$ vanishes except if $m=e_1$. By symmetry we thus have
\begin{align*}
K_n^{(a)}(x,y) &= P_n^{(a)}(x,y) - P_n^{(a)}(x,\pi_{e_1,L}(y))\\
&= \big(P_n^{(a)}(x,y)-P_n^{(a)}(x_P,y)\big) + \big(P_n^{(a)}(x_P,\pi_{e_1,L}(y))-P_n^{(a)}(x,\pi_{e_1,L}(y))\big)\\
&= \int_{z\in [x,x_P]} \big(\partial^{e_1}_1 P_n^{(a)}(z,y) + \partial^{e_1}_1 P_n^{(a)}(z,\pi_{e_1,L}(y))\big) dz\;.
\end{align*}
Since $|x-x_P|=|x|_P$, there exists some function $\varphi \in \ccB^r$ such that the r.h.s.~can be written as $2^{-n} |x|_P \varphi_x^{5\cdot 2^{-n}}(y)$ up to a multiplicative constant which is bounded uniformly over all the parameters.\\
The general case can be dealt with the same arguments: one simply has to take into account more than a single reflection.
\end{proof}

\subsection{Convolution with the Green's function}

The goal of this subsection is to define the abstract convolution operator $\cKa$ as the lift of the operator $f\mapsto K^{(a)}*f$ at the level of modelled distributions. To that end, we distinguish the singular part $K^{(a)}_+$ from the smooth part $K^{(a)}_-$ of the kernel and define the associated operators $\cKa_+$ and $\cKa_-$ separately. Regarding the former, we set for all $x\in (-L,L)^d$
\begin{equation}\label{Eq:cKa+}\begin{split}
\cKa_+ f(x) &:= \cI(f(x)) + \sum_{\zeta \in \cA_{<\gamma}} \sum_{|k| < \zeta + 2} \frac{X^k}{k!} \big\langle \Pi_x \cQ_\zeta f(x) , \partial^k_1 P_+^{(a)}(x,\cdot)\big\rangle\\
&+ \sum_{|k| < \gamma +2} \frac{X^k}{k!} \big\langle \cR f - \Pi_x f(x) , \partial^k_1 P_+^{(a)}(x,\cdot)\big\rangle\\
&+ \sum_{|k| < \gamma +2} \frac{X^k}{k!} \big\langle \cR f , \partial^k_1 Z_+^{(a)}(x,\cdot)\big\rangle\;.
\end{split}\end{equation}
The expression of the operator $\cKa_-$ is much simpler since the associated kernel is not singular:
$$ \cKa_- f(x) := \sum_{|k| < \gamma'} \frac{X^k}{k!} \langle \cR f , \partial^k_1 K_-^{(a)}(x,\cdot)\rangle\;,\quad x\in (-L,L)^d\;.$$
Note that outside $(-L,L)^d$ these two quantities are set to $0$. Then we set $\cKa := \cKa_+ + \cKa_-$.

\begin{theorem}[Abstract convolution]\label{Th:ConvolAbs}
Fix $\gamma> 0$, $p\in [1,\infty]$ and $\sigma \le \gamma$ such that $\sigma > -1(1-1/p)$. Let $\alpha := \min \cA_{<\gamma}$ and set $\gamma'=\gamma+2$ and $\sigma'=(\sigma +2)\wedge 1$. Assume that $\gamma,\sigma, \alpha \notin \Z$. The operator $\cKa$ is continuous from $\cD^{\gamma,\sigma}_{p,\infty}$ into $\cD^{\gamma',\sigma'}_{p,\infty}$ and satisfies $\cR \cKa f = K^{(a)} * \cR f$. Furthermore, we have the bound
\begin{equation}\label{Eq:Convola}
\$ \cKa f\$_{\gamma',\sigma'} \lesssim \$ f \$_{\gamma,\sigma} \$\Pi\$ (1+\$\Gamma\$)\;,
\end{equation}
uniformly over all $a,L \ge 1$. In the case where we are given another admissible model, we have the bound
\begin{equation}\label{Eq:Convola2}
\$ \cKa f, \cKa \bar f\$_{\gamma',\sigma'} \lesssim C(\Pi,\bar\Pi,\Gamma,\bar\Gamma,f,\bar f)\;,
\end{equation}
uniformly over all admissible models $(\Pi,\Gamma)$, $(\bar\Pi,\bar\Gamma)$, all $f$ and $\bar f$ in $\cD^{\gamma,\sigma}_{p,\infty}$ and $\bar{\cD}^{\gamma,\sigma}_{p,\infty}$ and all $a,L\ge 1$. Here the constant $C(\Pi,\bar\Pi,\Gamma,\bar\Gamma,f,\bar f)$ is given by
$$ \$\Pi\$ (1+\$\Gamma\$)\$f;\bar f\$_{\gamma,\sigma} + \big(\$\Pi-\bar\Pi\$(1+\$\bar\Gamma\$) + \$\bar\Pi\$\$\Gamma-\bar\Gamma\$\big) \$\bar f\$_{\gamma,\sigma}\;.$$
\end{theorem}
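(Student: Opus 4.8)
The plan is to adapt the multi-level Schauder estimate of~\cite[Thm~5.12]{Hairer2014} to the $L^p$-based spaces of modelled distributions of~\cite{Recons}, to the weights near $P$ in the spirit of~\cite{Mate}, and to the extra kernel $Z^{(a)}_+$ produced by the reflection principle~\eqref{Eq:KaPa}; throughout one must keep track of the cutoff scale $2^{-n_a}\sim 1/\sqrt a$. The starting point is the decomposition, valid for $x\in(-L,L)^d$, $\cKa_+ f(x) = \cI(f(x)) + \cJ^{(a)}f(x) + \cN^{(a)}f(x) + \cZ^{(a)}_+ f(x)$, with $\cJ^{(a)}(x)\tau := \sum_{|k|<|\tau|+2}\tfrac{X^k}{k!}\langle\Pi_x\tau,\partial^k_1 P^{(a)}_+(x,\cdot)\rangle$ the polynomial correction inherent to abstract integration, $\cN^{(a)}f(x) := \sum_{|k|<\gamma+2}\tfrac{X^k}{k!}\langle\cR f-\Pi_x f(x),\partial^k_1 P^{(a)}_+(x,\cdot)\rangle$ the reconstruction defect, and $\cZ^{(a)}_+ f(x) := \sum_{|k|<\gamma+2}\tfrac{X^k}{k!}\langle\cR f,\partial^k_1 Z^{(a)}_+(x,\cdot)\rangle$. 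Together with $\cKa_-$ this yields five contributions, and for each one bounds separately the two pieces of $\$\cdot\$_{\gamma',\sigma'}$, namely the $L^p_x$ norm of $|\cdot|_\zeta/|x|_P^{\sigma'-\zeta}$ and the $L^p_x$ norm of the increment $|\cdot(x+h)-\Gamma_{x+h,x}\cdot(x)|_\zeta/\big(|x|_P^{\sigma'-\gamma'}|h|^{\gamma'-\zeta}\big)$ over $\{|x|_P>3|h|\}$.

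The term $\cI(f(x))$ is the simplest: its local bound is immediate since $|\cI(f(x))|_{\zeta+2}=|f(x)|_\zeta$ and $\sigma'-\zeta-2\le\sigma-\zeta$, while its increment is governed by the identity $\Gamma_{x+h,x}\cI\tau=\cI(\Gamma_{x+h,x}\tau)+\sum_k\tfrac{X^k}{k!}(\cdots)$ of~\cite[Sec.~5]{Hairer2014}, whose polynomial remainder is cancelled, by construction, against the increment of $\cJ^{(a)}$ up to lower-order pieces — this algebraic cancellation I would import directly. To estimate $\cJ^{(a)}$ (and, the same way, the easy scales of $\cN^{(a)}$ and all of $\cKa_-$) I decompose $P^{(a)}_+=\sum_{n\ge n_a}P^{(a)}_n$ and use~\eqref{Eq:BndP}: up to a uniform constant, $\partial^k_1 P^{(a)}_n(x,\cdot)=2^{n(|k|-2)}\eta^{2^{-n}}_x$ for some $\eta\in\ccB^r$, so $|\langle\Pi_x\tau,\partial^k_1 P^{(a)}_n(x,\cdot)\rangle|\lesssim\$\Pi\$\,\|\tau\|\,2^{n(|k|-2-|\tau|)}$. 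As $|k|<|\tau|+2$, the series over $n\ge n_a$ converges and is dominated by its $n=n_a$ term, which is exactly where the factor $2^{-n_a}\sim 1/\sqrt a$ is generated; one then checks that this power of $a$ is matched by the $|x|_P$-weights in the target norm, so that~\eqref{Eq:Convola} comes out uniform over $a\ge1$. For $\cN^{(a)}$ one must distinguish the scales $2^{-n}\le|x|_P/3$, where the reconstruction bound~\eqref{Eq:ReconsBound} applies directly, from the finitely many scales $2^{-n}>|x|_P/3$, where one instead invokes the boundary bound~\eqref{Eq:BoundReconsBdry} of Theorem~\ref{Th:Recons} — it is here that the $\lambda^\sigma$-type control genuinely matters.

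The term demanding real care is $\cZ^{(a)}_+ f$, the correction forced by the Dirichlet condition. Writing $Z^{(a)}_n=K^{(a)}_n-P^{(a)}_n$, the $P^{(a)}_n$-part is absorbed into the preceding estimates, whereas for $K^{(a)}_n$ I invoke Lemma~\ref{Lemma:CancelDirichlet}: on the only range where $Z^{(a)}_n(x,\cdot)$ does not vanish, $3\cdot2^{-n}\ge|x|_P$, one may write $K^{(a)}_n(x,\cdot)=2^{-n}|x|_P\,\varphi^{c2^{-n}}_x$ for some $\varphi\in\ccB^r$, trading a power $2^{-n}$ for the factor $|x|_P$. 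Pairing with $\cR f$ through~\eqref{Eq:BoundReconsBdry} and summing over the finitely many relevant scales, this extra $|x|_P$ is precisely what compensates the otherwise-insufficient summability of the boundary weights in the $L^p$ (as opposed to $L^\infty$) setting and places $\cZ^{(a)}_+ f$ in $\cD^{\gamma',\sigma'}_{p,\infty}$ with $\sigma'=(\sigma+2)\wedge1$; the increment bound near $P$ is handled in the same way, separating the regimes $|h|\ll|x|_P$ and $|h|\asymp|x|_P$. I expect this interplay between the boundary weights, the $L^p$ summability and the cancellation of Lemma~\ref{Lemma:CancelDirichlet} to be the \emph{main obstacle}; everything else is a careful but routine transcription of the classical estimate.

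It remains to pin down the reconstruction and to treat two models. The identity $\cR\cKa f=K^{(a)}*\cR f$ follows from the uniqueness statement of Theorem~\ref{Th:Recons}: expanding $\Pi_x(\cKa f)(x)$ through the admissible-model formulae for $\Pi_x\cI\tau$ and for polynomials, the difference $\langle K^{(a)}*\cR f-\Pi_x(\cKa f)(x),\varphi^\lambda_x\rangle$ telescopes and the surviving terms are bounded, with the correct boundary weight, by a multiple of $\$f\$_{\gamma,\sigma}\,\$\Pi\$(1+\$\Gamma\$)\,\lambda^{\gamma'}$ thanks to~\eqref{Eq:ReconsBound} and~\eqref{Eq:BoundReconsBdry}; hence $K^{(a)}*\cR f$ must be the reconstruction of $\cKa f$. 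Finally, the two-model bound~\eqref{Eq:Convola2} is obtained by rerunning every step above on the relevant differences, now calling on~\eqref{Eq:ReconsBound2} and the second part of Theorem~\ref{Th:Recons}, and systematically splitting products of a model factor and a modelled-distribution factor via $a_1b_1-a_2b_2=(a_1-a_2)b_1+a_2(b_1-b_2)$; no new phenomenon appears, only the bookkeeping of the constant displayed in the statement.
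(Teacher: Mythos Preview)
Your strategy matches the paper's: reduce to the integer levels, decompose over dyadic scales $n\ge n_a$, split according to $|x|_P\lessgtr 3\cdot 2^{-n}$, and call on Lemma~\ref{Lemma:CancelDirichlet} near the boundary. All the right ingredients are present.

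There is, however, one point where the term-by-term plan breaks down. You propose to bound $\cJ^{(a)}$, $\cN^{(a)}$ and $\cZ^{(a)}_+$ separately, and for $\cN^{(a)}$ on the scales $2^{-n}>|x|_P/3$ you say you will use the boundary bound~\eqref{Eq:BoundReconsBdry}. At level $k=0$ this fails: the target weight is $|x|_P^{-\sigma'}$ with $\sigma'>0$, and neither $\langle\cR f,P^{(a)}_n(x-\cdot)\rangle$ nor $\langle\Pi_x f(x),P^{(a)}_n(x-\cdot)\rangle$ vanishes at $P$ --- only the \emph{Dirichlet} kernel $K^{(a)}_n$ does. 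The bound~\eqref{Eq:BoundReconsBdry} controls $\langle\cR f,\eta^{2^{-n}}_x\rangle$ up to $2^{-n\sigma}$ in $L^p_x$ with no extra $|x|_P$, and multiplying by $|x|_P^{-\sigma'}$ gives something that is not in $L^p_x$ uniformly in $n$. The same obstruction hits the non-polynomial pieces of $\cJ^{(a)}$ at $k=0$, and the $-\langle\cR f,P^{(a)}_n\rangle$ half of your splitting of $\cZ^{(a)}_+$.

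The fix --- which is exactly what the paper does --- is to regroup \emph{before} estimating. At integer level $k$ one has the identity
\[
k!\,\cQ_k\cKa_n f(x)
=\langle\cR f,\partial^k_1 K^{(a)}_n(x,\cdot)\rangle
-\sum_{\zeta\le|k|-2}\langle\Pi_x\cQ_\zeta f(x),\partial^k P^{(a)}_n(x-\cdot)\rangle\;,
\]
and for $k=0$ (and $|k|=1$) the second sum is empty, so the entire near-boundary contribution is $\langle\cR f,K^{(a)}_n(x,\cdot)\rangle$, to which Lemma~\ref{Lemma:CancelDirichlet} applies directly and supplies the missing factor $|x|_P$. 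Your phrase ``the $P^{(a)}_n$-part is absorbed into the preceding estimates'' hints at this, but it must be read as a cancellation against $\cN^{(a)}$ and $\cJ^{(a)}$, not as an estimate; the three pieces are individually \emph{not} in $\cD^{\gamma',\sigma'}_{p,\infty}$. Away from $P$ (where $Z^{(a)}_n$ vanishes) and for $|k|\ge1$ (where $\sigma'-|k|\le0$), your separate bounds work as stated. With this single reorganization the rest of your outline --- the three translation regimes, the reconstruction identity, the two-model version --- goes through as in the paper.
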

\begin{remark}[Periodic]
In the periodic case, the statement remains true but is not sufficient for our purpose. Instead, we can prove that for any $\delta > 0$, if we take $\gamma' = \gamma+2-\delta$, then the restriction of the $\cD^{\gamma'}_{p,\infty}$-norm to the polynomial levels of the regularity structure satisfies the bounds \eqref{Eq:Convola} and \eqref{Eq:Convola2} with a prefactor $a^{-\delta/2}$.
\end{remark}

The reader familiar with regularity structures would have probably expected the parameter $\sigma'$ to be defined as $\sigma' = \sigma\wedge\alpha +2$. Recall that in the original version of the convolution theorem, the expression $\sigma\wedge\alpha$ arose in $\sigma'$ since this is the regularity of the distribution $\cR f$. Due to our choice of weights (for levels below $\sigma$, the weight forces the modelled distribution to ``vanish'' on the boundary), $\cR f$ has regularity $\sigma$ near the boundary so that $\sigma\wedge\alpha$ is replaced by $\sigma$ in our case. However, there is a price to pay for our choice of weights: one needs to show that convolution with the kernel $K^{(a)}$ indeed kills contributions below $\sigma'$. Since our kernel vanishes on the boundary, we are able to prove that the contributions at level $0$ vanish on the boundary as well and therefore we impose the further restriction that $\sigma'$ is below $1$.

\begin{proof}
We present the proof in the case where we deal with a single model, the case with two models follows \textit{mutatis mutandis}. The arguments of the proof are much simpler for the convolution with the smooth part of the kernel than for the singular part of the kernel so we only present the details for the latter.\\
As in the original proof~\cite[Th 5.12]{Hairer2014}, the bounds on the terms in the norm corresponding to non-integer levels are immediate consequences of the definition of the operator and of the properties of $\cI$. Therefore, we concentrate on the terms at integer levels, that is, we only bound the terms $\cQ_k \cKa_+ f(x)$ and $\cQ_k \big(\cKa_+ f(x+h)-\Gamma_{x+h,x}\cKa_+ f(x)\big)$ for all $k\in\N$ such that $|k| < \gamma'$, where $\cQ_k$ is the projection on the vector space generated by the symbol $X^k$ in $\cT_{<\gamma}$. For convenience, we define
\begin{equation}\label{Eq:cKan}\begin{split}
\cKa_n f(x) &:= \sum_{\zeta \in \cA_{<\gamma}} \sum_{|k| < \zeta + 2} \frac{X^k}{k!} \big\langle \Pi_x \cQ_\zeta f(x) , \partial^k_1 P_n^{(a)}(x,\cdot)\big\rangle\\
&+ \sum_{|k| < \gamma +2} \frac{X^k}{k!} \big\langle \cR f - \Pi_x f(x) , \partial^k_1 P_n^{(a)}(x,\cdot)\big\rangle+ \sum_{|k| < \gamma +2} \frac{X^k}{k!} \big\langle \cR f , \partial^k_1 Z_n^{(a)}(x,\cdot)\big\rangle\;.
\end{split}\end{equation}
Recall that $n_a$ is the smallest integer such that $2^{-n_a} \le 1/\sqrt a$. An important remark for the sequel is that $\sigma > - 1(1-1/p) \ge -1$ so that $2+\sigma$ is always strictly larger than $\sigma'$.

\smallskip
 
\textit{Local terms.} We argue differently according to the relative values of $|x|_P$, $2^{-n}$ and $2^{-n_a}$. We first consider the case $3\cdot 2^{-n} \ge |x|_P$. Reordering the terms appearing in \eqref{Eq:cKan}, we find
\begin{equation}\label{Eq:DecompoLocal} k!\cQ_k \cKa_n f(x) = \langle \cR f,\partial^k_1 K_n^{(a)}(x,\cdot)\rangle - \sum_{\zeta \le |k|-2} \langle \Pi_x\cQ_\zeta f(x) , \partial^k P_n^{(a)}(x-\cdot)\rangle\;.
\end{equation}
From the scaling properties of the Green's function $P_n$ and the expression of the kernel $K_n$, we deduce that there exists $c>0$ such that the function $\partial^k K_n^{(a)}(x,\cdot)$ can be written as $2^{-n(2-|k|)} \eta_x^{c2^{-n}}$ for some $\eta\in\ccB^r$ up to a uniformly bounded multiplicative constant. For $k\ge 1$, we have $\sigma'-|k| \le 0$ and we deduce that $2^{-n(\sigma'-|k|)} \lesssim |x|_P^{\sigma'-|k|}$ whenever $|x|_P \le 3\cdot 2^{-n}$. Therefore, we find
\begin{align*}
\sum_{n \ge n_a}\Big\|\frac{\big|\langle \cR f,\partial^k K_n^{(a)}(x,\cdot)\rangle\big|}{|x|_P^{\sigma'-|k|}} \un_{\{|x|_P \le 3\cdot 2^{-n}\}} \Big\|_{L^p_x} &\lesssim \sum_{n \ge n_a} \Big\| \sup_{\eta\in\ccB^r}\frac{\big|\langle \cR f,\eta^{c2^{-n}}_x\rangle\big|}{2^{-n\sigma}} \frac{2^{-n(2-|k|+\sigma)}}{|x|_P^{\sigma'-|k|}}\un_{\{|x|_P \le 4\cdot2^{-n}\}}\Big\|_{L^p_x}\\
&\lesssim \sum_{n \ge n_a} 2^{-n(2+\sigma-\sigma')}\Big\| \sup_{\eta\in\ccB^r}\frac{\big|\langle \cR f,\eta^{c2^{-n}}_x\rangle\big|}{2^{-n\sigma}}\un_{\{|x|_P \le 3\cdot 2^{-n}\}}\Big\|_{L^p_x}\\
&\lesssim \$f\$\;,
\end{align*}
thanks to \eqref{Eq:BoundReconsBdry}.\\
For $k=0$, we use the specific behaviour of the Green's function at the boundary. Namely, by Lemma \ref{Lemma:CancelDirichlet} we have
\begin{align*}
\sum_{n \ge n_a}\Big\|\frac{\big|\langle \cR f, K_n^{(a)}(x,\cdot)\rangle\big|}{|x|_P^{\sigma'}} \un_{\{|x|_P \le 3\cdot 2^{-n}\}} \Big\|_{L^p_x} &\lesssim \sum_{n \ge n_a} \Big\| \sup_{\eta\in\ccB^r}\frac{\big|\langle \cR f,\eta^{c2^{-n}}_x\rangle\big|}{2^{-n\sigma}} \frac{2^{-n(1+\sigma)}}{|x|_P^{\sigma'-1}}\un_{\{|x|_P \le 3\cdot 2^{-n}\}}\Big\|_{L^p_x}\\
&\lesssim \sum_{n \ge n_a} 2^{-n(2+\sigma-\sigma')} \Big\| \sup_{\eta\in\ccB^r}\frac{\big|\langle \cR f,\eta^{c2^{-n}}_x\rangle\big|}{2^{-n\sigma}} \un_{\{|x|_P \le 3\cdot 2^{-n}\}}\Big\|_{L^p_x}\;,
\end{align*}
so that, here again we get a bound of order $\$f\$$.\\
We now bound the second term on the right hand side of \eqref{Eq:DecompoLocal}. We use again the inequality $\sigma+2-\sigma' >0$ to get
\begin{align*}
\Big\|\sum_{n\ge n_a} \frac{\big|\langle \Pi_x\cQ_\zeta f(x),\partial^k P_n^{(a)}(x,\cdot)\rangle\big|}{|x|_P^{\sigma'-|k|}} \un_{\{|x|_P \le 3\cdot 2^{-n}\}}\Big\|_{L^p_x}&\lesssim \sum_{n\ge n_a} \Big\| \frac{|f_\zeta|(x)}{|x|_P^{\sigma-\zeta}} \un_{\{|x|_P \le 3\cdot 2^{-n}\}} \frac{2^{-n(2+\zeta-|k|)}}{|x|_P^{\sigma'-|k|-\sigma+\zeta}}\Big\|_{L^p_x}\\
&\lesssim \sum_{n\ge n_a} \Big\| \frac{|f_\zeta|(x)}{|x|_P^{\sigma-\zeta}} \un_{\{|x|_P \le 3\cdot 2^{-n}\}} |x|_P^{2+\sigma-\sigma'} \Big\|_{L^p_x}\\
&\lesssim \$f\$\;,
\end{align*}
as required.

\smallskip

We now consider the case $3\cdot 2^{-n} \le |x|_P$. Notice that in that case $Z^{(a)}_n(x,\cdot)$ vanishes so that we have
\begin{equation}\label{Eq:DecompoLocal2} k!\cQ_k \cKa_n f(x) = \langle \cR f - \Pi_x f(x),\partial^k P_n^{(a)}(x-\cdot)\rangle + \sum_{\zeta > |k|-2} \langle \Pi_x\cQ_\zeta f(x) , \partial^k P_n^{(a)}(x-\cdot)\rangle\;.
\end{equation}
We have
\begin{align*}
&\sum_{n\ge n_a} \Big\|\frac{\langle \cR f-\Pi_x f(x),\partial^k P_n^{(a)}(x-\cdot)\rangle}{|x|_P^{\sigma'-|k|}} \un_{\{|x|_P \ge 3\cdot 2^{-n}\}}\Big\|_{L^p_x}\\
&\lesssim \sum_{n\ge n_a} \Big\|\sup_{\eta\in\ccB^r} \frac{\big|\langle \cR f-\Pi_x f(x),\eta^{2^{-n}}_x\rangle\big|}{2^{-n\gamma} |x|_P^{\sigma-\gamma}}\un_{\{|x|_P \ge 3\cdot 2^{-n}\}} \frac{2^{-n(2+\gamma-|k|)}}{|x|_P^{\sigma'-|k|+\gamma-\sigma}}\Big\|_{L^p_x}\;.
\end{align*}
Observe that $|x|_P^{-\sigma'+|k|-\gamma+\sigma} \lesssim 2^{-n(-\sigma'+|k|-\gamma+\sigma)} \vee 1$ so that there exists $\delta >0$ such that
$$ \lesssim \sum_{n\ge n_a} 2^{-n\delta}\Big\|\sup_{\eta\in\ccB^r} \frac{\big|\langle \cR f-\Pi_x f(x),\eta^{2^{-n}}_x\rangle\big|}{2^{-n\gamma} |x|_P^{\sigma-\gamma}}\un_{\{|x|_P \ge 3\cdot 2^{-n}\}}\Big\|_{L^p_x}\;,$$
which is of order $\$f\$$ as required. Similarly, we have for all $\zeta > |k|-2$
\begin{align*}
&\sum_{n\ge n_a} \Big\|\frac{\langle \Pi_x\cQ_\zeta f(x) , \partial^k P_n^{(a)}(x-\cdot)\rangle}{|x|_P^{\sigma'-|k|}}\un_{\{|x|_P \ge 3\cdot 2^{-n}\}}\Big\|_{L^p_x}\\ &\lesssim \sum_{n\ge n_a} \Big\|\frac{|f_\zeta|(x)}{|x|_P^{\sigma-\zeta}}\un_{\{|x|_P \ge 3\cdot 2^{-n}\}}\frac{2^{-n(2+\zeta-|k|)}}{|x|_P^{\sigma'-|k|-\sigma+\zeta}}\Big\|_{L^p_x}\\
&\lesssim \sum_{n\ge n_a} 2^{-n\delta} \$f\$\;,
\end{align*}
which is bounded by a term of order $\$f\$$.

\smallskip

\textit{Translation terms.} We introduce the notation
$$ K^{(a),k,\gamma'}_{n,x+h,x} := \partial^k_1 K^{(a)}_{n}(x+h,\cdot) - \sum_{\ell:|k+\ell| < \gamma'} \frac{h^\ell}{\ell!} \partial^{k+\ell}_1 K^{(a)}_{n}(x,\cdot)\;,$$
and similarly
$$ P^{(a),k,\gamma'}_{n,x+h,x} := \partial^k P^{(a)}_{n}(x+h-\cdot) - \sum_{\ell:|k+\ell| < \gamma'} \frac{h^\ell}{\ell!} \partial^{k+\ell} P^{(a)}_{n}(x-\cdot)\;.$$
Taylor's formula allows to get the identity
\begin{equation}\label{Eq:Taylor}
K^{(a),k,\gamma'}_{n,x+h,x} = \int_{u\in [0,1]} \sum_{\ell:|k+\ell| = \lceil \gamma' \rceil} \partial^{k+\ell}_1 K^{(a)}_{n}(x+u h,\cdot) \frac{h^{\ell}}{\ell !} \big(\lceil \gamma'\rceil - |k|\big)(1-u)^{\lfloor \gamma'\rfloor - |k|} du\;,
\end{equation} 
and similarly for $P^{(a),k,\gamma'}_{n,x+h,x}$.\\

We start with the case $|x|_P \le 3\cdot 2^{-n}$. We write
\begin{align*}
&k!\cQ_k\big(\cKa_n f(x+h) - \Gamma_{x+h,x} \cKa_n f(x)\big) = \langle \cR f, K^{(a),k,\gamma'}_{n,x+h,x}\rangle - \langle \Pi_x f(x), P^{(a),k,\gamma'}_{n,x+h,x}\rangle\\
&- \sum_{\zeta \le |k|-2} \langle \Pi_{x+h} \cQ_\zeta \big(f(x+h)-\Gamma_{x+h,x} f(x)\big) , \partial^k P_n^{(a)}(x+h-\cdot) \rangle
\end{align*}
We bound separately the three terms on the r.h.s. The first term is dealt with using \eqref{Eq:Taylor}. Regarding the second term, we use the same identity to get
\begin{align*}
&\sum_{n\ge n_a}\sup_{h: |h| < 1/\sqrt a} \Big\| \frac{\langle \Pi_x f(x), P^{(a),k,\gamma'}_{n,x+h,x}\rangle}{|h|^{\gamma'-|k|} |x|_P^{\sigma'-\gamma'}} \un_{\{3|h| \le |x|_P \le 3 \cdot 2^{-n}\}} \Big\|_{L^p_x} \\
&\lesssim \sum_{n\ge n_a} \sum_{\ell:|k+\ell| =\lceil \gamma' \rceil} \sup_{h: |h| < 1/\sqrt a} \Big\| \frac{|f_\zeta|(x)}{|x|_P^{\sigma-\zeta}} 2^{-n(2-|k+\ell|+\zeta)} |h|^{|k+\ell|-\gamma'} |x|_P^{\sigma-\zeta-\sigma'+\gamma'}\un_{\{3|h| \le |x|_P \le 3\cdot 2^{-n}\}} \Big\|_{L^p_x} \;.
\end{align*}
At this point, we observe that $|k+\ell|-\gamma' > 0$, $|k+\ell| > 2+\gamma$ and $\zeta < \gamma$ so that we obtain the further bound
\begin{align*}
&\lesssim \sum_{n\ge n_a} \sum_{\ell:|k+\ell| =\lceil \gamma' \rceil} \Big\| \frac{|f_\zeta|(x)}{|x|^{\sigma-\zeta}} 2^{-n(2+\sigma-\sigma')}\un_{\{|x|_P \le 3 \cdot 2^{-n}\}} \Big\|_{L^p_x}\lesssim \$f\$\;.
\end{align*}
The bound of the third term is similar.\\

We turn to the case $3|h| \le 3\cdot 2^{-n} \le |x|_P$. Notice that in that case, $K^{(a)}_n(x,\cdot) = P^{(a)}_n(x,\cdot)$ so that we can write
\begin{align*}
&k!\cQ_k\big(\cKa_n f(x+h) - \Gamma_{x+h,x} \cKa_n f(x)\big) = \langle \cR f-\Pi_x f(x), P^{(a),k,\gamma'}_{n,x+h,x}\rangle\\
&- \sum_{\zeta \le |k|-2} \langle \Pi_{x+h} \cQ_\zeta \big(f(x+h)-\Gamma_{x+h,x} f(x)\big) , \partial^k P_n^{(a)}(x+h-\cdot) \rangle\;.
\end{align*}
Using the reconstruction bound \eqref{Eq:ReconsBound}, we obtain
\begin{align*}
&\sup_{h: |h| < 1/\sqrt a} \Big\| \sum_{n\ge n_a}\frac{\langle \cR f - \Pi_x f(x), P^{(a),k,\gamma'}_{n,x+h,x}\rangle}{|h|^{\gamma'-|k|} |x|_P^{\sigma'-\gamma'}} \un_{\{3|h| \le 3\cdot 2^{-n} \le |x|_P\}} \Big\|_{L^p_x} \\
&\lesssim \sum_{\ell:|k+\ell| =\lceil \gamma' \rceil}\sup_{h: |h| < 1/\sqrt a}\sum_{n\ge n_a} (|h|2^n)^{|k+\ell|-\gamma'}\un_{\{|h| \le 2^{-n}\}}\\
&\qquad\qquad\qquad\times\Big\| \sup_{\eta \in \ccB^r} \frac{\big|\langle \cR f - \Pi_x f(x), \eta^{2\cdot 2^{-n}}_x\rangle\big|}{2^{-n\gamma} |x|_P^{\sigma-\gamma}} 2^{-n(2+\gamma-\gamma')}|x|_P^{\sigma-\gamma-\sigma'+\gamma'}  \un_{\{3\cdot 2^{-n} \le |x|_P\}} \Big\|_{L^p_x}\\
&\lesssim \sup_{n\ge n_a} \Big\| \sup_{\eta \in \ccB^r} \frac{\big|\langle \cR f - \Pi_x f(x), \eta^{2\cdot 2^{-n}}_x\rangle\big|}{2^{-n\gamma} |x|_P^{\sigma-\gamma}} \un_{\{3\cdot 2^{-n} \le |x|_P\}} \Big\|_{L^p_x}\;,
\end{align*}
as required. The bound of the second term is simpler so we do not present the details.\\
We now consider the case $3\cdot 2^{-n} \le 3|h| \le |x|_P$. We write
\begin{align*}
&k!\cQ_k\big(\cKa_n f(x+h) - \Gamma_{x+h,x} \cKa_n f(x)\big) = \langle \cR f-\Pi_{x+h} f(x+h), \partial^k P^{(a)}_{n}(x+h-\cdot)\rangle\\
&-\langle \cR f-\Pi_{x} f(x), \sum_{\ell:|k+\ell|<\gamma'} \frac{h^\ell}{\ell!}\partial^{k+\ell} P^{(a)}_{n}(x-\cdot)\rangle\\
&+ \sum_{\zeta > |k|-2} \langle \Pi_{x+h} \cQ_\zeta \big(f(x+h)-\Gamma_{x+h,x} f(x)\big) , \partial^k P_n^{(a)}(x+h-\cdot) \rangle\;.
\end{align*}
Let us present in detail the bound of the first term. We have
\begin{align*}
&\sup_{h: |h| < 1/\sqrt a} \Big\| \sum_{n\ge n_a}\frac{\langle \cR f-\Pi_{x+h} f(x+h), \partial^k P^{(a)}_{n}(x+h-\cdot)\rangle}{|h|^{\gamma'-|k|} |x|_P^{\sigma'-\gamma'}} \un_{\{3\cdot 2^{-n} \le 3|h| \le |x|_P\}} \Big\|_{L^p_x}\\
&\lesssim\sup_{h: |h| < 1/\sqrt a} \Big\| \sum_{n\ge n_a}\sup_{\eta\in\ccB^r}\frac{\big|\langle \cR f-\Pi_{y} f(y), \eta^{2^{-n}}_y\rangle\big|}{2^{-n\gamma} |y|_P^{\sigma-\gamma}} \un_{\{2\cdot 2^{-n} \le 2|h| \le |y|_P\}}2^{-n(2-|k|+\gamma)}|h|^{|k|-\gamma'}|y|_P^{\sigma-\gamma+\gamma'-\sigma'} \Big\|_{L^p_y}\\
&\lesssim \sup_{h: |h| < 1/\sqrt a}\sum_{n\ge n_a} \Big(\frac{2^{-n}}{|h|}\Big)^{\gamma'-|k|}\un_{\{2^{-n} \le |h| \}} \Big\| \sup_{\eta\in\ccB^r}\frac{\big|\langle \cR f-\Pi_{y} f(y), \eta^{2^{-n}}_y\rangle\big|}{2^{-n\gamma} |y|_P^{\sigma-\gamma}} \un_{\{2\cdot 2^{-n}\le |y|_P\}}\Big\|_{L^p_y}\\
&\lesssim \sup_{n\ge n_a} \Big\| \sup_{\eta\in\ccB^r}\frac{\big|\langle \cR f-\Pi_{y} f(y), \eta^{2^{-n}}_y\rangle\big|}{2^{-n\gamma} |y|_P^{\sigma-\gamma}} \un_{\{2\cdot 2^{-n} \le |y|_P\}} \Big\|_{L^p_y}\;,
\end{align*}
which is bounded by a term of order $\$f\$$.

\smallskip

\textit{Convolution identity.} We already know that $\cK^{(a)}_+ f \in \cD^{\gamma',\sigma'}_{p,\infty}$. Up to taking $\gamma''$ smaller than $\gamma'$ and $\sigma'' = \sigma'\wedge \gamma''$, we can always assume that $\gamma'\in (0,1)$. The uniqueness part of the reconstruction theorem away from the boundary $P$, together with the fact that $K^{(a)}_+*\cR f$ is completely determined by its evaluations away from $P$, ensure that, in order to show that $\cR \cK^{(a)}_+ f = K^{(a)}_+ * \cR f$, it suffices to establish the following bound:
\begin{equation*}
\sup_{\lambda \in (0,1/\sqrt a)} \Big\| \sup_{\eta\in\ccB^r} \frac{\big|\langle K^{(a)}_+ * \cR f - \Pi_x \cK^{(a)}_+ f(x) , \eta^\lambda_x\rangle\big|}{\lambda^{\gamma'} |x|_P^{\sigma'-\gamma'}} \un_{\{3\lambda < |x|_P\}}\Big\|_{L^p_x} < \infty\;.
\end{equation*}
Using the fact that $P^{(a)}_+$, and therefore $K^{(a)}_+$, annihilates all polynomials of degree at most $r$, a straightforward computation shows the following identity:
\begin{align*}
\langle K^{(a)}_+ * \cR f - \Pi_x \cK^{(a)}_+ f(x) , \eta^\lambda_x\rangle = \int_h \sum_{n\ge n_a} \Big(\langle \cR f-\Pi_x f(x), P^{(a),0,\gamma'}_{n,x+h,x}\rangle + \langle \cR f , Z^{(a),0,\gamma'}_{n,x+h,x}\rangle\Big) \eta^\lambda(h) dh\;.
\end{align*}
Treating separately the terms $n$ such that $3\lambda \le |x|_P \le 3\cdot 2^{-n}$, $3\lambda \le 3\cdot 2^{-n} \le |x|_P$ and $3\cdot 2^{-n} \le 3 \lambda \le |x|_P$ and applying the arguments presented in the \textit{Translation terms} bounds, it is straightforward to get the required estimate.
\end{proof}

The next result shows how to lift in the regularity structure the convolution of some classical distribution with $K^{(a)}$.

\begin{proposition}\label{Prop:ConvolClassical}
For $\delta \in (0,1/2)$, let $g \in \cB^{-\delta}_{p,\infty}$, take $\gamma = 2 - \delta$ and $\sigma = 1$. Then, the modelled distribution:
$$ \cK^{(a)} g(x) :=\sum_{k\in\N^d: |k| < \gamma} \frac{X^k}{k!} \langle g, \partial^k_1 K^{(a)}(x,\cdot) \rangle\;,\quad x\in (-L,L)^d\;,$$
belongs to the space $\cD^{\gamma,\sigma}_{p,\infty}$ and satisfies the following bound uniformly over all $a,L\ge 1$ and all $g\in\cB^{-\delta}_{p,\infty}$
$$ \$ \cK^{(a)} g\$_{\gamma,\sigma} \lesssim \| g\|_{\cB^{-\delta}_{p,\infty}}\;.$$
\end{proposition}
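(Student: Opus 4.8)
The plan is to run the argument of Theorem~\ref{Th:ConvolAbs} directly; it is much lighter here, since there are no ill-defined products, only the polynomial components of the regularity structure are involved, and $g$ is a classical distribution (of regularity $-\delta\in(-1/2,0)$) rather than a modelled one. Note that this is \emph{not} a special case of Theorem~\ref{Th:ConvolAbs}, because $g$ is not a modelled distribution and $\sigma=1$ is an integer, so a direct argument is needed. Since $\gamma=2-\delta\in(1,2)$, the modelled distribution $\cK^{(a)}g$ carries only the components $X^k$ with $|k|\le1$, and on this polynomial sector the structure group acts by $\Gamma_{x+h,x}X^k=(X+h)^k$, so $\$\cK^{(a)}g\$_{\gamma,\sigma}$ does not depend on the model. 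Writing $F_k(x):=\langle g,\partial^k_1 K^{(a)}(x,\cdot)\rangle$ for $|k|\le1$, the level-$|k|$ component of $\cK^{(a)}g$ is $F_kX^k/k!$, and — as in the proof of Theorem~\ref{Th:ConvolAbs} — the level-$|m|$ component of $\cK^{(a)}g(x+h)-\Gamma_{x+h,x}\cK^{(a)}g(x)$ is the Taylor remainder $F_m(x+h)-\sum_{|m+\ell|<\gamma}\frac{h^\ell}{\ell!}F_{m+\ell}(x)$, which I would read off at the level of the dyadic pieces $K^{(a)}_n$ (whose kernels are smooth), so that everything is manifestly well defined.

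The single analytic input I need is the standard estimate
\begin{equation}\label{Eq:PlanBesov}
\Big\|\sup_{\eta\in\ccB^r}\big|\langle g,\eta_x^\lambda\rangle\big|\Big\|_{L^p_x}\lesssim \lambda^{-\delta}\,\|g\|_{\cB^{-\delta}_{p,\infty}}\;,\qquad\lambda\in(0,1]\;,
\end{equation}
a consequence of the Littlewood--Paley/wavelet characterisation of $\cB^{-\delta}_{p,\infty}$ (valid for all $p\in[1,\infty]$), which I would combine with the decomposition $K^{(a)}=\sum_{n\ge n_a}K^{(a)}_n+K^{(a)}_-$, the scaling bound $\partial^k_1 K^{(a)}_n(z,\cdot)=2^{-n(2-|k|)}\eta_z^{c2^{-n}}$ (up to a constant, for some $\eta\in\ccB^r$, from Definition~\ref{Def:Kernel} and the construction of $Z^{(a)}_n$), the bounds~\eqref{Eq:BndP} for $K^{(a)}_-$, and the near-boundary cancellation of Lemma~\ref{Lemma:CancelDirichlet}: for $|z|_P\le3\cdot2^{-n}$ the generic prefactor $2^{-2n}$ in $K^{(a)}_n(z,\cdot)$ may be traded for $2^{-n}|z|_P$, and similarly for $K^{(a)}_-$. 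With these in hand, the \emph{local terms} go as follows. The level-$1$ component carries no weight ($\sigma-1=0$), so summing $|\langle g,\partial^{e_i}_1 K^{(a)}_n(x,\cdot)\rangle|\lesssim 2^{-n}\sup_\eta|\langle g,\eta_x^{c2^{-n}}\rangle|$ over $n$ and using~\eqref{Eq:PlanBesov} gives the geometric series $\sum_n 2^{-n(1-\delta)}\|g\|\lesssim\|g\|$. For the level-$0$ component, divided by $|x|_P$, I would split each dyadic piece $n$ into the region $|x|_P\le3\cdot2^{-n}$ — where Lemma~\ref{Lemma:CancelDirichlet} cancels the $|x|_P$ and leaves $2^{-n}\langle g,\varphi_x^{c2^{-n}}\rangle$, of $L^p$ norm $\lesssim 2^{-n(1-\delta)}\|g\|$ — and the region $|x|_P>3\cdot2^{-n}$ — where $Z^{(a)}_n$ vanishes and $1/|x|_P\lesssim 2^n$ is absorbed by the scaling factor $2^{-2n}$, again $\lesssim 2^{-n(1-\delta)}\|g\|$. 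The $K^{(a)}_-$ contribution, which vanishes at the boundary at order $|x|_P$ by the $K^{(a)}_-$ analogue of Lemma~\ref{Lemma:CancelDirichlet}, is treated the same way after decomposing $\partial^k_1 K^{(a)}_-(x,\cdot)$ over a scale-$2^{-n_a}$ partition of unity and summing the exponentially decaying weights of~\eqref{Eq:BndP}; it is $\lesssim 2^{-n_a(1-\delta)}\|g\|$.

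For the \emph{translation terms} I would proceed exactly as in Theorem~\ref{Th:ConvolAbs}, writing the increment as $\sum_n\big(\cK^{(a)}_n g(x+h)-\Gamma_{x+h,x}\cK^{(a)}_n g(x)\big)$ plus the $K^{(a)}_-$-part, the level-$k$ term of the $n$-th summand being $\frac1{k!}\langle g,K^{(a),k,\gamma}_{n,x+h,x}\rangle$ with $K^{(a),k,\gamma}_{n,x+h,x}:=\partial^k_1 K^{(a)}_n(x+h,\cdot)-\sum_{|k+\ell|<\gamma}\frac{h^\ell}{\ell!}\partial^{k+\ell}_1 K^{(a)}_n(x,\cdot)$, and splitting according to the sizes of $2^{-n}$, $|h|$ and $|x|_P$ (recalling the cut-off $\un_{|x|_P>3|h|}$). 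When $|h|\le2^{-n}$, Taylor's formula~\eqref{Eq:Taylor} (with our $\gamma$ in the role of $\gamma'$ there) expresses $K^{(a),k,\gamma}_{n,x+h,x}$ through second $x$-derivatives of $K^{(a)}_n$, whose scaling factor is $\lesssim1$ at scale $2^{-n}$; after pairing with $g$, dividing by $|h|^{\gamma-|k|}|x|_P^{\sigma-\gamma}$ and using~\eqref{Eq:PlanBesov}, one obtains in the near-boundary range $3|h|<|x|_P\le3\cdot2^{-n}$ a bound $\lesssim|h|^\delta 2^{-n(1-2\delta)}\|g\|$ (using $|x|_P^{1-\delta}\lesssim 2^{-n(1-\delta)}$), whose sum over $n$ converges precisely because $\delta<1/2$, and in the interior range $3\cdot2^{-n}\le|x|_P$ a bound $\lesssim|h|^\delta 2^{n\delta}\|g\|$ (using $|x|_P^{1-\delta}\le1$), which, summed over $n$ with $2^{-n}\ge|h|$, gives $\lesssim\|g\|$. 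When $2^{-n}<|h|\le|x|_P/3$, Taylor is unavailable; I would bound $\cK^{(a)}_n g(x+h)$ and $\Gamma_{x+h,x}\cK^{(a)}_n g(x)$ separately using the scaling factor $2^{-n(2-|k|)}$ and~\eqref{Eq:PlanBesov}, obtaining $2^{-n(2-|k|-\delta)}\|g\|$ per $n$, and then sum over $n$ with $2^{-n}<|h|$ \emph{before} dividing by $|h|^{\gamma-|k|}$: since $|k|<\gamma$ (so $2-|k|-\delta>0$) the geometric series converges and produces $|h|^{2-|k|-\delta}\|g\|$, which cancels $|h|^{\gamma-|k|}=|h|^{2-\delta-|k|}$ up to the harmless factor $|x|_P^{1-\delta}\le1$. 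The $K^{(a)}_-$-part of the increment is a smooth Taylor remainder and is handled as in the local terms.

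The only delicate point — and the one I expect to be the main obstacle — is to make all of these bounds uniform over $a,L\ge1$ near the boundary $P$: this is exactly where Lemma~\ref{Lemma:CancelDirichlet} and its $K^{(a)}_-$ analogue enter (so that the level-$0$ component of $\cK^{(a)}g$ vanishes at the rate $|x|_P$ forced by $\sigma=1$), and where the hypothesis $\delta<1/2$ is used (to make the near-boundary geometric series in the translation estimate summable). Collecting the four contributions then yields $\$\cK^{(a)}g\$_{\gamma,\sigma}\lesssim\|g\|_{\cB^{-\delta}_{p,\infty}}$ uniformly in $a,L\ge1$, and in particular $\cK^{(a)}g\in\cD^{\gamma,\sigma}_{p,\infty}$.
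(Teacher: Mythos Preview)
Your proposal is correct and follows essentially the same approach as the paper: decompose $K^{(a)}$ into its dyadic singular pieces $K^{(a)}_n$ plus the smooth remainder $K^{(a)}_-$, handle the local terms by splitting according to whether $2^{-n}$ is larger or smaller than $|x|_P$ (invoking Lemma~\ref{Lemma:CancelDirichlet} in the near-boundary regime for $k=0$), and then do the translation terms by a three-way split in $2^{-n}$, $|h|$, $|x|_P$ exactly as in Theorem~\ref{Th:ConvolAbs}. In fact you spell out the translation bounds in more detail than the paper does (the paper only says ``distinguish three cases \ldots and then argue similarly as in the previous proof''), and you correctly isolate the one place where the hypothesis $\delta<1/2$ is genuinely needed, namely the summability of $\sum_n 2^{-n(1-2\delta)}$ in the near-boundary translation estimate.
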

\begin{proof}
Let us consider the terms in $\cK^{(a)} g(x)$ arising from the singular part $K^{(a)}_+$ of the kernel: for convenience, let $\cK^{(a)}_+ g(x)$ be the same expression as above except that $K^{(a)}$ is replaced by $K^{(a)}_+$.\\
We start with the local bounds of the $\cD^{\gamma,\sigma}_{p,\infty}$-norm. We aim at bounding
$$ \sum_{n\ge n_a} \Big\| \frac{\langle g, \partial^k_1 K^{(a)}_n(x,\cdot)\rangle}{|x|_P^{\sigma-|k|}} \Big\|_{L^p_x}\;,$$
for any $k\in\N^d$ such that $|k| < \gamma$. We first consider the case $2^{-n} \le |x|_P$. We have
$$ \sum_{n\ge n_a} \Big\| \frac{\langle g, \partial^k_1 K^{(a)}_n(x,\cdot)\rangle}{|x|_P^{\sigma-|k|}} \un_{\{|x|_P \ge 2^{-n}\}} \Big\|_{L^p_x} \lesssim \begin{cases} \sum_{n\ge n_a} 2^{-n(2-|k|-\delta)}  \| g\|_{\cB^{-\delta}_{p,\infty}}\quad &\mbox{ if } |k| = 1\;,\\
\sum_{n\ge n_a} 2^{-n(2-\delta-\sigma)} \| g\|_{\cB^{-\delta}_{p,\infty}}\quad &\mbox{ if } k=0\;.
\end{cases}
$$
In any case, this yields a term of order $\| g\|_{\cB^{-\delta}_{p,\infty}}$. We now consider the case $2^{-n} >|x|_P$. When $|k| = 1$, we have
$$ \sum_{n\ge n_a} \Big\| \frac{\langle g, \partial^k_1 K^{(a)}_n(x,\cdot)\rangle}{|x|_P^{\sigma-|k|}} \un_{\{|x|_P \le 2^{-n}\}} \Big\|_{L^p_x} \lesssim \sum_{n\ge n_a} 2^{-n(2-|k|-\delta)} \| g\|_{\cB^{-\delta}_{p,\infty}} \lesssim \| g\|_{\cB^{-\delta}_{p,\infty}}\;.$$
When $k=0$, we rely on Lemma \ref{Lemma:CancelDirichlet} to get
\begin{align*}
\sum_{n\ge n_a} \Big\| \frac{\langle g, \partial^k_1 K^{(a)}_n(x,\cdot)\rangle}{|x|_P^{\sigma-|k|}} \un_{\{|x|_P \le 2^{-n}\}} \Big\|_{L^p_x} &\lesssim \sum_{n\ge n_a} \Big\| \sup_{\eta\in\ccB^r} \frac{\langle g, \eta^{c2^{-n}}_x \rangle}{2^{n\delta}} |x|_P^{1-\sigma} 2^{-n(1-\delta)} \un_{\{|x|_P \le 2^{-n}\}} \Big\|_{L^p_x}\\
&\lesssim \sum_{n\ge n_a} 2^{-n(1-\delta)} \| g\|_{\cB^{-\delta}_{p,\infty}} \lesssim \| g\|_{\cB^{-\delta}_{p,\infty}}\;.
\end{align*}
To obtain the translation bounds of the $\cD^{\gamma,\sigma}_{p,\infty}$-norm, one needs to distinguish three cases according to the relative values of $2^{-n}$, $|h|$ and $|x|_P$ and then to argue similarly as in the previous proof.\\
To treat the convolution with the smooth part $K^{(a)}_-$ of the kernel, the arguments are similar so we do not present the details.
\end{proof}

\subsection{Fixed point results}\label{Subsec:Fixed}

Recall that $\kappa > 0$ can be taken as small as desired. We set
$$ p = \begin{cases} 2 &\mbox{ if } d\le 2\;,\\
\frac{1}{2-\frac{d}{2}-3\kappa}&\mbox{ if } d=3\;,
\end{cases}\qquad\gamma = 2 - d \Big(\frac12 - \frac1{p}\Big)-\kappa\;,\qquad \sigma = 1-\kappa\;.$$
This choice of parameters ensures that the space $\cD^{\gamma-\frac{d}{2}-\kappa,\sigma-\frac{d}{2}-\kappa}_{p,\infty}$ (in which $f\cdot\Xi$ lives whenever $f\in \cD^{\gamma,\sigma}_{p,\infty}$) falls into the scope of Theorem \ref{Th:Recons}. Note that, unfortunately, in dimension $3$ the natural choice $p=2$ leads to
$$ \sigma - \frac{d}{2} - \kappa = -\frac12 - 2\kappa < -1(1-\frac1{2})\;,$$
so that one needs to increase slightly $p$ as we did above for the reconstruction theorem to apply.

\begin{proposition}\label{Prop:Resolv}
There exists a positive function $a\mapsto A(a)$ increasing to infinity and independent of $L\ge 1$ such that the following holds. For any admissible model $(\Pi^{(a)},\Gamma^{(a)})$ such that $\$\Pi^{(a)}\$(1+ \$\Gamma^{(a)}\$) < A(a)$, for any $g\in L^2$ and any $b\in (-2,2)$, the map
$$ \cM_a: f \mapsto \cK^{(a)} g - \cK^{(a)}(f\cdot\Xi) - b\,\cK^{(a)} f\;,$$
admits a unique fixed point in $\cD^{\gamma,\sigma}_{p,\infty}$ that we denote by $\cG^{a,b} g$. Furthermore, there exists a constant $C>0$ such that for any two admissible models $(\Pi^{(a)},\Gamma^{(a)})$ and $(\bar\Pi^{(a)},\bar\Gamma^{(a)})$ that satisfy $\$\Pi^{(a)}\$(1+ \$\Gamma^{(a)})\$ < A(a)$ and $\$\bar\Pi^{(a)}\$(1+ \$\bar\Gamma^{(a)})\$ < A(a)$, we have uniformly over all $g,\bar g \in L^2$, all $b,\bar b\in (-2,2)$ and all $a,L\ge 1$
\begin{equation}\label{Eq:ContBnd}
\$ \cG^{a,b} g ; \bar\cG^{a,b} \bar g \$ \le C \Big( \|g-\bar g\|_{L^2} + \|\bar g\|_{L^2} \big(|b-\bar b|+ \$ \Pi^{(a)} - \bar\Pi^{(a)} \$ (1+\$\bar\Gamma^{(a)}\$) + \$ \bar\Pi^{(a)}\$ \$\Gamma^{(a)}-\bar\Gamma^{(a)}\$ \big) \Big) \;.
\end{equation}
\end{proposition}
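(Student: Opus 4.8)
The plan is a contraction-mapping argument in $\cD^{\gamma,\sigma}_{p,\infty}$, the contractivity being supplied not by a small time interval, as in parabolic problems, but by the parameter $a$: all of our kernels and modelled-distribution norms are cut off at the length $2^{-n_a}\asymp 1/\sqrt a$, and it is this scale that will produce the necessary smallness.

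\emph{Self-mapping.} I first check that $\cM_a$ maps $\cD^{\gamma,\sigma}_{p,\infty}$ into itself. Multiplication by the symbol $\Xi$ is an isometry from $\cD^{\gamma,\sigma}_{p,\infty}$ onto its image in $\cD^{\gamma-d/2-\kappa,\,\sigma-d/2-\kappa}_{p,\infty}$: since $|\tau\Xi|=|\tau|-d/2-\kappa$, since the structure group acts componentwise with $\Gamma_{x,y}(\tau\Xi)=(\Gamma_{x,y}\tau)\Xi$, and since both weight exponents shift consistently, no model norm is picked up. The choice of $p$ made in Subsection \ref{Subsec:Fixed} is precisely what guarantees $\sigma-d/2-\kappa>-1+1/p$, so this target space falls within the scope of Theorems \ref{Th:Recons} and \ref{Th:ConvolAbs} (the non-integrality requirements holding for a generic choice of $\kappa$); hence $\cK^{(a)}(f\cdot\Xi)\in\cD^{\gamma+2-d/2-\kappa,\,1}_{p,\infty}$ and $\cK^{(a)}f\in\cD^{\gamma+2,\,1}_{p,\infty}$. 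Moreover $g\in L^2\hookrightarrow\cB^{-d(1/2-1/p)}_{p,\infty}$ with $d(1/2-1/p)<1/2$, so Proposition \ref{Prop:ConvolClassical} yields $\cK^{(a)}g\in\cD^{2-d(1/2-1/p),\,1}_{p,\infty}$. In every case the output lands in a space $\cD^{\gamma_1,\sigma_1}_{p,\infty}$ with $\gamma_1>\gamma$ and $\sigma_1=1\ge\sigma$.

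\emph{The key estimate, and the main obstacle.} The crucial, and I expect most delicate, point is that such an inclusion $\cD^{\gamma_1,\sigma_1}_{p,\infty}\hookrightarrow\cD^{\gamma,\sigma}_{p,\infty}$ holds with operator norm $\lesssim a^{-c}(1+\$\Gamma^{(a)}\$)$ for some $c=c(\gamma_1-\gamma)>0$. Truncating a modelled distribution to $\cT_{<\gamma}$ only affects the coherence term of the norm; there the surplus exponent appears on the increment as a factor $|h|^{\gamma_1-\gamma}\le(1/\sqrt a)^{\gamma_1-\gamma}$ (recall $|h|<1/\sqrt a$), while the mismatch of the boundary weights $|x|_P^{\sigma_1-\gamma_1}$ versus $|x|_P^{\sigma-\gamma}$ is absorbed using $|h|<|x|_P$ on the region $\un_{\{|x|_P>3|h|\}}$, at the cost of one power of $\$\Gamma^{(a)}\$$ to carry the components of $f$ at homogeneities in $[\gamma,\gamma_1)$ across $\Gamma$. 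Equivalently, one reruns the proofs of Theorem \ref{Th:ConvolAbs} and Proposition \ref{Prop:ConvolClassical} with target space $\cD^{\gamma,\sigma}_{p,\infty}$ directly: regularity is then being wasted, so every geometric series over $n\ge n_a$ converges with an extra factor $2^{-n_a c}\asymp a^{-c/2}$. In either form this gives, for a fixed integer $j$ and some $c>0$,
\begin{align*}
\$\cK^{(a)}(f\cdot\Xi)\$_{\gamma,\sigma}+\$\cK^{(a)}f\$_{\gamma,\sigma}&\lesssim a^{-c}\,\$f\$_{\gamma,\sigma}\,\$\Pi^{(a)}\$(1+\$\Gamma^{(a)}\$)^{j}\;,\\
\$\cK^{(a)}g\$_{\gamma,\sigma}&\lesssim\|g\|_{L^2}\;,
\end{align*}
uniformly over all $a,L\ge1$, together with the corresponding two-model bounds (built from \eqref{Eq:Convola2}, from the isometry property of multiplication by $\Xi$ for the two-model distance, and from the fact that $\cK^{(a)}g$ lives in the model-independent polynomial sector, so that $\$\cK^{(a)}g;\bar\cK^{(a)}\bar g\$\lesssim\|g-\bar g\|_{L^2}$).

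\emph{Fixed point and continuity.} It follows that $\$\cM_a f-\cM_a f'\$_{\gamma,\sigma}\le C_1 a^{-c}\,\$\Pi^{(a)}\$(1+\$\Gamma^{(a)}\$)^{j}\,\$f-f'\$_{\gamma,\sigma}$ and $\$\cM_a f\$_{\gamma,\sigma}\le C_0\|g\|_{L^2}+C_1 a^{-c}\,\$\Pi^{(a)}\$(1+\$\Gamma^{(a)}\$)^{j}\,\$f\$_{\gamma,\sigma}$, the factor $|b|<2$ being harmless. Since $\$\Pi^{(a)}\$$ is bounded below by a universal positive constant (test $\Pi_x\un$ against a fixed bump function), $\$\Pi^{(a)}\$(1+\$\Gamma^{(a)}\$)^{j}$ is dominated by a fixed power of $\$\Pi^{(a)}\$(1+\$\Gamma^{(a)}\$)$, so there is an increasing function $A$ with $A(a)\to\infty$, independent of $L$, such that $\$\Pi^{(a)}\$(1+\$\Gamma^{(a)}\$)<A(a)$ forces $C_1 a^{-c}\$\Pi^{(a)}\$(1+\$\Gamma^{(a)}\$)^{j}<\tfrac12$; then $\cM_a$ is a $\tfrac12$-contraction mapping the ball of radius $1+2C_0\|g\|_{L^2}$ of $\cD^{\gamma,\sigma}_{p,\infty}$ into itself, and the contraction mapping principle produces the unique fixed point $\cG^{a,b}g$ in $\cD^{\gamma,\sigma}_{p,\infty}$, with $\$\cG^{a,b}g\$_{\gamma,\sigma}\lesssim\|g\|_{L^2}$ obtained by reinjecting this bound into the fixed-point identity. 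For \eqref{Eq:ContBnd} I would subtract the two fixed-point identities, built from $(\Pi^{(a)},\Gamma^{(a)})$ and $(\bar\Pi^{(a)},\bar\Gamma^{(a)})$ respectively, estimating the difference $\cK^{(a)}g-\bar\cK^{(a)}\bar g$ by $\lesssim\|g-\bar g\|_{L^2}$, the $f\cdot\Xi$ and $b\,\cK^{(a)}f$ differences by the two-model bounds above (using again $\$\bar\cG^{a,\bar b}\bar g\$\lesssim\|\bar g\|_{L^2}$), and the $b$-discrepancy by $|b-\bar b|\,\$\bar\cK^{(a)}\bar\cG^{a,\bar b}\bar g\$\lesssim|b-\bar b|\,\|\bar g\|_{L^2}$. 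Using once more $a^{-c}\$\Pi^{(a)}\$(1+\$\Gamma^{(a)}\$)^{j}<\tfrac12$ to absorb the $\$\cG^{a,b}g;\bar\cG^{a,\bar b}\bar g\$$ term on the right-hand side, one is left with precisely \eqref{Eq:ContBnd}.
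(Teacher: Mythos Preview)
Your proposal is correct and follows essentially the same route as the paper: show that $\cM_a$ maps $\cD^{\gamma,\sigma}_{p,\infty}$ to itself via Proposition~\ref{Prop:ConvolClassical} and Theorem~\ref{Th:ConvolAbs}, extract a factor $a^{-c}$ from the surplus regularity, and run the contraction mapping principle; the two-model bound is obtained by subtracting the fixed-point identities. The paper packages your ``key estimate'' as the single observation that the embedding $\cD^{\gamma+\kappa,\sigma+\kappa}_{p,\infty}\hookrightarrow\cD^{\gamma,\sigma}_{p,\infty}$ has norm $\lesssim a^{-\kappa/2}$, and then quotes it three times.

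One small imprecision: you write that truncation ``only affects the coherence term'', but the change of weight $\sigma_1\to\sigma$ affects the \emph{local} terms as well. This is harmless --- since by definition $|x|_P=\tfrac{1}{\sqrt a}\wedge\mathrm{dist}(x,P)\le 1/\sqrt a$, one has $|x|_P^{\sigma_1-\sigma}\le a^{-(\sigma_1-\sigma)/2}$, which is exactly the needed small factor --- but it is worth stating, since this built-in bound $|x|_P\le 1/\sqrt a$ is what makes the whole embedding argument work (not only the restriction $|h|<1/\sqrt a$). Your extra $(1+\$\Gamma\$)^j$ and the lower bound on $\$\Pi\$$ are more cautious than the paper, which simply absorbs any such factor into the choice of $A(a)$; either way the conclusion is the same.
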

\begin{remark}
We consider the additional parameter $b$ in order to construct all the resolvent operators associated to $a+b$, $b\in [-2,2]$, with the sole model constructed with the kernel $P^{(a)}_+$. We refer to Subsection \ref{Subsection:Resolv} for more details.
\end{remark}
An important observation, which we will use several times in the proof, is that the embedding of $\cD^{\gamma+\kappa,\sigma+\kappa}_{p,\infty}$ into $\cD^{\gamma,\sigma}_{p,\infty}$ has a norm of order $a^{-\kappa/2}$ uniformly over all $a\ge 1$.
\begin{proof}
Let $g\in L^2$. From classical embedding theorems, $g\in \cB^{-d(\frac12 - \frac1{p})}_{p,\infty}$. By Proposition \ref{Prop:ConvolClassical} we know that $\cK^{(a)} g \in \cD^{\gamma,\sigma}_{p,\infty}$ and the following bound holds true
$$ \$ \cK^{(a)} g\$_{\gamma,\sigma} \lesssim a^{-\kappa/2}  \| g \|_{L^2}\;.$$
If $f\in \cD^{\gamma,\sigma}_{p,\infty}$, it is elementary to check that $f\cdot\Xi \in \cD^{\gamma-\frac{d}{2}-\kappa,\sigma-\frac{d}{2}-\kappa}_{p,\infty}$. Applying Theorem \ref{Th:ConvolAbs} we also have $\cK^{(a)} (f\cdot\Xi) \in \cD^{\gamma,\sigma}_{p,\infty}$ and furthermore
$$ \$ \cKa(f\cdot\Xi)\$_{\gamma,\sigma} \lesssim a^{-\kappa/2}\$ f \$_{\gamma,\sigma} \$\Pi\$ (1+\$\Gamma\$)\;.$$
Similarly, $\cK^{(a)} f \in \cD^{\gamma,\sigma}_{p,\infty}$ and
$$ \$ \cKa f\$_{\gamma,\sigma} \lesssim a^{-\kappa/2} \$ f \$_{\gamma,\sigma} \$\Pi\$ (1+\$\Gamma\$)\;.$$
Using the linearity of the map $\cM_a$, we deduce that there exists a constant $C>0$ independent of everything such that
$$ \$\cM_a f - \cM_a \bar f\$_{\gamma,\sigma} \le C a^{-\kappa/2} \$\Pi\$ (1+\$\Gamma\$) \$ f-\bar f\$_{\gamma,\sigma}\;,$$
uniformly over all $f,\bar f \in \cD^{\gamma,\sigma}_{p,\infty}$ and over all $a,L \ge 1$. Choosing $A(a)$ small enough, we deduce that $\cM_a$ is a contraction on $\cD^{\gamma,\sigma}_{p,\infty}$ so that it admits a unique fixed point $f$.\\
In the case where we are given two models, we let $f$ and $\bar f$ be the corresponding fixed points. Since $\cM^a f=f$ and $\bar\cM^a \bar f = \bar f$ we have
$$ \$ f ;  \bar f\$ \le \$\cK^{(a)} (g-\bar g)\$ + \$\cK^{(a)}(f\cdot\Xi) ; \cK^{(a)}(\bar f\cdot\Xi) \$ +   b \$ \cK^{(a)} f ; \cK^{(a)} \bar f \$ + |b-\bar b|\$\cK^{(a)} \bar f \$ \;,$$
so that, using the bound of Theorem \ref{Th:ConvolAbs} in the case of two models, we deduce that choosing $A(a)$ small enough the asserted result holds true.
\end{proof}
\begin{remark}[Periodic]
In the periodic case, the proof is substantially different. Indeed, Theorem \ref{Th:ConvolAbs} only gives contractivity at integer levels in the regularity structure. However, a careful inspection of the relationship between the coefficients of $\cM_a f$ and $f$ shows that if one iterates $k$ times the map $\cM_a$ (with $k=2$ in dimension $1$ and $2$, and $k=4$ in dimension $3$) then the coefficients at non-integer levels of $\cM_a^k f$ coincide with coefficients at integer levels of $\cM_a f$, and we get contractivity.
\end{remark}
Let us observe that the fixed point of the last proposition takes the generic form
$$ f(x) =\sum_{\tau \in \cU_{<\gamma}} f_\tau(x) \,\tau\;,\quad x\in (-L,L)^d\;,$$
and that the coefficients $f_\tau$ for non-polynomials $\tau$ take values in $\{\pm f_\un, \pm f_{X_i}\}$. In particular, in dimension $3$ we have
$$ -f_{\cI(\Xi)} = f_{\cI(\Xi\cI(\Xi))} = - f_{\cI(\Xi\cI(\Xi\cI(\Xi)))} = f_{\un}\;,\quad -f_{\cI(X_i \Xi)}  = f_{X_i}\;.$$

\section{The Anderson hamiltonian}\label{Sec:Anderson}

In this section, we apply the previous analytical results to a specific sequence of models based on white noise called the renormalised model: its construction is recalled in the first subsection.

\subsection{The renormalised model}\label{Sec:Models}

Let $\xi$ be a white noise on $\R^d$ (in the periodic case: one imposes this white noise to have the periodicity of the underlying domain). Fix some smooth, compactly supported, even function $\varrho$ integrating to $1$, and set $\varrho_\epsilon(x) := \epsilon^{-d} \varrho(x \epsilon^{-1})$ for every $x\in \R^d$. Then we consider the smooth function $\xi_\epsilon := \xi * \varrho_\epsilon$ from which we can build a model $(\Pi_\epsilon^{(a)},\Gamma_\epsilon^{(a)})$ by setting
$$ (\Pi_\epsilon^{(a)})_x \Xi(y) := \xi_\epsilon(y)\;,\quad x,y \in \R^d\;,$$
by imposing the last two identities of Definition \ref{Def:AdmModel} with the kernel $P_+^{(a)}$ as well as the recursive identities
$$(\Pi_\epsilon^{(a)})_x \tau\tau'(y) = (\Pi_\epsilon^{(a)})_x \tau(y)(\Pi_\epsilon^{(a)})_x \tau'(y)\;,\quad \tau,\tau' \in \cT_{<\gamma}\;.$$
From~\cite[Prop 8.27]{Hairer2014}, this defines a unique admissible model $(\Pi_\epsilon^{(a)},\Gamma_\epsilon^{(a)})$.

Unfortunately in dimension $2$ and $3$, the sequence does not converge as $\epsilon\downarrow 0$ and we need to renormalise the model before passing to the limit. The renormalisation constants were computed in~\cite{Hairer2014} for $d=2$, and in~\cite{HaiPar,mSHE} for $d=3$: their expressions are exactly the same here except that the kernel is $P_+^{(a)}$. For $d=2$, we take $C_\epsilon^{(a)} = c_\epsilon^{(a)}$ with
$$ c_\epsilon^{(a)} := \int P^{(a)}_+(x) \varrho_\epsilon^{*2}(x) \,dx\;,$$
where $\varrho_\epsilon^{*2} := \varrho_\epsilon*\varrho_\epsilon$. For any given $a\ge 1$, we have $C_\epsilon^{(a)} = -(2\pi)^{-1} \ln \epsilon + \cO(1)$ as $\epsilon\downarrow 0$.\\
For $d=3$, we take $C_\epsilon^{(a)} := c^{(a)}_\epsilon + c^{(a),1,1}_\epsilon + c^{(a),1,2}_\epsilon$ where
\begin{align*}
c^{(a)}_\epsilon &:= \int P^{(a)}_+(x) \varrho_\epsilon^{*2}(x) \,dx\;,\\
c^{(a),1,1}_\epsilon &:= \iiint P^{(a)}_+(x_1) P^{(a)}_+(x_2) P^{(a)}_+(x_3) \varrho_\epsilon^{*2}(x_1+x_2)\varrho_\epsilon^{*2}(x_2+x_3) \,dx_1 \,dx_2 \,dx_3\;,\\
c^{(a),1,2}_\epsilon &:= \iiint P^{(a)}_+(x_1) P^{(a)}_+(x_2) \big(P^{(a)}_+(x_3) \varrho_\epsilon^{*2}(x_3) - c_\epsilon^a \delta_0(x_3) \big)\varrho_\epsilon^{*2}(x_1+x_2+x_3) \,dx_1 \,dx_2 \,dx_3\;.
\end{align*}
Note that there exist some constants $c_\varrho, c^{1,1}_\varrho$ independent of $a$ such that for any given $a\ge 1$ as $\epsilon\downarrow 0$
$$ C^{(a)}_\epsilon = \frac{c_\varrho}{\epsilon} + c^{1,1}_\varrho \ln \epsilon + \cO(1)\;.$$
For $d=2,3$ and as $\epsilon \downarrow 0$, $C^{(a)}_\epsilon - C^{(1)}_\epsilon$ converges to a finite quantity that we denote $C^{(a)-(1)}$. Furthermore
\begin{equation}\label{Eq:BoundCaa'}
\sup_{\epsilon \in (0,1)} \big|C^{(a)}_\epsilon - C^{(a')}_\epsilon\big| \lesssim |\sqrt a - \sqrt{a'}|\;,\qquad \big|C^{(a) - (1)} - C^{(a') - (1)}\big| \lesssim |\sqrt a - \sqrt{a'}|\;,
\end{equation}
uniformly over all $a,a'\ge 1$.

The precise definition of the renormalised model requires to introduce several algrebraic objects related to the structure group: we refer the interested reader to~\cite[Section 9.1]{Hairer2014} for the case of dimension $2$ and to~\cite[Section 4.3]{HaiPar} for the case of dimension $3$ (note that in the latter reference, the SPDE at stake is actually the multiplicative SHE whose scaling behaviour is equivalent to that of our operator in dimension $3$). Let us mention that for any $x\in\R^d$ we have
$$ (\Pi_\epsilon^{(a)})_x \Xi \cI(\Xi) = -c_\epsilon^{(a)}\;,$$
in dimension $2$ and $3$, and
$$  (\Pi_\epsilon^{(a)})_x \Xi \cI(\Xi\cI(\Xi\cI(\Xi))) = -c_\epsilon^{(a),1,1} - c_\epsilon^{(a),1,2}\;,$$
in dimension $3$.\\
We conclude this subsection with a convergence result.
\begin{proposition}\label{Prop:Renorm}
For any $a\ge 1$, the sequence of renormalised models $(\Pi^{(a)}_\epsilon,\Gamma^{(a)}_\epsilon)$ converges in probability as $\epsilon\downarrow 0$ to a limit $(\Pi^{(a)},\Gamma^{(a)})$. Furthermore, there exist two constants $K,C>0$ such that for any $a,L\ge 1$ we have
\begin{equation}\label{Eq:BoundModel}
\P(\$ \Pi^{(a)} \$ (1+ \$ \Gamma^{(a)} \$) > K) \le \frac{C}{a^2}\;.
\end{equation}
\end{proposition}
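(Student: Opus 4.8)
The plan is to reduce the convergence-in-probability statement to the lattice norm $\$\Pi\$_\Lambda$ via Lemma \ref{Lemma:ModelLattice}, and then to prove the required moment bounds on $\$\Pi^{(a)}\$_\Lambda$ by adapting the standard Kolmogorov-type criterion for models in regularity structures (as in~\cite[Section 10]{Hairer2014}, \cite{HaiPar,mSHE}) to our $a$-dependent kernel $P^{(a)}_+$. The only genuinely new feature compared to those references is the quantitative control in the parameter $a$: we must show that the $q$-th moments of $\langle (\Pi^{(a)}_\epsilon)_x\tau,\varphi^n_x\rangle$ are bounded by a constant that decays in $a$ fast enough to beat the union bound over the $\cO((aL^2)^{d/2})$ lattice points and the $\cO(\log a)$ scales $n\ge n_a$, and still leave a factor $a^{-2}$.

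\textbf{Step 1: convergence.} For each fixed $a$, one constructs the candidate limiting model $(\Pi^{(a)},\Gamma^{(a)})$ and its renormalised approximations exactly as in~\cite[Section 9]{Hairer2014} (for $d=2$) and~\cite[Section 4]{HaiPar} (for $d=3$), the only change being that the kernel $K$ is replaced by $P^{(a)}_+$, which still satisfies Definition \ref{Def:Kernel}. A direct computation of the Wiener chaos decomposition of $\langle (\Pi^{(a)}_\epsilon)_x\tau - (\Pi^{(a)}_{\epsilon'})_x\tau,\varphi^n_x\rangle$ gives, via the usual graphical/BPHZ bounds, a Cauchy estimate in $\epsilon$; equivalence of moments in a fixed Wiener chaos then upgrades this to an $L^q$ bound for all $q$, and Lemma \ref{Lemma:ModelLattice} translates it into a bound on $\$\Pi^{(a)}_\epsilon - \Pi^{(a)}_{\epsilon'}\$(1+\$\cdots\$)$, whence convergence in probability. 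This part is essentially a citation with the kernel swapped; no new idea is needed.

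\textbf{Step 2: the $a$-dependent tail bound.} This is the heart of the matter. By Lemma \ref{Lemma:ModelLattice} it suffices to bound $\P(\$\Pi^{(a)}\$_\Lambda > K')$ for a suitable $K'$. We use a union bound: $\$\Pi^{(a)}\$_\Lambda$ is a supremum over $\tau\in\bigcup_\beta\cT_\beta$ (finitely many), over scales $n\ge n_a$, and over lattice points $x\in\Lambda_n\cap(-L,L)^d$ — the latter set having cardinality $\lesssim (2^n L)^d\lesssim (a^{1/2}L\,2^{n-n_a})^d$. For each fixed $(\tau,n,x)$, a Chebyshev bound with a high moment $q$ controls $\P(|\langle(\Pi^{(a)})_x\tau,\varphi^n_x\rangle| > K'\|\tau\|2^{-nd/2}2^{-n\beta})$ by $K'^{-q}$ times $\E[|\langle(\Pi^{(a)})_x\tau,\varphi^n_x\rangle|^q]/(2^{-nd/2-n\beta})^q$. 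The key analytic input is that this normalised moment decays in $a$: schematically, convolutions against $P^{(a)}_+$ produce factors that, after the natural $a$-dependent rescaling (distances measured in units of $1/\sqrt a$, cutoff at scale $2^{-n_a}$), gain a power of $a^{-1/2}$ for each integration node carrying a homogeneity surplus; since every $\tau$ of negative homogeneity entering $\$\cdot\$_\Lambda$ has strictly negative homogeneity bounded away from zero, one extracts a uniform gain $\E[|\langle(\Pi^{(a)})_x\tau,\varphi^n_x\rangle|^q] \lesssim a^{-q\theta/2}\,2^{-q n_a\theta}\cdots$ for some $\theta>0$, with the remaining $n$-dependence summable. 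Choosing $q$ large enough that $q\theta/2$ exceeds $2 + d/2$ (to absorb the lattice-point count $a^{d/2}$ and leave $a^{-2}$), summing the geometric series over $n\ge n_a$ and over lattice points, and using \eqref{Eq:BoundCaa'} to control the renormalisation constants uniformly, yields $\P(\$\Pi^{(a)}\$_\Lambda > K') \lesssim a^{-2}$, uniformly in $L\ge 1$.

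\textbf{Main obstacle.} The delicate point is \emph{tracking the exact power of $a$} through the graphical bounds for each symbol $\tau$ — in particular for the higher-order symbols in $d=3$ such as $\cI(\Xi\cI(\Xi\cI(\Xi)))$ and $\cI(\Xi X_i)$, and for the renormalised (subtracted) quantities — and checking that the gain $a^{-\theta/2}$ is \emph{strictly positive and uniform} over all $\tau$ with negative homogeneity. Concretely, one must verify that replacing the scale-$1$ cutoff of~\cite{Hairer2014} by the scale-$2^{-n_a}\simeq a^{-1/2}$ cutoff of Definition \ref{Def:Kernel}, together with the exponential decay of $P^{(a)}_-$ recorded in \eqref{Eq:BndP}, turns every bound of the form ``$\lesssim 2^{-n\beta}$'' into ``$\lesssim a^{-\theta/2} 2^{-n\beta} 2^{-(n-n_a)\theta'}$'' with $\theta,\theta'>0$; the book-keeping is routine in spirit but must be done symbol by symbol, and the case $d=3$ requires importing the generalised-convolution / labelled-graph estimates of~\cite{HaiPar,HairerQuastel} with the parameter $a$ retained. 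Once this per-symbol gain is in hand, the union bound and Lemma \ref{Lemma:ModelLattice} close the argument mechanically.
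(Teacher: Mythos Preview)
Your approach is essentially the same as the paper's: reduce to the lattice norm via Lemma~\ref{Lemma:ModelLattice}, control high moments of $\langle\Pi^{(a)}_x\tau,\varphi^n_x\rangle$ via equivalence of moments in a fixed Wiener chaos, and perform a union bound over scales $n\ge n_a$ and lattice points. The paper packages the second step as an exponential moment bound (the forthcoming Lemma~\ref{Lemma:Moments}): there exist $\lambda,\nu>0$ such that $\sup_{a,L} L^{-d}\,\E\big[\exp(\lambda a^\nu\,\$\Pi^{(a)}\tau\$_\Lambda^{2/\|\tau\|})\big]<\infty$, from which \eqref{Eq:BoundModel} follows by Markov.

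One clarification on your ``main obstacle'': the mechanism for the gain in $a$ is simpler than you anticipate, and your stated per-scale bound is not quite right. The second moment estimate
\[
\E\big[\langle(\Pi^{(a)})_x\tau,\varphi^n_x\rangle^2\big]\le C\,2^{-n(d+2|\tau|+\delta)}
\]
holds with $C$ \emph{independent of $a$}, because the bounds \eqref{Eq:BndP} on $P^{(a)}_n$ are uniform in $a$; no symbol-by-symbol tracking of $a$-powers through the graphical estimates is needed. The gain in $a$ comes entirely from the restriction $n\ge n_a$ built into the norm: when one sums over scales and lattice points, $\sum_{n\ge n_a} 2^{nd}\,2^{-n\delta p/\|\tau\|}\sim 2^{-n_a(\delta p/\|\tau\|-d)}\sim a^{-(\delta p/(2\|\tau\|)-d/2)}$, which for $p$ large yields an arbitrarily high negative power of $a$. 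So your claimed bound $\E[|\langle(\Pi^{(a)})_x\tau,\varphi^n_x\rangle|^q]\lesssim a^{-q\theta/2}\cdot(\ldots)$ at a \emph{fixed} scale $n\ge n_a$ is incorrect as written --- there is no gain there --- but once the $a$-gain is relocated to the summation step the argument closes exactly as you outline.
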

\begin{proof}
The convergence of the sequence of models for any given $a\ge 1$ is already proved in~\cite{Hairer2014,HaiPar}: indeed, the only requirement therein - translated into our context - is that the kernel $P^{(a)}_+$ coincides in a neighbourhood of the origin with the Green's function of $-\Delta+a$ and this is the case in our setting.\\
To establish \eqref{Eq:BoundModel}, we first observe that by Lemma \ref{Lemma:ModelLattice} it suffices to control the tail of the distribution of $\$ \Pi^{(a)} \Latt$. The norm of symbols containing at least one instance of $\Xi$ are bounded by the forthcoming Lemma \ref{Lemma:Moments} from which it is then simple to get \eqref{Eq:BoundModel}.
\end{proof}

\subsection{The resolvents}\label{Subsection:Resolv}

In this subsection, we deal with the collection of renormalised models $(\Pi^{(a)}_\epsilon,\Gamma^{(a)}_\epsilon)$, $\epsilon \in (0,1)$, and the limiting model $(\Pi^{(a)},\Gamma^{(a)})$ introduced in the previous subsection. Since convergence along subsequences of $\epsilon$ of the renormalised models towards the limiting model holds up to a $\P$-null set depending on $a$, we cannot deal simultaneously with all models indexed by $a \ge 1$. Instead, we restrict ourselves to models indexed by $m\in\N$.\\

We introduce the random sets
$$ \ccA := \Big\{ m+b+C^{(m) - (1)}: m\ge 1, b\in (-2,2) \mbox{ s.t. }\$ \Pi^{(m)} \$ (1+ \$ \Gamma^{(m)} \$) < \frac12 A(m)\Big\}\;,$$
and
$$ \ccA_\epsilon := \Big\{ m+b+C^{(m)}_\epsilon - C^{(1)}_\epsilon: m\ge 1, b\in (-2,2) \mbox{ s.t. }\$ \Pi^{(m)}_\epsilon \$ (1+ \$ \Gamma^{(m)}_\epsilon \$) < A(m)\Big\}\;.$$
Recall from Proposition \ref{Prop:Resolv} and \eqref{Eq:BoundCaa'} that $A(m) \to \infty$ and $C^{(m) - (1)} = o(m)$ as $m\to\infty$. Combining the bound \eqref{Eq:BoundModel} with the Borel-Cantelli Lemma, we deduce that $\ccA$ is almost surely non empty, bounded from the left and unbounded from the right.\\

For every $a\in \ccA$ (resp. $a\in \ccA_\epsilon$), we apply the fixed point result of Subsection \ref{Subsec:Fixed} and define the operators
$$ G^a g := \cR \cG^{m,b} g\;,\quad G^a_{\epsilon} g := \cR_\epsilon \cG^{m,b}_{\epsilon} g\;,\quad g\in L^2\;,$$
where $(m,b)$ is an arbitrary pair such that $a=m+b+C^{(m) - (1)}$ (resp. $a=m+b+C^{(m)}_\epsilon - C^{(1)}_\epsilon$). We will show below that this definition does not depend on the chosen pair.

\begin{proposition}\label{Prop:Compact}
Almost surely, for every $a\in \ccA$ (resp. $a\in \ccA_\epsilon$) the operator $G^{a}$ (resp. $G^a_{\epsilon}$) is invertible, compact and self-adjoint. Furthermore the following resolvent identity holds
$$ G^{a} - G^{a'} = (a'-a) G^{a'} G^{a}\;,\quad G^{a}_{\epsilon} - G^{a'}_{\epsilon} = (a'-a) G^{a'}_{\epsilon} G^{a}_{\epsilon}\;.$$
\end{proposition}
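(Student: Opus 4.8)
\emph{The regularised operators.} The plan is to show that, for each fixed $\epsilon$, $G^a_\epsilon$ is the resolvent of a genuine Schr\"odinger operator, and then to pass to the limit. Fix $\epsilon>0$, let $a\in\ccA_\epsilon$ and choose $(m,b)$ with $a=m+b+C^{(m)}_\epsilon-C^{(1)}_\epsilon$; set $u_\epsilon:=\cR_\epsilon\cG^{m,b}_\epsilon g$. Applying $\cR_\epsilon$ to the fixed point identity defining $\cG^{m,b}_\epsilon g$ and using the convolution identity $\cR_\epsilon\cK^{(m)}h=K^{(m)}*\cR_\epsilon h$ from Theorem~\ref{Th:ConvolAbs}, one sees that $u_\epsilon$ solves $(-\Delta+m)u_\epsilon=g-\cR_\epsilon\big(\cG^{m,b}_\epsilon g\cdot\Xi\big)-b\,u_\epsilon$ on $(-L,L)^d$ with the prescribed boundary conditions. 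Now $\cG^{m,b}_\epsilon g$ carries only components along $\cU_{<\gamma}$: projecting the same identity on $\un$ gives $\big(\cG^{m,b}_\epsilon g\big)_\un=\cR_\epsilon\cG^{m,b}_\epsilon g=u_\epsilon$, and the relations recalled at the end of Subsection~\ref{Subsec:Fixed} then force the coefficients of $\cI(\Xi)$ and, in dimension $3$, of $\cI(\Xi\cI(\Xi\cI(\Xi)))$ to equal $-u_\epsilon$. Since for the renormalised model $(\Pi^{(m)}_\epsilon)_x\Xi\cI(\Xi)=-c^{(m)}_\epsilon$ and $(\Pi^{(m)}_\epsilon)_x\Xi\cI(\Xi\cI(\Xi\cI(\Xi)))=-c^{(m),1,1}_\epsilon-c^{(m),1,2}_\epsilon$, whereas every other symbol $\tau\Xi$ with $\tau\in\cU$ vanishes at the base point, we get $\cR_\epsilon\big(\cG^{m,b}_\epsilon g\cdot\Xi\big)=u_\epsilon\,\xi_\epsilon+C^{(m)}_\epsilon u_\epsilon$. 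Substituting and recalling $a=m+b+C^{(m)}_\epsilon-C^{(1)}_\epsilon$, we conclude that $u_\epsilon$ lies in the domain of $\cH_\epsilon:=-\Delta+\xi_\epsilon+C^{(1)}_\epsilon$ (by elliptic regularity) and solves $(\cH_\epsilon+a)u_\epsilon=g$. As $g\in L^2$ was arbitrary, $\cH_\epsilon+a$ is surjective; being $-\Delta$ (with Dirichlet or periodic b.c.\ on a bounded box) perturbed by the bounded real potential $\xi_\epsilon+C^{(1)}_\epsilon$, it is self-adjoint, hence injective as well, and therefore $G^a_\epsilon=(\cH_\epsilon+a)^{-1}$. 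This expression depends on $(m,b)$ only through $a$, so $G^a_\epsilon$ is well defined; moreover it is self-adjoint, compact and invertible, and the $G^a_\epsilon$ obey the resolvent identity. Finally, Proposition~\ref{Prop:Resolv} together with Theorem~\ref{Th:Recons} gives $\|G^a_\epsilon\|_{L^2\to L^2}\lesssim 1$ uniformly in $\epsilon$ (for $a$, $L$ fixed).

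\emph{Passage to the limit.} By Proposition~\ref{Prop:Renorm} the renormalised models converge in probability for every $m\in\N$; a diagonal extraction produces $\epsilon_j\downarrow 0$ along which $(\Pi^{(m)}_{\epsilon_j},\Gamma^{(m)}_{\epsilon_j})\to(\Pi^{(m)},\Gamma^{(m)})$ almost surely for all $m$ at once, and we argue along this sequence. Fix $a\in\ccA$ and a decomposition $a=m+b+C^{(m)-(1)}$. By \eqref{Eq:BoundCaa'} and convergence of the model norms, $a_j:=m+b+C^{(m)}_{\epsilon_j}-C^{(1)}_{\epsilon_j}\in\ccA_{\epsilon_j}$ for $j$ large and $a_j\to a$. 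Applying the continuity estimate \eqref{Eq:ContBnd} (with the same $g,b$ and only the model varying), the two-model reconstruction bound of Theorem~\ref{Th:Recons}, and the continuous embedding $\cB^{\bar\alpha}_{p,\infty}((-L,L)^d)\hookrightarrow L^2$ (true since $p\ge 2$ on a bounded box), we obtain
$$ \big\|\cR_{\epsilon_j}\cG^{m,b}_{\epsilon_j}g-\cR\cG^{m,b}g\big\|_{L^2}\lesssim\big(\$\Pi^{(m)}_{\epsilon_j}-\Pi^{(m)}\$\,(1+\$\Gamma^{(m)}\$)+\$\Pi^{(m)}\$\,\$\Gamma^{(m)}_{\epsilon_j}-\Gamma^{(m)}\$\big)\,\|g\|_{L^2}\;, $$
uniformly in $g\in L^2$. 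Hence $G^{a_j}_{\epsilon_j}\to G^a$ in operator norm, so $G^a$ is self-adjoint and compact. If $a$ admits two decompositions $(m_i,b_i)$, the associated labels $a^{(i)}_j\to a$ and the resolvent identity at level $\epsilon_j$ gives $\|G^{a^{(1)}_j}_{\epsilon_j}-G^{a^{(2)}_j}_{\epsilon_j}\|=|a^{(1)}_j-a^{(2)}_j|\,\|G^{a^{(2)}_j}_{\epsilon_j}G^{a^{(1)}_j}_{\epsilon_j}\|\to 0$ by the uniform bound, so the two limits agree and $G^a$ is well defined. Passing to the limit in $G^{a_j}_{\epsilon_j}-G^{a'_j}_{\epsilon_j}=(a'_j-a_j)G^{a'_j}_{\epsilon_j}G^{a_j}_{\epsilon_j}$ along a common subsequence yields the resolvent identity for $G^a,G^{a'}$.

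\emph{Invertibility, and the main difficulty.} Injectivity of $G^a$ does not follow from the limiting procedure and has to be established directly. Suppose $G^a g=\cR f=0$ with $f:=\cG^{m,b}g$. Projecting $f=\cM_a f$ on $\un$ and using $\cR\cK^{(m)}h=K^{(m)}*\cR h$ gives $f_\un=\cR f=0$, and then, since $\cR f=0$, that $g=\cR(f\cdot\Xi)$ by injectivity of the (Dirichlet or periodic) Green operator; projecting on the $X_i$, and using that when $f_\un=0$ all components of $f$ and of $f\cdot\Xi$ of homogeneity $\le -1$ vanish, yields $f_{X_i}=\partial_i\big(K^{(m)}*(g-\cR(f\cdot\Xi))\big)=0$. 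By the relations of Subsection~\ref{Subsec:Fixed} every remaining component of $f$ then vanishes, so $f=0$ and $g=\cR(f\cdot\Xi)=0$; thus $G^a$ is injective. The main obstacle in this proof is the identification of $G^a_\epsilon$ with $(\cH_\epsilon+a)^{-1}$ carried out in the first step: one has to track carefully how the renormalisation constants of the model contribute to the reconstructed product $\cR_\epsilon(\cG^{m,b}_\epsilon g\cdot\Xi)$, so that they collapse to exactly $C^{(m)}_\epsilon u_\epsilon$ and $G^a_\epsilon$ becomes an honest resolvent; the injectivity of the limiting operator is the other non-automatic point.
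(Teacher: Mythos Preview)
Your proof is correct and follows essentially the same architecture as the paper: identify $G^a_\epsilon$ as the resolvent of $-\Delta+\xi_\epsilon+C^{(1)}_\epsilon+a$ (the paper's Lemma~\ref{Lemma:ResolvId}), then pass self-adjointness, the resolvent identity and well-definedness to the limit via the continuity bound~\eqref{Eq:ContBnd}, and prove injectivity of $G^a$ directly from the structure of the fixed point. The only tactical differences are that the paper obtains compactness of $G^a$ \emph{directly} from the Besov regularity of $\cR\cG^{m,b}g$ (Lemma~\ref{Lemma:Compact}) rather than as an operator-norm limit of compact operators, and that for injectivity the paper kills the $X_i$-components by invoking the bijectivity of reconstruction on the polynomial regularity structure, whereas you compute the $X_i$-projection of $\cK^{(m)}$ explicitly; both routes are valid.
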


The proof relies on two intermediate lemmas. For every $m\in\N$, we introduce the following events
$$ E_m = \Big\{\omega\in\Omega: \$ \Pi^{(m)} \$ (1+ \$ \Gamma^{(m)} \$) < \frac12 A(m) \Big\}\;,\quad E_{m,\epsilon} = \Big\{\omega\in\Omega: \$ \Pi^{(m)}_\epsilon \$ (1+ \$ \Gamma^{(m)}_\epsilon \$) <  A(m) \Big\}\;.$$
On the event $E_m$ and for every $b\in (-2,2)$, we introduce the operator
$$ G^{m,b} g := \cR \cG^{m,b} g\;,\quad g\in L^2\;,$$
and similarly with the regularised model.

\begin{lemma}\label{Lemma:Compact}
On the event $E_m$ (resp. $E_{m,\epsilon}$) and for every $b\in (-2,2)$, the operator $G^{m,b}$ (resp. $G^{m,b}_\epsilon$) is invertible and compact.
\end{lemma}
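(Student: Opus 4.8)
The plan is to recognise $G^{m,b}$ as the resolvent-type operator $(\mathcal H^{m} + b)^{-1}$ associated with the renormalised Anderson hamiltonian built from the model $(\Pi^{(m)},\Gamma^{(m)})$, and to derive its compactness from the smoothing property of the abstract convolution together with the compactness of Besov embeddings. First I would fix $\omega \in E_m$ and $b\in(-2,2)$, so that Proposition \ref{Prop:Resolv} applies with $a=m$ (its hypothesis $\$\Pi^{(m)}\$(1+\$\Gamma^{(m)}\$)<A(m)$ being guaranteed by the event $E_m$, in fact with room to spare): the fixed point $\mathcal G^{m,b}g$ exists, is unique in $\mathcal D^{\gamma,\sigma}_{p,\infty}$, and by linearity of $\mathcal M_m$ the map $g\mapsto \mathcal G^{m,b}g$ is linear and bounded from $L^2$ into $\mathcal D^{\gamma,\sigma}_{p,\infty}$, with operator norm controlled by the bound \eqref{Eq:ContBnd} (taking $\bar g=0$, $\bar b=b$). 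Composing with the reconstruction operator $\mathcal R$, which by Theorem \ref{Th:Recons} is continuous from $\mathcal D^{\gamma,\sigma}_{p,\infty}$ into $\mathcal B^{\bar\alpha}_{p,\infty}(\mathbb R^d)$ (here $\bar\alpha = \alpha\wedge\sigma$, and with $\gamma=2-d(\tfrac12-\tfrac1p)-\kappa$, $\sigma=1-\kappa$ the relevant index is strictly positive), we conclude that $G^{m,b}=\mathcal R\circ\mathcal G^{m,b}$ maps $L^2$ continuously into $\mathcal B^{\bar\alpha}_{p,\infty}((-L,L)^d)$ with some positive regularity index $\bar\alpha>0$, and that the image functions are supported in $[-L,L]^d$.

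Compactness then follows from the Rellich--Kondrachov-type compact embedding of $\mathcal B^{\bar\alpha}_{p,\infty}(D)$ into $L^2(D)$ on the bounded domain $D=(-L,L)^d$: since $\bar\alpha>0$ and $p\ge 2$, this embedding is compact, hence $G^{m,b}:L^2\to L^2$ is the composition of a bounded operator with a compact one and is therefore compact. I would note that in $d=3$ we work with $p$ slightly above $2$ rather than $p=2$; this does not affect the argument since $\mathcal B^{\bar\alpha}_{p,\infty}(D) \hookrightarrow L^p(D) \hookrightarrow L^2(D)$ compactly on the bounded domain $D$ still holds for $\bar\alpha>0$, $p>2$. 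The same chain of reasoning, verbatim, applies to the regularised model on the event $E_{m,\epsilon}$, giving compactness of $G^{m,b}_\epsilon$; in that case one also knows directly that $G^{m,b}_\epsilon = (-\Delta + \xi_\epsilon + C^{(m)}_\epsilon + b)^{-1}$ is the genuine resolvent of a Schrödinger operator with smooth potential, which gives an independent check.

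For invertibility, the point is that $G^{m,b}$ is injective and has dense range, so that $0$ is not an eigenvalue and $(G^{m,b})^{-1}$ is a well-defined (unbounded) operator. Injectivity: if $G^{m,b}g=0$, then $\mathcal R\mathcal G^{m,b}g=0$; using the reconstruction identity $\mathcal R\mathcal K^{(m)}h = K^{(m)}*\mathcal R h$ from Theorem \ref{Th:ConvolAbs} together with the fixed point equation $\mathcal G^{m,b}g = \mathcal K^{(m)}g - \mathcal K^{(m)}(\mathcal G^{m,b}g\cdot\Xi) - b\,\mathcal K^{(m)}\mathcal G^{m,b}g$, applying $\mathcal R$ and then $(-\Delta+m)$ gives, in the sense of distributions, $g = (-\Delta+m)\mathcal R\mathcal G^{m,b}g + \mathcal R(\mathcal G^{m,b}g\cdot\Xi) + b\,\mathcal R\mathcal G^{m,b}g$, so $\mathcal R\mathcal G^{m,b}g=0$ forces $g=0$. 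Density of range (equivalently, that $(G^{m,b})^*=G^{m,b}$ is injective) follows once self-adjointness is established — here I would invoke the self-adjointness part of Proposition \ref{Prop:Compact}, which is where the resolvent identity of that proposition is used; alternatively one observes that the range of $G^{m,b}$ contains $\mathcal R\mathcal K^{(m)}(L^2)$, a dense subspace of $L^2$. The main obstacle I anticipate is the careful bookkeeping in the injectivity step: one must legitimately apply $(-\Delta+m)$ to $\mathcal R\mathcal G^{m,b}g$ and justify that the distributional identity above really holds, which requires knowing that $\mathcal R\mathcal G^{m,b}g$ lies in the domain where $\mathcal H^m$ acts — this is exactly the content built into the fixed point construction, but spelling it out cleanly (rather than the formal manipulation) is the delicate part.
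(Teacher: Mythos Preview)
Your compactness argument is essentially the paper's: both use that $\mathcal{R}\circ\mathcal{G}^{m,b}$ maps $L^2$ boundedly into a Besov space of strictly positive regularity supported in the bounded domain, and conclude via a compact embedding into $L^2$.

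The injectivity argument, however, has a genuine gap. From the identity you write,
\[
g \;=\; (-\Delta+m)\,\mathcal{R}\mathcal{G}^{m,b}g \;+\; \mathcal{R}\bigl(\mathcal{G}^{m,b}g\cdot\Xi\bigr) \;+\; b\,\mathcal{R}\mathcal{G}^{m,b}g\;,
\]
setting $\mathcal{R}\mathcal{G}^{m,b}g=0$ only yields $g=\mathcal{R}(\mathcal{G}^{m,b}g\cdot\Xi)$, and you have no reason to conclude that this term vanishes. The reconstruction of the abstract product $f\cdot\Xi$ is \emph{not} a functional of $\mathcal{R}f$ alone: for the limiting model, $\mathcal{R}(f\cdot\Xi)$ depends on the full modelled distribution $f$, in particular on its coefficients on the non-polynomial symbols, and there is no identity of the type $\mathcal{R}(f\cdot\Xi)=(\mathcal{R}f)\cdot\xi+\text{correction}$ available. (For the regularised model one does have $\mathcal{R}_\epsilon(f\cdot\Xi)(x)=\mathcal{R}_\epsilon f(x)\,(\xi_\epsilon(x)+C^{(m)}_\epsilon)$, so your manipulation would go through for $G^{m,b}_\epsilon$ --- but not in the limit, which is the case of interest.)

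The paper's route is structurally different. From $\mathcal{R}\mathcal{G}^{m,b}g=0$ one first gets $\langle\mathcal{G}^{m,b}g,\mathbf{1}\rangle=0$, and then exploits the explicit relations between the coefficients of the fixed point (all non-polynomial coefficients equal $\pm f_{\mathbf{1}}$ or $\pm f_{X_i}$, see the discussion below Proposition~\ref{Prop:Resolv}) to deduce that $\mathcal{Q}_{<\gamma'}\mathcal{G}^{m,b}g$ with $\gamma'=1+\kappa$ takes values only in the polynomial part of the structure. On the polynomial regularity structure the reconstruction is a bijection onto $\mathcal{B}^{\gamma'}_{p,\infty}$, so $\mathcal{Q}_{<\gamma'}\mathcal{G}^{m,b}g=0$; the same coefficient relations then force the entire modelled distribution $\mathcal{G}^{m,b}g$ to vanish. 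Only at that point does the fixed point equation give $\mathcal{K}^{(m)}g=0$, hence $g=0$. This is the ``careful bookkeeping'' you anticipated, but it is not a matter of making your formal manipulation rigorous --- it is a different argument, because the formal manipulation does not close.

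A minor remark: the lemma only asserts invertibility from $L^2$ onto the range of $G^{m,b}$; density of the range is not needed here (self-adjointness is handled separately, via Lemma~\ref{Lemma:ResolvId} and a limiting argument). Invoking Proposition~\ref{Prop:Compact} for it would in any case be circular, since that proposition relies on the present lemma.
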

\begin{proof}
The proof is the same for $G^{m,b}$ and $G^{m,b}_\epsilon$, so we only consider the first operator.
For all $g\in L^2$, $G^{m,b} g = \cR \cG^{m,b} g \in \cB^{2-\frac{d}{2}-\kappa}_{p,\infty}$. In dimension $d=3$, $p$ is larger than $2$ but classical embedding theorems yield that the latter space is continuously embedded into $\cB^{1/2-2\kappa}_{2,2}$. In any cases, $G^{m,b}$ takes values in a compact subspace of $L^2$ so we deduce that $G^{m,b}$ is a compact operator from $L^2$ into itself.

We now prove injectivity of $G^{m,b}$. Let $g\in L^2$ be such that $G^{m,b} g = 0$. This implies that $\langle \cG^{m,b} g, \un\rangle =0$. The fixed point identity satisfied by $\cG^{m,b} g$ implies that its coefficients at non-integer levels equal those at integer levels (up to changes of signs), see the discussion below Proposition \ref{Prop:Resolv}. In particular, if we take $\gamma' = 1+\kappa$, then the restriction of $\cG^{m,b} g$ to $\cT_{<\gamma'}$ has non-zero contributions only at the $X_i$'s:
$$\cQ_{<\gamma'} \cG^{m,b} g = \sum_{i=1}^d \langle \cG^{m,b} g, X_i\rangle X_i\;.$$
Since the reconstruction operator is a bijection between $\cD^{\gamma'}_{p,\infty}(\bar{\cT})$ (the restriction of $\cD^{\gamma'}_{p,\infty}$ to the polynomial regularity structure) and $\cB^{\gamma'}_{p,\infty}$, see~\cite[Prop. 3.4]{Recons}, and since away from the hyperplane $P$ the space $\cD^{\gamma',\sigma}_{p,\infty}$ is locally identical to $\cD^{\gamma'}_{p,\infty}$ we deduce that $\cQ_{<\gamma'} \cG^{m,b} g = 0$. Using again the relationship between the projections of $\cG^{m,b} g$ on integer and non-integer levels, we deduce that $\cG^{m,b} g = 0$. Plugging this into the fixed point equation
$$ \cG^{m,b} g = \cK^{(m)} g - \cK^{(m)}(\cG^{m,b} g\cdot\Xi) - b\,\cK^{(m)}(\cG^{m,b} g)\;,$$
we deduce that $\cK^{(m)} g = 0$ which in turn ensures that $g=0$, thus concluding the proof of the injectivity of $G^{m,b}$. Consequently, $G^{m,b}$ is invertible from $L^2$ into its range.
\end{proof}
\begin{remark}\label{Remark:Embedding}
In this proof, we relied on the continuous embedding of $L^p$ into $L^2$. Note that the norm of this embedding grows with $L$ like $L^{d(1-\frac{2}{p})}$.
\end{remark}
\begin{lemma}\label{Lemma:ResolvId}
On the event $E_{m,\epsilon}$ and for every $b\in (-2,2)$, the operator $G^{m,b}_{\epsilon}$ is self-adjoint and its inverse is given by
$$ -\Delta + m + b + \xi_{\epsilon} + C_{\epsilon}^{(m)}\;.$$
Furthermore, on the event $E_{m,\epsilon} \cap E_{m',\epsilon}$ it satisfies the following resolvent identity
\begin{equation}\label{Eq:ResolvEps}
G^{m,b}_{\epsilon} - G^{m',b'}_{\epsilon} = (m'+b'-m-b + C^{(m')}_\epsilon - C^{(1)}_\epsilon - C^{(m)}_\epsilon + C^{(1)}_\epsilon) G^{m',b'}_{\epsilon} G^{m,b}_{\epsilon}\;.
\end{equation}
\end{lemma}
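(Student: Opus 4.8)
The plan is to exploit the fact that, for fixed $\epsilon>0$, the noise $\xi_\epsilon$ is a smooth (hence bounded) function, so that the regularity structure machinery merely reproduces a classical elliptic boundary value problem, and to read off the renormalisation constant $C^{(m)}_\epsilon$ from the explicit form of the renormalised model recalled in Subsection \ref{Sec:Models}.

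First I would start from the abstract fixed point equation $\cG^{m,b}_\epsilon g = \cK^{(m)} g - \cK^{(m)}(\cG^{m,b}_\epsilon g\cdot\Xi) - b\,\cK^{(m)}(\cG^{m,b}_\epsilon g)$, apply the reconstruction operator $\cR_\epsilon$ of the regularised model, and invoke the identity $\cR_\epsilon\cK^{(m)} h = K^{(m)} * \cR_\epsilon h$ from Theorem \ref{Th:ConvolAbs} (together with Proposition \ref{Prop:ConvolClassical}, which identifies $\cR_\epsilon\cK^{(m)} g$ with $K^{(m)} * g$ since $g$ is a classical distribution lifted through its Taylor expansion). Writing $u:=G^{m,b}_\epsilon g=\cR_\epsilon\cG^{m,b}_\epsilon g$, this yields $u = K^{(m)} * g - K^{(m)} * \cR_\epsilon(\cG^{m,b}_\epsilon g\cdot\Xi) - b\,K^{(m)} * u$.

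The key step, and the one I expect to be the main obstacle, is the identity $\cR_\epsilon(\cG^{m,b}_\epsilon g\cdot\Xi) = u\,\xi_\epsilon + C^{(m)}_\epsilon\,u$. Because the model is built from the smooth function $\xi_\epsilon$, reconstruction is pointwise diagonal evaluation, $\cR_\epsilon h(y) = \big((\Pi^{(m)}_\epsilon)_y h(y)\big)(y)$, so one expands $\cG^{m,b}_\epsilon g\cdot\Xi = \sum_{\tau\in\cU_{<\gamma}} f_\tau\,\tau\Xi$ and evaluates each $(\Pi^{(m)}_\epsilon)_y(\tau\Xi)$ on the diagonal. After the Taylor subtractions entering the definition of $\cI$, all polynomial symbols as well as $\cI(\Xi\cI(\Xi))$ and $\cI(X_i\Xi)$ contribute a quantity that vanishes on the diagonal, \emph{except} $\un$, which produces $u\,\xi_\epsilon$; the remaining non-zero contributions come precisely from the symbols carrying a renormalisation constant. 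Using $(\Pi^{(m)}_\epsilon)_x\Xi\cI(\Xi) = -c^{(m)}_\epsilon$ (for $d=2,3$) and $(\Pi^{(m)}_\epsilon)_x\Xi\cI(\Xi\cI(\Xi\cI(\Xi))) = -c^{(m),1,1}_\epsilon - c^{(m),1,2}_\epsilon$ (for $d=3$), combined with the sign relations $f_{\cI(\Xi)} = -f_{\un} = -u$ and $f_{\cI(\Xi\cI(\Xi\cI(\Xi)))} = -f_{\un} = -u$ from the discussion below Proposition \ref{Prop:Resolv}, one obtains the extra contributions $u\,c^{(m)}_\epsilon$ and $u\,(c^{(m),1,1}_\epsilon + c^{(m),1,2}_\epsilon)$, whose sum is $C^{(m)}_\epsilon u$ (the symbol list being shorter in low dimension, with $C^{(m)}_\epsilon = c^{(m)}_\epsilon$ for $d=2$ and $C^{(m)}_\epsilon = 0$ for $d=1$). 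This part requires careful bookkeeping of the action of the renormalisation on the handful of relevant symbols and of the sign pattern of the fixed-point coefficients.

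Plugging this in, $u$ solves $u = K^{(m)} * \big(g - u\,\xi_\epsilon - C^{(m)}_\epsilon u - b\,u\big)$ with $K^{(m)}$ the Dirichlet Green's function of $-\Delta+m$ on $(-L,L)^d$; moreover $\cR_\epsilon\cG^{m,b}_\epsilon g$ vanishes outside $(-L,L)^d$ and, thanks to the weight exponent $\sigma=1-\kappa>0$ and the boundary reconstruction bound \eqref{Eq:BoundReconsBdry}, vanishes on $P$. Since $\xi_\epsilon$ is smooth, standard elliptic regularity then places $u$ in the Dirichlet domain $H^2\cap H^1_0$ and shows that $(-\Delta + m + b + \xi_\epsilon + C^{(m)}_\epsilon)\,u = g$; hence $G^{m,b}_\epsilon$ is a right inverse on $L^2$ of the Dirichlet realisation $T_\epsilon := -\Delta + m + b + \xi_\epsilon + C^{(m)}_\epsilon$. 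As $T_\epsilon$ is a bounded symmetric perturbation of the self-adjoint Dirichlet Laplacian it is self-adjoint; being surjective it is injective, hence a bijection from its domain onto $L^2$, so $G^{m,b}_\epsilon = T_\epsilon^{-1}$, which is therefore self-adjoint (compactness being Lemma \ref{Lemma:Compact}). Finally, on $E_{m,\epsilon}\cap E_{m',\epsilon}$ both $G^{m,b}_\epsilon$ and $G^{m',b'}_\epsilon$ are inverses of Dirichlet realisations that differ by the scalar $m'+b'-m-b + C^{(m')}_\epsilon - C^{(1)}_\epsilon - C^{(m)}_\epsilon + C^{(1)}_\epsilon$, so \eqref{Eq:ResolvEps} is exactly the second resolvent identity, the two resolvents commuting because their inverses differ by a scalar.
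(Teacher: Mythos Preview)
Your proposal is correct and follows essentially the same route as the paper: apply the reconstruction operator to the fixed point equation, use the convolution identity $\cR_\epsilon\cK^{(m)} = K^{(m)}*\cR_\epsilon$ from Theorem \ref{Th:ConvolAbs}, identify $\cR_\epsilon(\cG^{m,b}_\epsilon g\cdot\Xi) = u(\xi_\epsilon+C^{(m)}_\epsilon)$, derive the PDE $(-\Delta+m+b+\xi_\epsilon+C^{(m)}_\epsilon)u=g$ with the right boundary behaviour, and conclude self-adjointness and the resolvent identity by classical means.

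The only differences are cosmetic. For the identification of $\cR_\epsilon(\cG^{m,b}_\epsilon g\cdot\Xi)$ you unpack the computation symbol by symbol, whereas the paper simply invokes \cite[Remark 3.15]{Hairer2014} together with the known action of the renormalised model on products to obtain $(\Pi^{(m)}_\epsilon)_x(\cG^{m,b}_\epsilon g\cdot\Xi)(x)=u(x)(\xi_\epsilon(x)+C^{(m)}_\epsilon)$ in one stroke; your explicit bookkeeping is fine but heavier than necessary. For self-adjointness, the paper argues directly via an integration by parts that $\langle G^{m,b}_\epsilon g,\bar g\rangle=\langle g,G^{m,b}_\epsilon\bar g\rangle$, while you first show $T_\epsilon$ is self-adjoint as a bounded symmetric perturbation of the Dirichlet Laplacian and then deduce self-adjointness of its inverse; both arguments are standard and equivalent. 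The resolvent identity is obtained identically in both cases.
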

The second part of the lemma ensures that $G^{m,b}_{\epsilon} = G^{m',b'}_{\epsilon}$ as soon as $m+b+C^{(m)}_\epsilon - C^{(1)}_\epsilon = m'+b'+ C^{(m')}_\epsilon - C^{(1)}_\epsilon$.
\begin{proof}
Let us start with self-adjointness. By Theorem \ref{Th:ConvolAbs}, we have on the event $E_{m,\epsilon}$
$$ G^{m,b}_\epsilon g = K^{(m)} * g - K^{(m)}* \cR_\epsilon(\cG^{m,b}_\epsilon g \cdot \Xi) - b\, K^{(m)} * G^{m,b}_\epsilon g\;.$$
Since $\xi_\epsilon$ is smooth, the model $\Pi^{(m)}_\epsilon$ is made of smooth functions and~\cite[Remark 3.15]{Hairer2014} shows that
$$ \cR_\epsilon(\cG^{m,b}_\epsilon g \cdot \Xi)(x) = (\Pi^{(m)}_\epsilon)_x (\cG^{m,b}_\epsilon g \cdot \Xi)(x) = G^{m,b}_\epsilon g(x)(\xi_\epsilon+C_\epsilon^{(m)})\;.$$
This yields the identity
$$ (-\Delta + m + b + \xi_\epsilon + C_\epsilon^{(m)}) G^{m,b}_\epsilon g = g\;,$$
in the sense of distributions. Furthermore $G^{m,b}_\epsilon g$ vanishes at the boundary (resp. is periodic under periodic b.c.). Therefore, for any $g,\bar{g} \in L^2$, a simple integration by parts shows that we have
\begin{align*}
\langle G^{m,b}_\epsilon g, \bar g\rangle &= \langle G^{m,b}_\epsilon g , (-\Delta + m + b + \xi_\epsilon + C_\epsilon^{(m)}) G^{m,b}_\epsilon \bar{g} \rangle_{L^2}\\
&= \langle (-\Delta + m + b + \xi_\epsilon + C_\epsilon^{(m)}) G^{m,b}_\epsilon g, G^{m,b}_\epsilon \bar{g} \rangle_{L^2}\\
&= \langle g , G^{m,b}_\epsilon \bar{g}\rangle_{L^2}\;.
\end{align*}
We turn to the resolvent identity. Since $\xi_\epsilon$ is a continuous function, it is standard to show that $-\Delta + \xi_\epsilon$ is a self-adjoint operator with domain $H^2 \cap H^1_0$ (resp. $H^2 \cap H^1_{\mbox{\tiny per}}$ in the periodic case), bounded from below and with pure point spectrum. The classical resolvent identity then yields \eqref{Eq:ResolvEps}.
\end{proof}
We now have all the ingredients at hand to prove Proposition \ref{Prop:Compact}.
\begin{proof}[Proof of Proposition \ref{Prop:Compact}]
Given Lemma \ref{Lemma:Compact}, it only remains to show that the definition of $G^a$ does not depend on the choice of the pair $(m,b)$ and that the resolvent identity holds.\\
Let $m,m'$ be such that $\P(E_m\cap E_{m'}) > 0$. From the convergence of the models stated in Proposition \ref{Prop:Renorm}, there exists a subsequence $\epsilon_k$ such that $(\Pi^{(m)}_{\epsilon_k},\Gamma^{(m)}_{\epsilon_k})$ and $(\Pi^{(m')}_{\epsilon_k},\Gamma^{(m')}_{\epsilon_k})$ converge almost surely. We now work on the event $E_m \cap E_{m,\epsilon_k} \cap E_{m'} \cap E_{m',\epsilon_k}$. Combining the resolvent identity of Lemma \ref{Lemma:ResolvId} with the continuity bound \eqref{Eq:ContBnd}, we deduce by passing to the limit on $\epsilon_k\downarrow 0$ that for $\P$-almost all $\omega \in E_m\cap E_{m'}$ we have for all $b,b'\in (-2,2)$
$$ G^{m,b} - G^{m',b'}= (m'+b'-m-b + C^{(m') - (1)} - C^{(m) - (1)}) G^{m',b'} G^{m,b}\;.$$
Since there are countably many events $E_m$, we deduce that up to a $\P$-null set the definition of $G^a, a \in \ccA$ is unambiguous and that the resolvent identity holds.
\end{proof}

\subsection{Definition of the operator}

For any given $a\in \ccA$, we set
$$ \cH := (G^{a})^{-1} - a\;.$$
This is an unbounded operator on $L^2$ whose domain is the range of $G^a$: from the resolvent identity, this range (and actually the definition of the operator $\cH$) does not depend on $a$. The self-adjointness of $G^{a}$ ensures that $\cH$ is also self-adjoint.

\begin{remark}
The domain of $\cH$ consists of all $\cR f$ where $f$ is obtained through the fixed point argument of Proposition \ref{Prop:Resolv} for some $g\in L^2$. Note that these functions can be decomposed as the sum of $(-\Delta + a)^{-1} g$, which lies in $H^2$, and $\cR \tilde{f}$, where $\tilde{f}$ is a modelled distribution in $\cD^{\tilde{\gamma},\sigma}_{p,\infty}$ with $\tilde{\gamma} < 4-d/2$.
\end{remark}

The spectral theorem for compact operators, see for instance~\cite[Th VI.16]{ReedSimon}, yields the existence of a complete orthonormal basis $(\varphi_n)_{n\ge 1}$ of eigenfunctions with associated eigenvalues $(\mu_n^{(a)})_{n\ge 1}$:
$$ G^a \varphi_n = \mu_n^{(a)} \varphi_n\;,\quad n\ge 1\;.$$
We deduce that $\cH$ admits a pure point spectrum given by the set $\{(\mu_n^{(a)})^{-1} - a:n\ge 1\}$, and that the eigenfunctions are the $\{\varphi_n:n\ge 1\}$.\\

For $a\in\ccA$, we do not know whether $G^a$ is a positive operator, and the ordering of the eigenvalues $(\mu_n^{(a)})_{n\ge 1}$ is arbitrary. To deal with positive operators, we introduce the following subset of $\ccA$
\begin{align*}
\ccA_+ := \Big\{ m+b+C^{(m) - (1)}: m\ge 1, b\in (-2,2) &\mbox{ s.t. }\forall m'\ge m\\
\$ \Pi^{(m')} \$ (1+ \$ \Gamma^{(m')} \$) < \frac12 A(m') &\quad\mbox{ and }\quad |C^{(m'+1) - (1)} -C^{(m') - (1)}| < 1\Big\}\;.
\end{align*}
\begin{lemma}
Almost surely, $\ccA_+$ belongs to the resolvent set of $\cH$ and there exists $a_+\in\R$ such that $\ccA_+ = (a_+,+\infty)$.
\end{lemma}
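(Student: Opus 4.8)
The plan is to treat the two assertions separately, both being short consequences of the material already in place.

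\emph{The resolvent-set part.} First I would note that $\ccA_+\subseteq\ccA$: if $a=m+b+C^{(m)-(1)}$ witnesses membership in $\ccA_+$, then taking $m'=m$ in the defining conditions gives $\$\Pi^{(m)}\$(1+\$\Gamma^{(m)}\$)<\tfrac12 A(m)<A(m)$, so the pair $(m,b)$ also witnesses $a\in\ccA$. Since $\cH=(G^a)^{-1}-a$ for every $a\in\ccA$ (the definition being independent of $a$ by the resolvent identity of Proposition \ref{Prop:Compact}), it follows that for each $a\in\ccA_+$ the operator $\cH+a$ is boundedly invertible, with inverse the compact operator $G^a$; hence $\ccA_+$ is contained in the resolvent set of $\cH$.

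\emph{Identifying $\ccA_+$ as a union of intervals.} Next I would control when the two conditions defining $\ccA_+$ hold for all $m'\ge m$. From the tail bound \eqref{Eq:BoundModel}, $\P\big(\$\Pi^{(m)}\$(1+\$\Gamma^{(m)}\$)>K\big)\le C m^{-2}$ is summable, so by Borel--Cantelli almost surely $\$\Pi^{(m)}\$(1+\$\Gamma^{(m)}\$)\le K$ for all but finitely many $m$; since $A(m)\to\infty$ this produces an a.s.\ finite $m_1$ with $\$\Pi^{(m)}\$(1+\$\Gamma^{(m)}\$)<\tfrac12 A(m)$ for all $m\ge m_1$. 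On the deterministic side, \eqref{Eq:BoundCaa'} gives $|C^{(m+1)-(1)}-C^{(m)-(1)}|\lesssim\sqrt{m+1}-\sqrt m\to0$, so there is $m_2$ with $|C^{(m+1)-(1)}-C^{(m)-(1)}|<1$ for all $m\ge m_2$. The set of integers $m\ge1$ for which both conditions hold at every $m'\ge m$ is then nonempty (it contains $\max(m_1,m_2)$) and upward closed, hence equals $\{m:m\ge\hat m\}$ for some a.s.\ finite $\hat m$. Since the conditions do not involve $b$, the definition of $\ccA_+$ collapses to $\ccA_+=\bigcup_{m\ge\hat m} I_m$ with $I_m:=\big(m-2+C^{(m)-(1)},\,m+2+C^{(m)-(1)}\big)$.

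\emph{Conclusion.} Finally I would check that this union is an open half-line. For $m\ge\hat m$ the bound $|C^{(m+1)-(1)}-C^{(m)-(1)}|<1$ forces $I_m$ and $I_{m+1}$ to overlap, since the left endpoint $m-1+C^{(m+1)-(1)}$ of $I_{m+1}$ is strictly below the right endpoint $m+2+C^{(m)-(1)}$ of $I_m$ (as $C^{(m+1)-(1)}-C^{(m)-(1)}<1<3$). Hence $\bigcup_{m\ge\hat m} I_m$ is a connected open subset of $\R$, i.e.\ an open interval; using $C^{(m)-(1)}=o(m)$, its right endpoint is $+\infty$ and $a_+:=\inf_{m\ge\hat m}\big(m-2+C^{(m)-(1)}\big)$ is finite (the sequence $m-2+C^{(m)-(1)}$ tends to $+\infty$, so its infimum is a minimum over a finite set). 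Therefore $\ccA_+=(a_+,+\infty)$ with $a_+\in\R$, as claimed. The only step that needs genuine care is the bookkeeping showing that the admissible starting indices are exactly the integers $m\ge\hat m$ for a suitable $\hat m$; everything else reduces directly to the bounds already established, so I do not anticipate a real obstacle.
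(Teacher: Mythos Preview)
Your proof is correct and follows essentially the same route as the paper's: Borel--Cantelli combined with \eqref{Eq:BoundModel} and \eqref{Eq:BoundCaa'} for non-emptiness, $C^{(m)-(1)}=o(m)$ for the left/right boundedness, and the gap condition $|C^{(m+1)-(1)}-C^{(m)-(1)}|<1$ for connectivity of the union of intervals. You are simply more explicit than the paper in spelling out the resolvent-set inclusion $\ccA_+\subset\ccA$ and in identifying the upward-closed set of admissible starting indices as $\{m\ge\hat m\}$, but the argument is the same.
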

\begin{proof}
The Borel-Cantelli Lemma combined with \eqref{Eq:BoundModel} and \eqref{Eq:BoundCaa'} ensures $\ccA_+$ is almost surely non-empty. Since $C^{(m)-(1)} = o(m)$, we easily deduce that $\inf_{m\ge 1} m+C^{(m)-(1)}>-\infty$ and $\sup_{m\ge 1} m+C^{(m)-(1)}=+\infty$, so that $\ccA_+$ is bounded from the left and unbounded from the right. If $|C^{(m'+1) - (1)} -C^{(m') - (1)}| < 1$ then one can find $b$ and $b'$ such that $m'+b+C^{(m') - (1)} = m'+1+b'+C^{(m'+1) - (1)}$. This ensures that $\ccA_+$ is connected, and the statement follows.
\end{proof}
\noindent Consequently, for any $a\in\ccA_+$ the operator $\cH + a$ is positive and therefore $G^a$ is positive as well, so we can assume that the sequence $(\mu_n^{(a)})_{n\ge 1}$ is non-increasing and converges to $0$. We thus set $\lambda_n := (\mu_n^{(a)})^{-1} - a$ and we obtain
$$ \cH \varphi_n = \lambda_n \varphi_n\;,\quad n\ge 1\;.$$
The sequence $(\lambda_n)_{n\ge 1}$ is non-decreasing. Moreover, we have the bound
\begin{equation}\label{Eq:TrivBound}
\lambda_n \ge - \min(\ccA_+)\;,\quad \forall n\ge 1\;.
\end{equation}

\medskip

Note that this construction also applies to the operators $G^a_\epsilon$ and allows to define $\cH_\epsilon$. We have
$$ \cH_\epsilon = -\Delta + \xi_\epsilon + C^{(1)}_\epsilon\;.$$

Before we proceed to the proof of Theorem \ref{Th:Main}, we state a general result on compact, self-adjoint operators.
\begin{lemma}\label{Lemma:CVCompact}
Let $G_\epsilon$ be a sequence of non-negative self-adjoint operators on $L^2$ that converges for the operator norm to some positive self-adjoint operator $G$. Assume that there exists a compact set $K \subset L^2$ such that for all $\epsilon > 0$, $G_\epsilon B(0,1) \subset K$. Then, $(\mu_{n,\epsilon},\varphi_{n,\epsilon})_{n\ge 1}$ converges to $(\mu_n,\varphi_n)_{n\ge 1}$ in the sense of Remark \ref{Remark:CV}.
\end{lemma}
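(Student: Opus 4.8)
The plan is to exploit the standard fact that eigenvalues of compact self-adjoint operators are stable under norm-convergence, and then to upgrade this to convergence of the spectral projections (hence of the eigenspaces in the sense of Remark \ref{Remark:CV}). First I would invoke the min-max (Courant--Fischer) characterisation of the ordered eigenvalues: since $G_\epsilon \to G$ in operator norm, for each fixed $n$ the $n$-th largest eigenvalue $\mu_{n,\epsilon}$ converges to $\mu_n$, using $|\mu_{n,\epsilon}-\mu_n| \le \|G_\epsilon - G\|$. In particular, since $G$ is positive, all $\mu_n>0$ and they form a strictly decreasing-to-zero sequence of \emph{distinct} values $\Lambda_1 > \Lambda_2 > \ldots$ with finite multiplicities $m_i$; set $M_i := \sum_{j\le i} m_j$ as in Remark \ref{Remark:CV}.

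Next I would fix $i\ge1$ and a contour argument. Choose $r>0$ small enough that the circle $\Gamma_i := \{z : |z - \Lambda_i| = r\}$ in $\C$ encloses $\Lambda_i$ and no other eigenvalue of $G$, and such that $r < \tfrac12\mathrm{dist}(\Lambda_i, \sigma(G)\setminus\{\Lambda_i\})$. For $\epsilon$ small enough (depending on $i$), $\|G_\epsilon - G\|$ is smaller than the distance from $\Gamma_i$ to $\sigma(G)$, so $\Gamma_i$ lies in the resolvent set of $G_\epsilon$ as well, and moreover $\Gamma_i$ encloses exactly the eigenvalues of $G_\epsilon$ that converge to $\Lambda_i$ — there are exactly $m_i$ of them counted with multiplicity, by the eigenvalue convergence just established. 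The spectral (Riesz) projections
\begin{equation*}
\Pi_i := \frac{1}{2\pi i}\oint_{\Gamma_i} (z - G)^{-1}\,dz\;,\qquad \Pi_{i,\epsilon} := \frac{1}{2\pi i}\oint_{\Gamma_i} (z - G_\epsilon)^{-1}\,dz\;,
\end{equation*}
then satisfy $\|\Pi_{i,\epsilon} - \Pi_i\| \to 0$, because the resolvents converge in operator norm uniformly on $\Gamma_i$ (from the second resolvent identity $(z-G_\epsilon)^{-1} - (z-G)^{-1} = (z-G_\epsilon)^{-1}(G_\epsilon - G)(z-G)^{-1}$ together with a uniform bound on $\|(z-G_\epsilon)^{-1}\|$ along $\Gamma_i$). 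Summing over the contours enclosing $\Lambda_1,\ldots,\Lambda_i$, the projection $P_i := \sum_{j\le i}\Pi_j$ onto $\mathrm{span}(\varphi_1,\ldots,\varphi_{M_i})$ and its counterpart $P_{i,\epsilon}$ onto $\mathrm{span}(\varphi_{1,\epsilon},\ldots,\varphi_{M_i,\epsilon})$ satisfy $\|P_{i,\epsilon}-P_i\|\to 0$; in particular $\mathrm{rank}\,P_{i,\epsilon} = M_i$ for $\epsilon$ small.

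Finally I would translate norm-convergence of the finite-rank orthogonal projections into Hausdorff convergence of the unit balls of their ranges. If $v$ is in the unit ball of $\mathrm{Ran}\,P_i$, then $P_{i,\epsilon}v$ lies in $\mathrm{Ran}\,P_{i,\epsilon}$ with $\|P_{i,\epsilon}v\| \le 1$ and $\|v - P_{i,\epsilon}v\| = \|(P_i - P_{i,\epsilon})v\|\le \|P_i-P_{i,\epsilon}\| \to 0$; renormalising (harmless since $\|P_{i,\epsilon}v\|\to\|v\|$) gives a point in the unit ball of $\mathrm{Ran}\,P_{i,\epsilon}$ within $o(1)$ of $v$. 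The symmetric argument with the roles of $P_i$ and $P_{i,\epsilon}$ exchanged gives the reverse inclusion up to $o(1)$, hence the unit balls converge for the Hausdorff metric, which is exactly the assertion of Remark \ref{Remark:CV}. The only mildly delicate point is the uniform boundedness of $\|(z - G_\epsilon)^{-1}\|$ for $z\in\Gamma_i$, which I expect to be the main (minor) obstacle: it follows once $\|G_\epsilon - G\| < r/2$ since then $\mathrm{dist}(z,\sigma(G_\epsilon)) \ge \mathrm{dist}(z,\sigma(G)) - \|G_\epsilon-G\| \ge r/2$ for all $z\in\Gamma_i$, giving $\|(z-G_\epsilon)^{-1}\| \le 2/r$. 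The compactness hypothesis $G_\epsilon B(0,1)\subset K$ is not strictly needed for this spectral argument (compactness of $G$ already follows from norm-convergence of compact operators, and that is what makes the spectrum discrete), but it guarantees the setting is consistent; I would simply note this.
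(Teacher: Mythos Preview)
Your proof is correct and takes a genuinely different route from the paper's. Both arguments begin identically, deducing $|\mu_{n,\epsilon}-\mu_n|\le \|G_\epsilon-G\|$ from the min--max formula. For the eigenfunctions, however, the paper does not use Riesz projections at all: it argues by compactness and subsequences. Since $\varphi_{k,\epsilon}=\mu_{k,\epsilon}^{-1}G_\epsilon\varphi_{k,\epsilon}$ and $\mu_{k,\epsilon}\to\mu_k>0$, the tuples $(\varphi_{1,\epsilon},\ldots,\varphi_{n,\epsilon})$ lie in a fixed compact set (a dilate of $K^n$); any subsequential limit $(\psi_1,\ldots,\psi_n)$ is orthonormal, and passing to the limit in $\mu_{k,\epsilon}\varphi_{k,\epsilon}=G_\epsilon\varphi_{k,\epsilon}$ shows each $\psi_k$ is an eigenfunction of $G$ with eigenvalue $\mu_k$. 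This pins down the span of the first $M_i$ eigenfunctions along every subsequence, hence along the full sequence.

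Your contour/resolvent argument is the more classical perturbation-theoretic approach; it is cleaner in that it gives quantitative norm convergence of the spectral projections and, as you correctly observe, it does not actually need the uniform compact set $K$ beyond knowing that each $G_\epsilon$ is compact. The paper's argument is more elementary---no complex analysis or resolvent estimates on contours---but it genuinely consumes the hypothesis $G_\epsilon B(0,1)\subset K$ to get the relative compactness needed for subsequence extraction. Both routes reach the Hausdorff convergence required by Remark~\ref{Remark:CV}.
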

\begin{proof}
We first show that for any given $n\ge 1$, we have
\begin{equation}\label{Eq:DiffEigenvalues}
| \mu_{n,\epsilon} - \mu_n | \le \sup_{g \in L^2:\|g\|_{L^2} = 1} \| (G_\epsilon - G)g \|_{L^2}\;.
\end{equation}
From the min-max formula for non-negative, compact self-adjoint operators, we have
$$\mu_{n,\epsilon} = \sup_{F\subset L^2: \mbox{\scriptsize dim} (F) = n} \inf_{f\in F: \| f\|_{L^2} = 1} \langle G_\epsilon f , f \rangle_{L^2}\;,$$
(here $F$ ranges over all linear subspaces of $L^2$) and similarly for $\mu_n$ and $G$. Without loss of generality, assume that $\mu_{n,\epsilon} \le \mu_n$. Taking $F$ as the linear span of $\varphi_1,\ldots,\varphi_n$ we have
$$ \mu_n = \inf_{f\in F:\|f\|_{L^2}=1} \langle G f,f\rangle_{L^2}\;,\qquad \mu_{n,\epsilon} \ge \inf_{f\in F:\|f\|_{L^2}=1} \langle G_\epsilon f,f\rangle_{L^2}\;.$$
Furthermore, we have
$$ \inf_{f\in F:\|f\|_{L^2}=1} \langle G_\epsilon f,f\rangle_{L^2} \ge \inf_{f\in F:\|f\|_{L^2}=1} \langle G f,f\rangle_{L^2} + \inf_{f\in F:\|f\|_{L^2}=1} \langle (G_\epsilon-G)f,f\rangle_{L^2}\;,$$
so
$$ 0 \le \mu_n - \mu_{n,\epsilon}\le \sup_{f\in F:\|f\|_{L^2}=1} \langle (G - G_\epsilon)f,f\rangle_{L^2} \le \sup_{g \in L^2:\|g\|_{L^2} = 1} \| (G_\epsilon - G)g \|_{L^2}\;,$$
and \eqref{Eq:DiffEigenvalues} follows. We thus deduce the convergence of the eigenvalues.\\
We pass to the convergence of the eigenfunctions. Fix $n\ge 1$. The collection of vectors $(\varphi_{1,\epsilon},\ldots,\varphi_{n,\epsilon})_{\epsilon >0}$ takes values in $K\times\ldots\times K$ which is a compact subset of $L^2\times\ldots\times L^2$. Let $(\psi_1,\ldots,\psi_n)$ be the limit of a converging subsequence (for simplicity we keep the notation $\epsilon$ for the subsequence). Note that orthonormality is preserved under the limit so that $\langle \psi_k , \psi_\ell\rangle_{L^2} = \delta_{k,\ell}$. Taking the limit as $\epsilon\downarrow 0$ of
$$ \mu_{k,\epsilon} \varphi_{k,\epsilon} = G_\epsilon \psi_k + G_\epsilon (\varphi_{k,\epsilon} - \psi_k)\;,$$
we deduce that $\mu_k \psi_k = G \psi_k$. Consequently $(\psi_1,\ldots,\psi_n)$ are linearly independent eigenfunctions associated to the sequence $\mu_1,\ldots,\mu_n$. This holds true for any converging subsequence, we thus deduce the convergence of the statement.
\end{proof}

\begin{proof}[Proof of Theorem \ref{Th:Main}]
In this proof, we consider a parameter $a=a(m)$ of the form $a=m + C^{(m) - (1)}$ for some $m\in\N$. Since $\ccA_+$ is a.s. unbounded from the right, we have that $\P(a \in \ccA_+) \to 1$ as $m\uparrow \infty$. In the sequel, we take $m$ large enough such that $\P(a \in \ccA_+) > 0$. By \eqref{Eq:BoundCaa'} and Proposition \ref{Prop:Renorm}, the following holds true. For $\epsilon$ small enough there exists $b_\epsilon \in (-2,2)$ such that $m+b_\epsilon + C^{(m)}_\epsilon - C^{(1)}_\epsilon = a$ and $\P(a \in \ccA_\epsilon | a \in \ccA_+) \to 1$ as $\epsilon \downarrow 0$. Note that $b_\epsilon \downarrow 0$ as $\epsilon\downarrow 0$.\\
Using the continuity bound \eqref{Eq:ContBnd}, we deduce that conditionally given $a\in \ccA_+$,
$$ \sup_{g \in L^2:\|g\|_{L^2} = 1} \| (G^a_\epsilon\un_{a\in\ccA_\epsilon} - G^a)g \|_{L^2}\;,$$
converges in probability to $0$. Here $G^a_\epsilon\un_{a\in\ccA_\epsilon}$ denotes the operator that equals $G^a_\epsilon$ when $a\in\ccA_\epsilon$ and that is null otherwise. Furthermore, there exists $\delta >0$ and a deterministic constant $C_{a,L} >0$ such that for every $\epsilon > 0$ we have
$$ \sup_{g\in L^2: \|g\|_{L^2}\le 1} \| G^a_\epsilon\un_{a\in\ccA_\epsilon} g\|_{\cB^{\delta}_{2,2}} < C_{a,L}\;.$$
Recall that the embedding of $\cB^{\delta}_{2,2}$ into $L^2$ is compact. Hence, conditionally given $a\in \ccA_+$, the sequence of operators $G^a_\epsilon \un_{a\in\ccA_\epsilon}$ converges in probability to $G^a$, and maps the unit ball of $L^2$ into the centred ball of radius $C_{a,L}$ of $\cB^{\delta}_{2,2}$, which is a compact subset of $L^2$.\\
Let $\Sp(G^a_\epsilon\un_{a\in\ccA_\epsilon})$ denotes the spectrum of $G^a_\epsilon\un_{a\in\ccA_\epsilon}$. We aim at showing that, conditionally given $a\in\ccA_+$, the probability that this spectrum is contained in $(0,\infty)$ goes to $1$. First, since $G^a$ is positive on the event $a\in\ccA_+$, the convergence of $G^a_\epsilon \un_{a\in\ccA_\epsilon}$ towards $G^a$ ensures that for all $r>0$ we have
$$ \P\big(\Sp(G^a_\epsilon\un_{a\in\ccA_\epsilon}) \cap (-\infty,-r] \ne \emptyset \,|\, a\in\ccA_+\big) \to 0\;,\quad \epsilon\downarrow 0\;.$$
Second, combining Lemma \ref{Lemma:Moments}, Lemma \ref{Lemma:ModelLattice} and \eqref{Eq:BoundCaa'}, we deduce that
$$ \sup_{\epsilon \in (0,1)} \P(\min \ccA_{\epsilon,+} > t) \to 0\;,\quad t\to\infty\;.$$
Consequently applying \eqref{Eq:TrivBound} to the operator $\cH_\epsilon$ we get
$$ \sup_{\epsilon \in (0,1)}\P(\lambda_{1,\epsilon} < -t) \to 0\;,\quad t\to\infty\;,$$
so that
$$ \varlimsup_{r\downarrow 0} \sup_{\epsilon\in (0,1)} \P\big(\Sp(G^a_\epsilon\un_{a\in\ccA_\epsilon}) \cap (-r,0) \ne \emptyset \,|\, a\in\ccA_+\big) = 0\;.$$
Therefore, if we let $\cS_\epsilon := \{\Sp(G^a_\epsilon\un_{a\in\ccA_\epsilon}) \subset (0,\infty)\}$ then we have shown that
$$ \P\big(\cS_\epsilon \,|\, a\in\ccA_+\big) \to 1\;,\quad \epsilon \downarrow 0\;.$$
This implies that on the event $a\in\ccA_+$, $G^a_\epsilon\un_{a\in\ccA_\epsilon}\un_{\cS_\epsilon}$ converges in probability to $G^a$. Lemma \ref{Lemma:CVCompact} then ensures that $(\mu_{n,\epsilon}^{(a)},\varphi_{n,\epsilon})_{n\ge 1}$ converges in probability to $(\mu_{n}^{(a)},\varphi_{n})_{n\ge 1}$ in the sense of Remark \ref{Remark:CV}. Since
$$ \lambda_{n,\epsilon} = (\mu_{n,\epsilon}^{(a)})^{-1} - a\;,$$
we deduce that on the event $a\in\ccA_+$, $(\lambda_{n,\epsilon})_{n\ge 1}$ converges in probability to $(\lambda_n)_{n\ge 1}$, thus concluding the proof.
\end{proof}

\subsection{Tail estimate}

The goal of this subsection is to prove the tail estimate stated in Theorem \ref{Th:Tail}. To keep notations simple, we let the domain be $(-1,1)^d$ but the arguments apply \textit{mutatis mutandis} to a domain of arbitrary given size. We follow the strategy of proof of~\cite{AllezChouk} that relies on an identity in law between the spectrum of the Anderson hamiltonian on $(-1,1)^d$ and the spectrum of some rescaled version of this operator on a larger domain.

We let $\xi$ be a white noise on $\R^d$ (in the periodic case: $\xi$ is periodic with domain $(-1,1)^d$). For any given $L>0$, we set
$$ \tilde{\xi}_\epsilon(x) = L^{-2} \xi_\epsilon(x/L)\;,\quad x\in \R^d\;,$$
and we let $\tilde{\xi}$ be its limit as $\epsilon\downarrow 0$. One can check that $\tilde{\xi}_\epsilon$ and $\tilde{\xi}$ have the same laws as $L^{d/2-2}\zeta_{\epsilon L}$ and $L^{d/2-2}\zeta$, where $\zeta$ is a white noise on $\R^d$ (in the periodic case: $\zeta$ is periodic with domain $(-L,L)^d$)  and $\zeta_{\epsilon L}=\zeta*\varrho_{\epsilon L}$.\\

We define the model $\tilde{\Pi}_\epsilon^{(a)}$ by renormalising the canonical model based on $\tilde{\xi}_\epsilon$, we proceed in the same way as in Subsection \ref{Sec:Models}, the only difference lies in the values of the constants. For $d=2$ we take $\tilde{c}_\epsilon^{(a)} = L^{-2} c_{\epsilon L}^{(a)}$, while for $d=3$ we take
\begin{equation}\label{Eq:Renormtilde}
\tilde{c}_\epsilon^{(a)} = L^{-1} c_{\epsilon L}^{(a)}\;,\quad \tilde{c}^{(a),1,1}_\epsilon = L^{-2} c^{(a),1,1}_{\epsilon L}\;,\quad \tilde{c}^{(a),1,2}_\epsilon = L^{-2}c^{(a),1,2}_{\epsilon L}\;.
\end{equation}
In this context, the sequence of models $(\tilde{\Pi}_\epsilon^{(a)},\tilde{\Gamma}_\epsilon^{(a)})$ converges in probability to a limiting model $(\tilde{\Pi}^{(a)},\tilde{\Gamma}^{(a)})$. An elementary computation shows that for any $a\ge 1$, $L^{-2} C_\epsilon^{(a)} - \tilde{C}_\epsilon^{(a)}$ converges as $\epsilon\downarrow 0$ to a finite limit denoted $\tilde{\delta}_L^{(a)}$, and that $\tilde{\delta}_L^{(a)}$ vanishes when $L\to\infty$.\\

We then denote by $(\tilde{\lambda}_n,\tilde{\varphi}_n)_{n\ge 1}$ the sequence of eigenvalues/eigenfunctions associated to the Anderson hamiltonian on $(-L,L)^d$ driven by the rescaled white noise $\tilde{\xi}$, and by $({\lambda}_n,{\varphi}_n)_{n\ge 1}$ the same quantities but for the Anderson hamiltonian on $(-1,1)^d$ driven by the white noise $\xi$. The following observation is the cornerstone of the proof of the tail estimate, and was originally proven by Allez and Chouk~\cite{AllezChouk} in dimension $2$ for the first eigenvalue.

\begin{lemma}\label{Lemma:IdLaw}
We have the following equality in law
$$ \big(L^{-2} \lambda_n \big)_{n\ge 1} = \big(\tilde{\lambda}_n + \tilde{\delta}_L^{(1)}\big)_{n\ge 1}\;.$$
\end{lemma}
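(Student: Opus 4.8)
The plan is to establish the identity in law by a scaling argument at the level of the regularised operators $\cH_\epsilon$ and then pass to the limit using Theorem \ref{Th:Main}. First I would work with the operator $-\Delta + \xi_\epsilon + C^{(1)}_\epsilon$ on $(-1,1)^d$, whose eigenvalues are $(\lambda_{n,\epsilon})_{n\ge 1}$, and perform the change of variables $x \mapsto x/L$. If $\varphi$ is an eigenfunction on $(-1,1)^d$ with eigenvalue $\mu$, then $\psi(x) := \varphi(x/L)$ is defined on $(-L,L)^d$, satisfies the same (Dirichlet or periodic) boundary conditions, and one computes $-\Delta_x \psi(x) = L^{-2}(-\Delta\varphi)(x/L)$, while $\xi_\epsilon(x/L)\varphi(x/L) = L^2\,\tilde\xi_\epsilon(x)\psi(x)$ by the very definition $\tilde\xi_\epsilon(x) = L^{-2}\xi_\epsilon(x/L)$. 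Hence dividing the eigenvalue relation $(-\Delta + \xi_\epsilon + C^{(1)}_\epsilon)\varphi = \mu\varphi$ by $L^2$ and evaluating at $x/L$ gives
\begin{equation*}
\big(-\Delta + \tilde\xi_\epsilon + L^{-2} C^{(1)}_\epsilon\big)\psi = L^{-2}\mu\,\psi\;.
\end{equation*}
Thus the operator $-\Delta + \tilde\xi_\epsilon + L^{-2}C^{(1)}_\epsilon$ on $(-L,L)^d$ has eigenvalues $(L^{-2}\lambda_{n,\epsilon})_{n\ge1}$, with the same multiplicities and with eigenfunctions obtained by the dilation; this is a genuine (non-random) identity for each fixed realisation of the noise, valid because $\xi_\epsilon$ is smooth and these are classical Schrödinger operators with $H^2\cap H^1_0$ (resp. $H^2\cap H^1_{\mathrm{per}}$) domain, as recorded in Lemma \ref{Lemma:ResolvId}.

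Next I would match the renormalisation constants. The operator $-\Delta + \tilde\xi_\epsilon + L^{-2}C^{(1)}_\epsilon$ is not exactly $\tilde\cH_\epsilon := -\Delta + \tilde\xi_\epsilon + \tilde C^{(1)}_\epsilon$, the renormalised operator built from $\tilde\xi_\epsilon$; the two differ by the scalar $L^{-2}C^{(1)}_\epsilon - \tilde C^{(1)}_\epsilon$. By the computation already quoted before the lemma, this difference converges as $\epsilon\downarrow 0$ to $\tilde\delta_L^{(1)}$. Therefore the eigenvalues $\tilde\lambda_{n,\epsilon}$ of $\tilde\cH_\epsilon$ satisfy
\begin{equation*}
\tilde\lambda_{n,\epsilon} + \big(L^{-2} C^{(1)}_\epsilon - \tilde C^{(1)}_\epsilon\big) = L^{-2}\lambda_{n,\epsilon}\;,\quad n\ge 1\;,
\end{equation*}
for every $\epsilon>0$, as a pathwise identity. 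Finally I would invoke Theorem \ref{Th:Main}: $\lambda_{n,\epsilon}\to\lambda_n$ in probability, and likewise $\tilde\lambda_{n,\epsilon}\to\tilde\lambda_n$ in probability (Theorem \ref{Th:Main} applied to the renormalised model $(\tilde\Pi^{(a)}_\epsilon,\tilde\Gamma^{(a)}_\epsilon)$ which, as noted just above, converges to $(\tilde\Pi^{(a)},\tilde\Gamma^{(a)})$). Since $L^{-2}C^{(1)}_\epsilon - \tilde C^{(1)}_\epsilon\to\tilde\delta_L^{(1)}$ deterministically, passing to the limit in the displayed identity yields $L^{-2}\lambda_n = \tilde\lambda_n + \tilde\delta_L^{(1)}$ for every $n$, in probability, hence the asserted equality in law (in fact a stronger almost-sure-type statement once the two families of models are coupled).

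The main obstacle I anticipate is not the scaling computation itself, which is elementary, but ensuring that the identity is stated for genuinely the same noise on both sides, i.e.\ keeping careful track of which white noise drives which operator. The statement of the lemma compares $\lambda_n$ on $(-1,1)^d$ driven by $\xi$ with $\tilde\lambda_n$ on $(-L,L)^d$ driven by $\tilde\xi$, and $\tilde\xi(x) = L^{-2}\xi(x/L)$ is a deterministic functional of $\xi$, so the dilation map sets up an exact pathwise correspondence rather than merely an identity in law; the ``$=$ in law'' in the statement is then immediate, but one should be mindful that in the periodic case $\xi$ is periodic on $(-1,1)^d$ and $\tilde\xi$ on $(-L,L)^d$, which is consistent with the dilation. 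A secondary technical point is the interchange of the $\epsilon\downarrow 0$ limit with the equality: this is harmless because the correction term is deterministic and convergent, and convergence in probability of each side is supplied by Theorem \ref{Th:Main}, so one simply takes limits along a common subsequence realising almost-sure convergence of both model families.
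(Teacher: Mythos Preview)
Your proposal is correct and follows essentially the same approach as the paper: perform the elementary scaling $x\mapsto x/L$ on the regularised operator to obtain the pathwise identity $L^{-2}\lambda_{n,\epsilon}=\tilde\lambda_{n,\epsilon}+(L^{-2}C^{(1)}_\epsilon-\tilde C^{(1)}_\epsilon)$, then pass to the limit using Theorem~\ref{Th:Main} on both sides and the deterministic convergence of the correction term to $\tilde\delta_L^{(1)}$. Your additional remarks on the pathwise nature of the identity and the handling of the periodic case are accurate clarifications but do not deviate from the paper's argument.
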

\begin{proof}
The key observation is that for any $\epsilon \in (0,1)$, $(-L,L)^d \ni x\mapsto \varphi_{n,\epsilon}(x/L)$ is an eigenfunction of the operator
$$ -\Delta + \tilde{\xi}_\epsilon + \tilde{C}_\epsilon^{(1)}\;,\quad x\in (-L,L)^d\;,$$
with eigenvalue $L^{-2}(\lambda_{n,\epsilon} + L^2 \tilde{C}_\epsilon^{(1)} - C_\epsilon^{(1)})$. Necessarily the latter quantity coincides with $\tilde{\lambda}_{n,\epsilon}$ so that we get
$$ L^{-2} \lambda_{n,\epsilon} = \tilde{\lambda}_{n,\epsilon} + L^{-2} C_\epsilon^{(1)} - \tilde{C}_\epsilon^{(1)}\;.$$
Recall that $L^{-2} C_\epsilon^{(1)} - \tilde{C}_\epsilon^{(1)}$ converges to a finite limit $\tilde{\delta}_L^{(1)}$ as $\epsilon\downarrow 0$. On the other hand, Theorem \ref{Th:Main} shows that $\lambda_{n,\epsilon}$ converges in law to $\lambda_{n}$, and the very same arguments show that $\tilde{\lambda}_{n,\epsilon} \to \tilde{\lambda}_{n}$, thus concluding the proof.
\end{proof}

Fix some constant $c>0$ (that will be adjusted later). We deduce from the lemma that we have
$$ \P(\lambda_n \le - c L^2) = \P(\tilde{\lambda}_n + \tilde{\delta}_L^{(1)} \le - c)\;.$$
Since $\tilde{\delta}_L^{(1)} \rightarrow 0$ as $L\to\infty$, to establish Theorem \ref{Th:Tail} it suffices to bound from above $\P(\tilde{\lambda}_n \le - c/2)$ and from below $\P(\tilde{\lambda}_n \le - 2 c)$ uniformly over all $L$ large enough. These two bounds will be obtained separately.

To prove the upper bound, we need the following bound on the norm of the models, whose proof is postponed to Subsection \ref{Subsec:Techos}.

\begin{proposition}\label{Prop:BoundModel}
There exist $K,C>0$ such that we have
$$ \P\big(\sup_{m\ge 1} \$ \tilde{\Pi}^{(m)} \$ (1+ \$ \tilde{\Gamma}^{(m)} \$) > K\big) \lesssim e^{-C L^{4-d}}\;,$$
uniformly over all $L\ge 1$.
\end{proposition}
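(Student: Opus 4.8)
The plan is to reduce the estimate, via Lemma \ref{Lemma:ModelLattice}, to controlling the lattice norm $\$ \tilde\Pi^{(m)} \Latt$ of the renormalised model built from the rescaled noise $\tilde\xi$, and then to exploit the fact that rescaling converts $\tilde\xi$ into $L^{d/2-2}\zeta$ (with $\zeta$ a white noise on the larger domain $(-L,L)^d$, see the discussion preceding Lemma \ref{Lemma:IdLaw}), so that each stochastic object carrying a factor $\Xi$ gains an explicit power of $L$ with a strictly negative exponent in the relevant range $d\le 3$.

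First, by Lemma \ref{Lemma:ModelLattice} it suffices to bound $\P(\$ \tilde\Pi^{(m)} \Latt > K')$ for a suitable $K'$ and then sum over $m\ge 1$. The lattice norm involves, for each negative-homogeneity symbol $\tau$ containing at least one instance of $\Xi$, the quantities $\langle (\tilde\Pi^{(m)})_x \tau, \varphi_x^n\rangle$ for $x\in\Lambda_n\cap(-L,L)^d$ and $n\ge n_m$. Each such pairing lies in a fixed Wiener chaos, so by the forthcoming moment bound (Lemma \ref{Lemma:Moments}, invoked in the proof of Proposition \ref{Prop:Renorm}) its $L^q$-norm is controlled by $2^{-nd/2}2^{-n|\tau|}$ times a constant; the key new input is that, because of the identity in law $\tilde\xi \overset{d}{=} L^{d/2-2}\zeta$, the kernels defining $\tilde\Pi^{(m)}$ pick up an overall factor $L^{(d/2-2)\cdot(\#\Xi)}$, and since $d/2-2<0$ and $\#\Xi\ge 1$ this is a negative power of $L$. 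Second, I would take high moments: using hypercontractivity in the fixed Wiener chaos, $\E[|\langle(\tilde\Pi^{(m)})_x\tau,\varphi_x^n\rangle|^q]^{1/q} \lesssim L^{-\theta}\,2^{-nd/2}2^{-n|\tau|}$ for some $\theta>0$ depending on $\tau$, uniformly in $x,n,m$. Then a union bound over the (at most $C L^d 2^{nd}$) grid points $x\in\Lambda_n\cap(-L,L)^d$, over $n\ge n_m$, and over $m\ge 1$, together with Chebyshev at order $q$, gives $\P(\$\tilde\Pi^{(m)}\Latt > K') \lesssim L^{d q'}L^{-\theta q}$ for an appropriate relation; choosing $q$ large enough that $\theta q - dq'$ dominates, one converts the polynomial loss in $L$ into whatever is needed. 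To reach the stretched-exponential rate $e^{-CL^{4-d}}$ rather than a mere polynomial one, I would instead estimate $\E[\exp(c\, |\langle(\tilde\Pi^{(m)})_x\tau,\varphi_x^n\rangle|^{2/\deg\tau})]$ — a Gaussian-type tail available in a fixed chaos — and note that the Gaussian normalisation forces the effective variance to scale like $L^{d-4}$ (precisely because of the $L^{d/2-2}$ prefactor on the noise, squared), so that the relevant large-deviation rate for $\{\$\tilde\Pi^{(m)}\Latt>K'\}$ is of order $L^{4-d}$; summing the geometric-type series over $n$ and over $m$ then yields the claimed bound $e^{-CL^{4-d}}$ uniformly in $L\ge 1$, after absorbing the polynomial volume factors $L^{d}2^{nd}$ into the exponential.

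The main obstacle is the bookkeeping of scales: one must check that, after rescaling, the $L$-dependence of every kernel $P_n^{(a)}$ and every renormalisation constant ($\tilde c_\epsilon^{(a)}$, $\tilde c_\epsilon^{(a),1,1}$, $\tilde c_\epsilon^{(a),1,2)}$ as in \eqref{Eq:Renormtilde}) is exactly the power of $L$ dictated by the number of noise factors and by homogeneity, with no hidden positive power of $L$ creeping in — in particular the cutoff scale $2^{-n_m}\le 1/\sqrt m$ and the interplay between $n$ and $m$ must be handled so that the sum over $n\ge n_m$ and over $m\ge 1$ converges and does not degrade the rate. One also has to make sure the volume factor $L^d$ from the number of grid points (and the $L$-dependence of the various embedding constants, cf.\ Remarks \ref{Remark:Embedding} and the remark after Theorem \ref{Th:Recons}) is genuinely subexponential compared with $e^{CL^{4-d}}$, which for $d=3$ means $L^d$ versus $e^{CL}$ — fine — and for $d\le 2$ is even easier. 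Once the correct $L$-power is identified for each symbol, the probabilistic part is a routine application of equivalence of moments in a fixed Wiener chaos exactly as in \cite{Hairer2014,HaiPar}, so I expect the scaling audit, not the stochastic estimates, to be where the real work lies.
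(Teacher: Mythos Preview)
Your proposal is correct and follows essentially the same route as the paper: reduce via Lemma \ref{Lemma:ModelLattice} to the lattice norm, use the identity in law $\tilde\Pi^{(m)}_x\tau \overset{d}{=} L^{(d/2-2)\|\tau\|}\Pi^{(m)}_x\tau$ (the renormalisation constants in \eqref{Eq:Renormtilde} are chosen precisely so that this scaling is exact, so the ``scaling audit'' you worry about is already built in), and then apply the exponential moment bound of Lemma \ref{Lemma:Moments} with Chebyshev to get the $e^{-CL^{4-d}}$ rate. The one point you leave slightly implicit is the summability in $m$: what makes $\sum_{m\ge 1}$ converge is the factor $a^\nu$ inside the exponential in Lemma \ref{Lemma:Moments} (coming, as you suspected, from the cutoff $n\ge n_m\sim\tfrac12\log_2 m$), which yields a bound of the form $L^d e^{-\lambda m^\nu K_0^{2/\|\tau\|}L^{4-d}}$ for each fixed $m$ and hence a summable series.
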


Let us now define the constant $c$. Let $A^{-1}$ be the reciprocal of the function $a\mapsto A(a)$ appearing in Proposition \ref{Prop:Resolv}. We take
$$ c = 6 \max(1 , A^{-1}(2K))\;,$$
where $K$ is the constant appearing in Proposition \ref{Prop:BoundModel}.

\begin{proof}[Proof of Theorem \ref{Th:Tail} - upper bound]
From \eqref{Eq:Renormtilde} and using a similar computation as for \eqref{Eq:BoundCaa'}, we deduce that for $L$ large enough we have
$$ \sup_{m\ge 1} |\tilde{C}^{(m+1) - (1)} - \tilde{C}^{(m) - (1)}| < 1\;.$$
On the event
$$ \bigcap_{m\ge 1} \Big\{\$\tilde{\Pi}^{(m)}\$ (1 + \$\tilde{\Gamma}^{(m)}\$) \le K \Big\}\;,$$
and for all $L$ large enough, the set $\tilde{\ccA}_+$ contains $\{m+\tilde{C}^{(m)-(1)}:m > A^{-1}(2K)\}$. Consequently \eqref{Eq:TrivBound} yields
$$ \tilde{\lambda}_n \ge -(m_0 + \tilde{C}^{(m_0) - (1)})\;,\quad\forall n\ge 1\;,$$
where $m_0 := \lceil A^{-1}(2K) \rceil$. Since $\tilde{C}^{(m_0) - (1)} \to 0$ as $L\to \infty$, we get
$$ \tilde{\lambda}_n \ge - \frac32 m_0 > -\frac{c}{2}\;,$$
for all $L$ large enough. Henceforth, for all $L$ large enough
$$ \P(\tilde{\lambda}_n \le - c/2) \le \P\Big(\sup_{m\ge 1} \$\tilde{\Pi}^{(m)}\$ (1 + \$\tilde{\Gamma}^{(m)}\$) > K \Big)\;.$$
By Proposition \ref{Prop:BoundModel}, we thus get the existence of a constant $b>0$ such that
$$ \P(\lambda_n \le - x)  \le e^{-b x^{2-\frac{d}{2}}}\;,$$
uniformly over all $x>0$ large enough.
\end{proof}
To bound from below $\P(\tilde{\lambda}_n \le - 2 c)$, the idea is to introduce a deterministic, smooth function $h$ in such a way that the $n$-th eigenvalue of the Anderson hamiltonian on $(-L,L)^d$ with potential $h$ lies below $-2c$ (actually, below $-3c$ to have some wiggle room). To that end, it suffices to take $h$ as the sum of $n$ appropriately chosen disjoint bumps. Then, by the Cameron-Martin Theorem one can estimate the probability that the noise $\tilde{\xi}$ is close to $h$ so that the continuity bound \eqref{Eq:ContBnd} allows to compare the eigenvalues of the two corresponding operators.
\begin{proof}[Proof of Theorem \ref{Th:Tail} - lower bound]
Let $f_1$ be a smooth function, supported in $B(0,1/2)$ such that $\|f_1\|_{L^2} = 1$. Set
$$ b := -3c - \| \nabla f_1\|_{L^2}^2 < 0\;,$$
and let $\chi_1$ be a non-positive smooth function, supported in $B(0,1)$, that equals $b$ on $B(0,1/2)$ and that is larger than $b$ elsewhere. Then, for every $k\in \{2,\ldots,n\}$ we let $(\chi_k,f_k)$ be translates of $(\chi_1,f_1)$ in such a way that for any $k \ne \ell$ the supports of $\chi_k$ and $f_\ell$ do not intersect. We set
$$ h := \sum_{k=1}^n \chi_k\;.$$
Consider the Anderson hamiltonian associated to the noise $h$ on $(-L,L)^d$, with $L$ large enough for the supports of all the previous functions to fall within $(-L,L)^d$. Denote by $(\bar{\lambda}_j)_{j\ge 1}$ its eigenvalues and let $(\bar{\Pi}^{(a)},\bar{\Gamma}^{(a)})$ be the corresponding model (no renormalisation is required since the noise is smooth). Note that for every $k\in\{1,\ldots,n\}$ we have
$$\| \nabla f_k \|_{L^2}^2 + \int h f_k^2 = \| \nabla f_k \|_{L^2}^2 + b \int f_k^2 = -3c\;.$$
The functions $f_1,\ldots, f_n$ are linearly independent since their supports are disjoint, and therefore the min-max formula yields the following upper bound
$$\bar\lambda_n \le -3c\;.$$
On the other hand, since the $\|h\|_\infty \le -b$, the min-max formula yields  the following lower bound
$$ b \le \bar\lambda_n\;.$$

Since $h$ is a given smooth function, the norm of the models $(\bar\Pi^{(m)},\bar\Gamma^{(m)})$ is uniformly bounded over all $m$ and all $L$. We introduce the event
$$ E_{m} := \big\{ m + \tilde{C}^{(m) - (1)} \in \tilde{\ccA}_+ \;,\quad m \in \bar{\ccA}_+ \big\}\;.$$
By Proposition \ref{Prop:BoundModel}, there exists $m_0 \ge 1$ such that
$$ \P(E_{m_0}) \to 1\;,\quad L\to\infty\;.$$
Set $\tilde{a} = m_0 + \tilde{C}^{(m_0) - (1)}$ and $\bar a = m_0$, and note that $|\tilde{a}-\bar a| \lesssim L^{-1}$. By the min-max formula (as in \eqref{Eq:DiffEigenvalues}), on the event ${E}_{m_0}$ we have
$$ \big|\tilde{\mu}_n^{(\tilde{a})} - \bar{\mu}_n^{(\bar a)}\big| \le \sup_{g \in L^2:\|g\|_{L^2} = 1} \| (\tilde{G}^{(\tilde{a})} - \bar{G}^{(\bar a)})g \|_{L^2}\;.$$
Recall that in dimension $d=3$ we have to embed $L^p$ into $L^2$ and that the norm of the embedding grows with $L$ like $L^{18 \kappa}$, see Remark \ref{Remark:Embedding}. Using the continuity bound \eqref{Eq:ContBnd}, we deduce that
$$ \big|\tilde{\mu}_n^{(\tilde{a})} - \bar{\mu}_n^{(\bar a)}\big| \lesssim L^{18 \kappa} (1+\$\bar{Z}^{(m_0)}\$) \$ \tilde{Z}^{(m_0)} ; \bar{Z}^{(m_0)} \$\;,$$
uniformly over all $L$ large enough.\\
On the other hand, we have on the event ${E}_{m_0}$
$$ \tilde{\mu}_n^{(\tilde{a})} - \bar{\mu}_n^{(\bar a)} = \frac1{\tilde{\lambda}_n + m_0 + \tilde{C}^{(m_0) - (1)}} - \frac1{\bar{\lambda}_n+m_0}\;.$$
Using Lemma \ref{Lemma:ModelLattice}, we thus deduce that there exists $\delta >0$ such that for all $L$ large enough we have
$$ \P(\tilde{\lambda}_n \le - 2c) \ge \P(\$ \tilde{\Pi}^{(m_0)} - \bar{\Pi}^{(m_0)} \Latt < \delta L^{-18 \kappa})\;.$$

\medskip

Observe that for every $\tau \in \cT_{<\gamma}$, $\tilde{\Pi}^{(m_0)}_x \tau$ lives in some inhomogeneous Wiener chaos associated to the Gaussian noise $\tilde{\xi}$. To estimate the probability on the right, we thus shift the noise by $h$ and bound the difference between the models based on $\tilde{\xi}+h$ and on $h$. More precisely, the Cameron-Martin Theorem ensures that
\begin{align*}
\P(\$ \tilde{\Pi}^{(m_0)} - \bar{\Pi}^{(m_0)} \$_\Lambda < \delta L^{-18 \kappa}) &= \E\Big[e^{-\langle \tilde{\xi},h\rangle - \frac12 L^{4-d} \|h\|_{L^2}^2} \,\un_{\{\$ \tilde{\Pi}^{(m_0)}(\tilde{\xi}+h) - \bar{\Pi}^{(m_0)} \$_\Lambda < \delta L^{-18 \kappa}\}}\Big]\\
&\ge e^{-\frac12 L^{4-d} \|h\|_{L^2}^2} \Big(\P\big(\$ \tilde{\Pi}^{(m_0)}(\tilde{\xi}+h) - \bar{\Pi}^{(m_0)} \$_\Lambda < \delta L^{-18 \kappa}\big) - \frac12\Big)\;,
\end{align*}
where we used that $\langle \tilde{\xi},h\rangle$ is a centred Gaussian r.v.~so that its probability of being positive equals $1/2$. Note that $\|h\|_{L^2}^2$ does not depend on $L$. If
\begin{equation}\label{Eq:ModelShift}
\P\big(\$ \tilde{\Pi}^{(n_0)}(\tilde{\xi}+h) - \bar{\Pi}^{(n_0)} \$_\Lambda < \delta L^{-18 \kappa}\big) \to 1\;,\quad L\to\infty\;,
\end{equation}
then we obtain the desired lower bound. The proof of \eqref{Eq:ModelShift} is postponed to the next subsection.
\end{proof}

\subsection{Some bounds on the models}\label{Subsec:Techos}

We start with a bound on the growth in $L$ of the exponential moments of the norm of the model (here the model is based on a white noise and is taken on $(-L,L)^d$). Recall the two sets $\cU$ and $\cF$ defined in Subsection \ref{Subsection:RS}. Denote by $\cW$ the set of symbols that lie in $\cU$ or $\cF$ that are not monomials and whose homogeneity is below $\gamma$. Denote by $\|\tau\|$ the number of formal occurrences of $\Xi$ in $\tau \in \cW$.
\begin{lemma}\label{Lemma:Moments}
There exist two constants $\lambda > 0$ and $\nu >0$ such that
$$ \sup_{a\ge 1} \sup_{L\ge 1} \frac1{L^d} \sum_{\tau \in \cW} \E\Big[\exp\Big(\lambda a^\nu \$ \Pi^{(a)} \tau \Latt^{\frac2{\|\tau\|}}\Big)\Big] < \infty\;,$$
and
$$ \sup_{\epsilon \in (0,1)}\sup_{a\ge 1} \sup_{L\ge 1} \frac1{L^d} \sum_{\tau \in \cW} \E\Big[\exp\Big(\lambda a^\nu \$ \Pi^{(a)}_\epsilon \tau \Latt^{\frac2{\|\tau\|}}\Big)\Big] < \infty\;.$$
\end{lemma}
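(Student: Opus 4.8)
The statement is an exponential-moment bound, uniform in $a\ge 1$, in $L\ge 1$ (after normalising by $L^d$) and in the mollification parameter $\epsilon$, on the lattice norms $\$ \Pi^{(a)}\tau\Latt$ for the finitely many ``non-polynomial'' symbols $\tau\in\cW$. Since $\Pi^{(a)}_x\tau$ (for the renormalised model based on white noise, or its mollification) lies in a fixed inhomogeneous Wiener chaos of order $\|\tau\|$, the natural route is: first establish a moment bound $\E\big[|\langle \Pi^{(a)}_x\tau,\varphi^n_x\rangle|^{q}\big]^{1/q}\lesssim \sqrt q\, a^{-\nu'} 2^{-nd/2}2^{-n|\tau|}$ for every integer $q\ge 1$, with constants uniform in the relevant parameters; then convert the pointwise $L^q$ control into an exponential moment for the supremum over the lattice $\Lambda_n\cap(-L,L)^d$ and over $n\ge n_a$ by a union-bound/chaining argument; the $L^d$ normalisation appears because the number of lattice points in the box at scale $2^{-n}$ is of order $L^d 2^{nd}$.

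\textbf{Step 1: $q$-th moment bounds for fixed lattice points.} For each $\tau\in\cW$, the quantity $\langle \Pi^{(a)}_x\tau,\varphi^n_x\rangle$ is an element of the $\|\tau\|$-th inhomogeneous Wiener chaos; by equivalence of moments (Nelson's hypercontractivity, Gaussian chaos) it suffices to control the second moment. The second moment is a generalised convolution of the kernels $P^{(a)}_+$ (and the shifted kernels $Z^{(a)}_+$ in the Dirichlet case) encoded by a labelled graph, exactly as in~\cite[Sec. 10]{Hairer2014} and~\cite[Sec. 4]{HaiPar}; one checks it is bounded by $C\,a^{-\nu'}2^{-nd}2^{-2n|\tau|}$ uniformly over $x$, $n\ge n_a$, $a\ge 1$, $L\ge 1$ and $\epsilon\in(0,1)$. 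The extra power $a^{-\nu'}$ comes from the exponential decay of $P^{(a)}$ at scale $1/\sqrt a$: integrating the graph over the ``internal'' vertices picks up a volume factor controlled by $(1/\sqrt a)^{\text{(something positive)}}$. Combining with hypercontractivity gives, for all integers $q\ge 1$,
\begin{equation*}
\E\Big[\big|\langle \Pi^{(a)}_x\tau,\varphi^n_x\rangle\big|^{q}\Big]^{1/q}\le C\,q^{\|\tau\|/2}\, a^{-\nu'} 2^{-nd/2}2^{-n|\tau|}\;.
\end{equation*}
The renormalisation subtractions are handled exactly as in the cited references; their only effect here is that the constants do not depend on $\epsilon$.

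\textbf{Step 2: from pointwise bounds to an exponential moment of the supremum.} Fix $\tau\in\cW$ and set $\beta=|\tau|$. By Step~1, for each $n\ge n_a$ and each $x\in\Lambda_n\cap(-L,L)^d$,
\begin{equation*}
\E\Big[\exp\Big(\mu\,a^{2\nu'/\|\tau\|}\,\frac{|\langle \Pi^{(a)}_x\tau,\varphi^n_x\rangle|^{2/\|\tau\|}}{2^{-nd/\|\tau\|}2^{-2n\beta/\|\tau\|}}\Big)\Big]\le 2\;,
\end{equation*}
for $\mu>0$ small enough depending only on $\cW$, since a random variable whose $q$-th moment grows like $q^{\|\tau\|/2}$ times a constant has a finite exponential moment of its $2/\|\tau\|$ power. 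Using $e^{\sup_i Y_i}\le\sum_i e^{Y_i}$, summing over $x\in\Lambda_n\cap(-L,L)^d$ (which has at most $C L^d 2^{nd}$ points) and over $n\ge n_a$, and absorbing the $2^{nd}$ factor by spending a fraction $2^{-n\delta}$ of the exponent (possible because $\$\Pi^{(a)}\tau\Latt$ is defined with the sharp rescaling $2^{-nd/2}2^{-n\beta}$, so there is a strictly summable geometric surplus; more precisely one replaces $2^{-n\beta}$ by $2^{-n(\beta+\delta)}$ inside the $\sup$ at the cost of a further $2^{-n\delta}$, and recalls $\beta<0$ is not an obstruction since the definition is already a supremum), we obtain
\begin{equation*}
\E\Big[\exp\Big(\lambda\,a^\nu\,\$\Pi^{(a)}\tau\Latt^{2/\|\tau\|}\Big)\Big]\le C\,L^d\;,
\end{equation*}
with $\nu=2\nu'/\|\tau\|$ (take the minimum over the finitely many $\tau\in\cW$) and $\lambda$ small. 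Dividing by $L^d$ and summing over the finitely many $\tau\in\cW$ gives the claim; the mollified model is treated identically since all bounds in Step~1 are uniform in $\epsilon$.

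\textbf{Main obstacle.} The delicate point is Step~1: obtaining the $a$-dependent gain $a^{-\nu'}$ (with $\nu'>0$) \emph{together with} the sharp $n$-scaling and uniformity in $L$ and $\epsilon$, for every symbol in $\cW$ including the higher-order ones in $d=3$ (e.g. $\Xi\cI(\Xi\cI(\Xi\cI(\Xi)))$ and its companions) and, under Dirichlet b.c., for the graphs involving the reflected kernels $Z^{(a)}_+$. This requires a careful bookkeeping of the labelled-graph bounds of~\cite[Sec.~10]{Hairer2014},~\cite[App.~A]{HaiPar} and the generalised-convolution estimates of~\cite{HaiPar,HairerQuastel}, checking that the exponential decay of $P^{(a)}$ at scale $1/\sqrt a$ turns every ``free'' integration variable into a factor bounded by a positive power of $1/\sqrt a$, while the homogeneity count (which is scale-invariant) is unaffected. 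Once this power counting is done, Step~2 is routine.
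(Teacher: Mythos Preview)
Your overall plan (hypercontractivity to reduce to a second-moment bound, then upgrade to an exponential moment for the lattice supremum) is the right one, but Step~2 as written does not work, and the mechanism you invoke in Step~1 is not the one that is actually used.

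\medskip

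\textbf{Step 1.} The second-moment bound one needs is
\[
\E\big[\langle \Pi^{(a)}_x\tau,\varphi^n_x\rangle^2\big]\le C\,2^{-n(d+2|\tau|+\delta)}\;,
\]
with $C$ \emph{independent of $a$} and with an \emph{extra} geometric factor $2^{-n\delta}$ (this is the usual $\kappa$-slack built into $|\Xi|=-d/2-\kappa$). There is no direct $a^{-\nu'}$ coming from ``volume of internal vertices'': for $\tau=\Xi$ the graph has no internal vertex and the second moment equals $1$, with no $a$-dependence at all. The $a$-gain in the statement comes \emph{only} from combining the slack $2^{-n\delta}$ with the restriction $n\ge n_a$ (so that $2^{-n\delta}\le 2^{-n_a\delta}\asymp a^{-\delta/2}$). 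Your Step~1 bound $Ca^{-\nu'}2^{-nd}2^{-2n|\tau|}$ drops this $2^{-n\delta}$, which is exactly what Step~2 needs. (Minor point: the admissible model is built from $P^{(a)}_+$ alone; the reflected pieces $Z^{(a)}_+$ never enter $\Pi^{(a)}$, only the convolution operator $\cK^{(a)}$, so boundary conditions play no role here.)

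\medskip

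\textbf{Step 2.} The inequality $e^{\sup_i Y_i}\le \sum_i e^{Y_i}$ together with the pointwise bound $\E[\exp(\mu\cdots)]\le 2$ yields
\[
\E\big[\exp(\lambda\,\$\Pi^{(a)}\tau\Latt^{2/\|\tau\|})\big]\le \sum_{n\ge n_a}\sum_{x\in\Lambda_n\cap(-L,L)^d} 2
\;\asymp\; \sum_{n\ge n_a} L^d 2^{nd}\;=\;\infty\;.
\]
Your ``absorb $2^{nd}$ by spending $2^{-n\delta}$ of the exponent'' does not repair this: having the exponential moment at each lattice point bounded by a constant is simply not summable over infinitely many scales. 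What the paper does instead is expand the exponential as a power series and bound each $p$-th moment \emph{before} summing over $(n,x)$: using $\sup^{2p/\|\tau\|}\le\sum$ together with hypercontractivity and the second-moment bound \emph{with the $\delta$-slack},
\[
\E\big[\$\Pi^{(a)}\tau\Latt^{2p/\|\tau\|}\big]\lesssim \sum_{n\ge n_a} L^d 2^{nd}\,C_{\|\tau\|,\frac{2p}{\|\tau\|}}\,2^{-n\delta p/\|\tau\|}\lesssim L^d\,C_{\|\tau\|,\frac{2p}{\|\tau\|}}\,2^{-n_a(\delta p/\|\tau\|-d)}\;,
\]
valid for $p>d\|\tau\|/\delta$ (for small $p$ one interpolates via Jensen). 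The factor $2^{-n_a(\delta p/\|\tau\|-d)}$ is what absorbs $a^{\nu p}$ once $\nu<\delta/(4\|\tau\|)$; summing the series over $p$ then gives the $L^d$ bound. In short, the key missing idea is that the $2^{-n\delta}$ surplus must be exploited at the level of high moments (where its $p$-th power beats the lattice cardinality $2^{nd}$), not at the level of a single exponential moment.
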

Note that the exponent $2/{\|\tau\|}$ is natural: roughly speaking, the white noise has gaussian tails so that symbols that contain $k$ instances of the white noise should have tails that decay like $e^{-x^{2/k}}$.
\begin{proof}
Recall from~\cite[Section 1.4.3]{Nualart} that for all r.v.~$X$ in the $k$-th inhomogeneous Wiener chaos associated to the stochastic $L^2$ space generated by the white noise, and for any $p > 2$:
$$ \E[|X|^{p}] \le C_{k,p} \E[|X|^2]^{p/2}\;,$$
where $C_{k,p} = (p-1)^{pk/2}$.\\
Fix $\tau$ and let $\varphi$ be the scaling function of some compactly supported wavelet basis of regularity $r > -|\Xi|$. By the construction of the renormalised model~\cite[Section 10.2]{Hairer2014}, there exist $\delta > 0$ and $C>0$ such that we have
$$ \E\big[\langle (\Pi^{(a)}_\epsilon)_x \tau , \varphi^n_x \rangle^2\big] \le C 2^{-n(d+2|\tau| + \delta)}\;,\qquad \E\big[\langle \Pi^{(a)}_x \tau , \varphi^n_x \rangle^2\big] \le C 2^{-n(d+2|\tau| + \delta)}\;,$$
for all $x\in \Lambda_n \cap (-L,L)^d$, all $n\ge n_a$, all $a\ge 1$ and all $\epsilon \in (0,1)$. Note that $C$ does not depend on $a\ge 1$ since the bounds \eqref{Eq:BndP} on the kernels hold uniformly over all $a\ge 1$.\\
The remainder of the proof is identical for the models $\Pi^{(a)}_\epsilon$ and $\Pi^{(a)}$, so we only consider the latter. We have
\begin{align*}
\E\Big[{\$ {\Pi}^{(a)} \tau \Latt}^{\frac{2p}{\|\tau\|}}\Big] &= \E\Big[\sup_{n\ge n_a} \sup_{x\in \Lambda_n\cap (-L,L)^d} \Big(\frac{\langle \Pi^{(a)}_x \tau , \varphi^n_x\rangle}{2^{-n(\frac{d}{2} + |\tau|)}}\Big)^{\frac{2p}{\|\tau\|}}\Big]\\
&\lesssim \sum_{n\ge n_a} 2^{nd} L^d C_{\|\tau\|,\frac{2p}{\|\tau\|}} \E\Big[ \Big(\frac{\langle \Pi^{(a)}_0 \tau , \varphi^n_0\rangle}{2^{-n(\frac{d}{2} + |\tau|)}}\Big)^{2}\Big]^{\frac{p}{\|\tau\|}}\\
&\lesssim L^d C_{\|\tau\|,\frac{2p}{\|\tau\|}} \sum_{n\ge n_a} 2^{nd} 2^{-n\delta \frac{p}{\|\tau\|}}\\
&\lesssim 2^{-n_a(\delta \frac{p}{\|\tau\|}-d)} L^d C_{\|\tau\|,\frac{2p}{\|\tau\|}}\;,
\end{align*}
as soon as $p > d \|\tau\| / \delta$. This allows to bound the $p$-th moment of $\$ {\Pi}^{(a)} \tau \Latt^{\frac{2}{\|\tau\|}}$ for all $p\ge p_0$ where $p_0 :=  2d \|\tau\| / \delta$. For $p\le p_0$, we write
\begin{align*}
\E\Big[\$ {\Pi}^{(a)} \tau \Latt^{\frac{2p}{\|\tau\|}}\Big] &\le \E\Big[\$ {\Pi}^{(a)} \tau \$_{|\tau|}^{\frac{2p_0}{\|\tau\|}}\Big]^{\frac{p}{p_0}}\le \Big(2^{-n_a\delta \frac{p_0}{2\|\tau\|}} L^d C_{\|\tau\|,\frac{2p_0}{\|\tau\|}}\Big)^{\frac{p}{p_0}}\lesssim 2^{-n_a\delta \frac{p}{2\|\tau\|}} L^d C_{\|\tau\|,\frac{2p_0}{\|\tau\|}}\;.
\end{align*}
This being given, we write
\begin{align*}
\E\Big[\exp(\lambda a^\nu \$ {\Pi}^{(a)} \tau \Latt^{\frac{2}{\|\tau\|}})\Big] &= \sum_{p\ge 0} \frac{\lambda^p a^{\nu p}}{p!} \E\Big[\$ {\Pi}^{(a)} \tau \$_{\Lambda}^{\frac{2p}{\|\tau\|}}\Big]\\
&\lesssim \sum_{p\le p_0} \frac{\lambda^p a^{\nu p}}{p!}2^{-n_a\delta \frac{p}{2\|\tau\|}} L^d C_{\|\tau\|,\frac{2p_0}{\|\tau\|}} + \sum_{p>p_0} \frac{\lambda^p a^{\nu p}}{p!} 2^{-n_a\delta \frac{p}{2\|\tau\|}}L^d C_{\|\tau\|,\frac{2p}{\|\tau\|}}\;.
\end{align*}
By choosing $\nu$ smaller than $\delta/(4\|\tau\|)$, we can bound $a^{\nu p}2^{-n_a\delta \frac{p}{2\|\tau\|}}$ by $1$. The first sum is then bounded by a term of order $L^d$. Given the expression of the constant $C_{k,p}$ recalled at the beginning of the proof, we deduce that for $\lambda$ small enough, the second sum is bounded by a term of order $L^d$ as well. This concludes the proof of the lemma.
\end{proof}
\begin{proof}[Proof of Proposition \ref{Prop:BoundModel}]
By Lemma \ref{Lemma:ModelLattice}, it suffices to prove that for every $\tau \in \cW$ there exist $K_0,C>0$ such that uniformly over all $L\ge 1$
$$\P(\sup_{m\ge 1} \$ \tilde{\Pi}^{(m)} \tau \Latt > K_0) \lesssim e^{-C L^{4-d}}\;.$$
Since $\tilde{\xi}$ has the same law as $L^{\frac{d}{2} - 2}\xi$, we deduce the following identity in law
$$ \tilde{\Pi}^{(m)}_x \tau = L^{(\frac{d}{2} - 2) \|\tau\|} \Pi^{(m)}_x \tau \;,$$
where on the r.h.s.~the model is built on $(-L,L)^d$ with a white noise. Using Lemma \ref{Lemma:Moments}, we obtain
\begin{align*}
\P(\$ \tilde{\Pi}^{(m)} \tau \Latt > K_0) &= \P( \$ {\Pi}^{(m)} \tau \Latt > K_0 L^{(2-\frac{d}{2}) \|\tau\|})\\
&\le e^{-\lambda m^\nu K_0^{\frac{2}{\|\tau\|}} L^{4-d}} \E\Big[\exp(\lambda m^\nu \$ {\Pi}^{(m)} \tau \$_{\Lambda}^{\frac{2}{\|\tau\|}})\Big]\\
&\le L^d e^{-\lambda m^\nu K_0^{\frac{2}{\|\tau\|}} L^{4-d}} \sup_{L\ge 1} \frac1{L^d} \E\big[\exp(\lambda m^\nu \$ {\Pi}^{(m)} \tau \$_{\Lambda}^{\frac{2}{\|\tau\|}})\big]\\
&\lesssim L^d e^{-\lambda m^\nu K_0^{\frac{2}{\|\tau\|}} L^{4-d}}\;,
\end{align*}
Therefore, there exists $C>0$ such that
\begin{align*}
\P(\sup_{m\ge 1}\$ \tilde{\Pi}^{(n)} \tau \$_{|\tau|} > K_0) \lesssim \sum_{m\ge 1} L^d e^{-\lambda m^\nu K_0^{\frac{2}{\|\tau\|}} L^{4-d}}\lesssim e^{-C L^{4-d}}\;,
\end{align*}
and this completes the proof.
\end{proof}
We now proceed to the proof of \eqref{Eq:ModelShift}. We will rely on graphical notations introduced in~\cite[Section 5]{HaiPar} and on a bound on generalised convolutions established in~\cite[Appendix A]{HairerQuastel}: we will not recall the whole set of notations and definitions, and refer the reader to the aforementioned references.
\begin{proof}[Proof of \eqref{Eq:ModelShift}]
For simplicity, we drop the superscript $m_0$ and we write $\tilde{\Pi}^h$ for the model associated to the noise shifted in direction $h$. By the arguments presented in this subsection, it suffices to show that for all $\tau \in \cW$ with $|\tau| < 0$, we have
\begin{equation}\label{Eq:ShiftBound}
\E\Big[\langle \big(\tilde{\Pi}^h - \bar{\Pi}\big)_x \tau , \varphi^\lambda_x\rangle^2\Big] \lesssim L^{d - 4} \lambda^{2|\tau|}\;,
\end{equation}
uniformly over all $\varphi\in\ccB^r$, all $x\in (-L,L)^d$ and all $\lambda \in (0,2^{-m_0}]$.\\
We start with $\tau = \Xi$, in which case we have
$$ \tilde{\Pi}^h_x \Xi = \tilde{\xi} + h\;,$$
so that
$$ \big(\tilde{\Pi}^h - \bar{\Pi}\big)_x \Xi = \tilde{\xi}\;.$$
Since $\tilde{\xi}$ equals in law $L^{\frac{d}{2}-2} \zeta$ where $\zeta$ is a white noise, we deduce that \eqref{Eq:ShiftBound} holds in this case.\\
Next, we consider $\tau = \Xi\, \cI(\Xi)$ (in dimension $d\ge 2$ only). To facilitate the analysis, we rely on the graphical notations introduced in~\cite[Section 5]{HaiPar}. In particular a circle denotes an instance of $\Xi$ (recall that here $\Xi$ corresponds to $\tilde{\xi}$ which is a scaled white noise), a black dot an integration variable, a black arrow the kernel $P^{(a)}_+$ and a red arrow the test function. We introduce an additional graphical notation: a square will denote an instance of $h$. Note that since $h$ is smooth, each square has the very same ``behaviour'' as a black dot. We then have
\begin{equation}
\bigl(\tilde{\Pi}^h\, \<Xi2>\bigr)(\varphi^\lambda)  =\;
\begin{tikzpicture}[scale=0.35,baseline=0.3cm]
	\node at (0,-1)  [root] (root) {};
	\node at (-2,1)  [var] (left) {};
	\node at (0,3)  [var] (left1) {};
	
	\draw[testfcn] (left) to  (root);	
	\draw[kernel1] (left1) to (left);
\end{tikzpicture}\;
+
\begin{tikzpicture}[scale=0.35,baseline=0.3cm]
	\node at (0,-1)  [root] (root) {};
	\node at (-2,1)  [varH] (left) {};
	\node at (0,3)  [var] (left1) {};
	
	\draw[testfcn] (left) to  (root);	
	\draw[kernel1] (left1) to (left);
\end{tikzpicture}\;
+
\begin{tikzpicture}[scale=0.35,baseline=0.3cm]
	\node at (0,-1)  [root] (root) {};
	\node at (-2,1)  [var] (left) {};
	\node at (0,3)  [varH] (left1) {};
	
	\draw[testfcn] (left) to  (root);	
	\draw[kernel1] (left1) to (left);xx
\end{tikzpicture}\;
+
\begin{tikzpicture}[scale=0.35,baseline=0.3cm]
	\node at (0,-1)  [root] (root) {};
	\node at (-2,1)  [varH] (left) {};
	\node at (0,3)  [varH] (left1) {};
	
	\draw[testfcn] (left) to  (root);	
	\draw[kernel1] (left1) to (left);
\end{tikzpicture}\;
 - L^{d-4}\;
\begin{tikzpicture}[scale=0.35,baseline=0.3cm]
	\node at (0,-1)  [root] (root) {};
	\node at (0,3)  [dot] (top) {};
	
	\draw[testfcn,bend right = 60] (top) to  (root);
	
	\draw[kernel,bend left = 60] (top) to (root);
\end{tikzpicture}
\end{equation}
The fourth term cancels out with $\bigl(\bar{\Pi} \<Xi2>\bigr)(\varphi^\lambda)$. The fifth term is deterministic and goes to $0$ as $L\to\infty$. To bound the first three terms, we rely on a bound on generalised convolutions established by Hairer and Quastel in~\cite[Appendix A]{HairerQuastel}. In a nutshell, to every kernel in the graphs one associates a pair of labels $(a,r)$: $a$ stands for the singularity of the kernel and $r$ to the order of renormalisation. Then, under the four conditions 1. ,2. , 3. and 4. of~\cite[Assumption A.1]{HairerQuastel} one concludes that the integral associated to the square of the $L^2$ norm of the random variable encoded by the graph decays at some explicit speed in $\lambda$ see~\cite[Theorem A.3]{HairerQuastel}. In our setting, we want to check that we get the right exponent in $\lambda$ and a prefactor in $L$ that decays fast enough. Note that the prefactor in $L$ will come from the fact that our noise is scaled by $L^{d/2-2}$ so that each circle in the graph yields a prefactor $L^{d-4}$ in the square of the $L^2$-norm.\\

The first term falls in the scope of Theorem A.1 as it is shown in~\cite[Section 5.2.1]{HaiPar}. Since in addition the noise is scaled by $L^{\frac{d}{2}-2}$ we get a multiplicative prefactor $L^{2d-8}$ in the bound, which is sufficient for our purpose. The second and third terms should be seen as slight modifications of the first term, but unfortunately do not match condition 4. of Theorem A.1. Indeed, if we focus on the second term the corresponding labelled graph is given by
$$ \begin{tikzpicture}[scale=0.4,baseline=0.3cm]
	\node at (0,-1)  [root] (root) {};
	\node at (-1.5,1)  [dot] (left) {};
	\node at (-1.5,3)  [dot] (left1) {};
	\node at (1.5,1) [dot] (variable1) {};
	\node at (1.5,3) [dot] (variable2) {};
	
	\draw[dist] (left) to (root);
	\draw[dist] (variable1) to (root);
	
	\draw[->,generic] (left1) to  node[labl,pos=0.45] {\tiny 1,1} (left);
	\draw[->,generic] (variable2) to node[labl,pos=0.45] {\tiny 1,1} (variable1);
	\draw[generic] (variable2) to node[labl] {\tiny 3+,-1} (left1); 
\end{tikzpicture}
\qquad\mbox{ instead of }\qquad
 \begin{tikzpicture}[scale=0.4,baseline=0.3cm]
	\node at (0,-1)  [root] (root) {};
	\node at (-1.5,1)  [dot] (left) {};
	\node at (-1.5,3)  [dot] (left1) {};
	\node at (1.5,1) [dot] (variable1) {};
	\node at (1.5,3) [dot] (variable2) {};
	
	\draw[dist] (left) to (root);
	\draw[dist] (variable1) to (root);
	
	\draw[->,generic] (left1) to  node[labl,pos=0.45] {\tiny 1,1} (left);
	\draw[->,generic] (variable2) to node[labl,pos=0.45] {\tiny 1,1} (variable1);
	\draw[generic] (variable2) to node[labl] {\tiny 3+,-1} (left1); 
	\draw[generic] (variable1) to  node[labl] {\tiny 3+,-1} (left); 
\end{tikzpicture}
$$
While the graph has the same vertices, an edge labelled $(3+, -1)$ has been removed. A careful inspection of conditions 1., 2. and 3. of Assumption A.1 shows that removing such an edge preserves the required inequalities. On the other hand, removing such an edge prevents condition 4. from being satisfied.\\

However, one can modify the statement of the theorem in order not to assume condition 4. Set
$$ \beta := \min_{\bar{\bbV} \subset \bbV\backslash \bbV_*} \bigg( \sum_{e\in \hat\bbE(\bar\bbV)\setminus \hat\bbE^{\downarrow}_+(\bar \bbV) }  \hat a_e 
 +\sum_{e\in \hat \bbE^{\uparrow}_+(\bar\bbV)}  r_e
- \sum_{e \in \hat \bbE^\downarrow_+(\bar \bbV)} (r_e-1)
- |\bar \bbV| d \bigg)\;.$$
Condition 4. requires $\beta > 0$. As observed in~\cite[Remark A.12]{HairerQuastel}, if $\beta < 0$ then Theorem A.3 still holds upon replacing $\tilde\alpha$ by $\tilde\alpha+\beta$.\\
Since we removed an edge labelled $(3+,-1)$ and since originally $\beta$ was strictly positive, we deduce that for the second term above we have $\beta > -3$. On the other hand, $\tilde{\alpha}$ increases by $3$ upon removing an edge $(3+,-1)$ so that we still have a bound of the desired order in $\lambda$. Furthermore, each occurrence of $\Xi$ in the graph produces a prefactor $L^{d-4}$ in the bound of the square of the $L^2$-norm and this completes the proof of \eqref{Eq:ShiftBound} for $\tau = \Xi\cI(\Xi)$.\\
The proof of \eqref{Eq:ShiftBound} for more complicated trees proceeds from exactly the same arguments: one observes a cancellation between the two models, and all remaining trees can be bounded using Theorem A.3 or its modification presented above.
\end{proof}

\bibliographystyle{Martin}
\bibliography{library}

\begin{thebibliography}{CRR06}
\expandafter\ifx\csname url\endcsname\relax
  \def\url#1{\texttt{#1}}\fi
\expandafter\ifx\csname urlprefix\endcsname\relax\def\urlprefix{URL }\fi
\expandafter\ifx\csname href\endcsname\relax
  \def\href#1#2{#2}\fi
\expandafter\ifx\csname burlalt\endcsname\relax
  \def\burlalt#1#2{\href{#2}{\texttt{#1}}}\fi

\bibitem[AC15]{AllezChouk}
\textsc{R.~{Allez}} and \textsc{K.~{Chouk}}.
\newblock {The continuous Anderson hamiltonian in dimension two}.
\newblock \emph{ArXiv e-prints} (2015).
\newblock \burlalt{arXiv:1511.02718}{http://arxiv.org/abs/1511.02718}.

\bibitem[And58]{Anderson58}
\textsc{P.~W. Anderson}.
\newblock {Absence of Diffusion in Certain Random Lattices}.
\newblock \emph{Physical Review} \textbf{109}, no.~5, (1958), 1492--1505.
\newblock
  \burlalt{doi:10.1103/PhysRev.109.1492}{http://dx.doi.org/10.1103/PhysRev.109.1492}.

\bibitem[AS64]{AbraSteg}
\textsc{M.~Abramowitz} and \textsc{I.~A. Stegun}.
\newblock \emph{Handbook of mathematical functions with formulas, graphs, and
  mathematical tables}, vol.~55 of \emph{National Bureau of Standards Applied
  Mathematics Series}.
\newblock For sale by the Superintendent of Documents, U.S. Government Printing
  Office, Washington, D.C., 1964.

\bibitem[BBF15]{BBF}
\textsc{I.~{Bailleul}}, \textsc{F.~{Bernicot}}, and \textsc{D.~{Frey}}.
\newblock {Space-time paraproducts for paracontrolled calculus, 3d-PAM and
  multiplicative Burgers equations}.
\newblock \emph{ArXiv e-prints} (2015).
\newblock \burlalt{arXiv:1506.08773}{http://arxiv.org/abs/1506.08773}.

\bibitem[CRR06]{CamRidRam}
\textsc{S.~Cambronero}, \textsc{B.~Rider}, and \textsc{J.~Ram{\'\i}rez}.
\newblock On the shape of the ground state eigenvalue density of a random
  {H}ill's equation.
\newblock \emph{Comm. Pure Appl. Math.} \textbf{59}, no.~7, (2006), 935--976.
\newblock \burlalt{doi:10.1002/cpa.20104}{http://dx.doi.org/10.1002/cpa.20104}.

\bibitem[Cv19]{CvZ}
\textsc{K.~{Chouk}} and \textsc{W.~{van Zuijlen}}.
\newblock {Asymptotics of the eigenvalues of the Anderson operator with white
  noise potential in two dimensions}.
\newblock \emph{in preparation} (2019).

\bibitem[DL17]{DL17}
\textsc{L.~{Dumaz}} and \textsc{C.~{Labb{\'e}}}.
\newblock {Localization of the continuous Anderson Hamiltonian in $1$-d}.
\newblock \emph{ArXiv e-prints} (2017).
\newblock \burlalt{arXiv:1711.04700}{http://arxiv.org/abs/1711.04700}.

\bibitem[FN77]{Fukushima}
\textsc{M.~Fukushima} and \textsc{S.~Nakao}.
\newblock On spectra of the {S}chr\"odinger operator with a white {G}aussian
  noise potential.
\newblock \emph{Z. Wahrscheinlichkeitstheorie und Verw. Gebiete} \textbf{37},
  no.~3, (1976/77), 267--274.

\bibitem[GH17]{Mate}
\textsc{M.~{Gerencs{\'e}r}} and \textsc{M.~{Hairer}}.
\newblock {Singular SPDEs in domains with boundaries}.
\newblock \emph{ArXiv e-prints} (2017).
\newblock \burlalt{arXiv:1702.06522}{http://arxiv.org/abs/1702.06522}.

\bibitem[GH19]{PAMMate}
\textsc{M.~{Gerencs\'er}} and \textsc{M.~{Hairer}}.
\newblock {PAM on a cube}.
\newblock \emph{in preparation} (2019).

\bibitem[GIP15]{Max}
\textsc{M.~Gubinelli}, \textsc{P.~Imkeller}, and \textsc{N.~Perkowski}.
\newblock Paracontrolled distributions and singular {PDE}s.
\newblock \emph{Forum Math. Pi} \textbf{3}, (2015), e6, 75.
\newblock \burlalt{arXiv:1210.2684}{http://arxiv.org/abs/1210.2684}.
\newblock
  \burlalt{doi:10.1017/fmp.2015.2}{http://dx.doi.org/10.1017/fmp.2015.2}.

\bibitem[GUZ18]{Max2}
\textsc{M.~{Gubinelli}}, \textsc{B.~{Ugurcan}}, and \textsc{I.~{Zachhuber}}.
\newblock {Semilinear evolution equations for the Anderson Hamiltonian in two
  and three dimensions}.
\newblock \emph{ArXiv e-prints} (2018).
\newblock \burlalt{arXiv:1807.06825}{http://arxiv.org/abs/1807.06825}.

\bibitem[Hai14]{Hairer2014}
\textsc{M.~Hairer}.
\newblock A theory of regularity structures.
\newblock \emph{Invent. Math.} \textbf{198}, no.~2, (2014), 269--504.
\newblock \burlalt{arXiv:1303.5113}{http://arxiv.org/abs/1303.5113}.
\newblock
  \burlalt{doi:10.1007/s00222-014-0505-4}{http://dx.doi.org/10.1007/s00222-014-0505-4}.

\bibitem[HL17]{Recons}
\textsc{M.~Hairer} and \textsc{C.~Labb\'e}.
\newblock The reconstruction theorem in {B}esov spaces.
\newblock \emph{J. Funct. Anal.} \textbf{273}, no.~8, (2017), 2578--2618.
\newblock
  \burlalt{doi:10.1016/j.jfa.2017.07.002}{http://dx.doi.org/10.1016/j.jfa.2017.07.002}.

\bibitem[HL18]{mSHE}
\textsc{M.~Hairer} and \textsc{C.~Labb\'e}.
\newblock Multiplicative stochastic heat equations on the whole space.
\newblock \emph{J. Eur. Math. Soc. (JEMS)} \textbf{20}, no.~4, (2018),
  1005--1054.
\newblock \burlalt{doi:10.4171/JEMS/781}{http://dx.doi.org/10.4171/JEMS/781}.

\bibitem[HP15]{HaiPar}
\textsc{M.~Hairer} and \textsc{E.~Pardoux}.
\newblock A {W}ong-{Z}akai theorem for stochastic {PDE}s.
\newblock \emph{J. Math. Soc. Japan} \textbf{67}, no.~4, (2015), 1551--1604.
\newblock
  \burlalt{doi:10.2969/jmsj/06741551}{http://dx.doi.org/10.2969/jmsj/06741551}.

\bibitem[HQ15]{HairerQuastel}
\textsc{M.~{Hairer}} and \textsc{J.~{Quastel}}.
\newblock {A class of growth models rescaling to KPZ}.
\newblock \emph{ArXiv e-prints} (2015).
\newblock \burlalt{arXiv:1512.07845}{http://arxiv.org/abs/1512.07845}.

\bibitem[Nua06]{Nualart}
\textsc{D.~Nualart}.
\newblock \emph{The {M}alliavin calculus and related topics}.
\newblock Probability and its Applications (New York). Springer-Verlag, Berlin,
  second ed., 2006.

\bibitem[OW16]{OW}
\textsc{F.~{Otto}} and \textsc{H.~{Weber}}.
\newblock {Quasilinear SPDEs via rough paths}.
\newblock \emph{ArXiv e-prints} (2016).
\newblock \burlalt{arXiv:1605.09744}{http://arxiv.org/abs/1605.09744}.

\bibitem[RS80]{ReedSimon}
\textsc{M.~Reed} and \textsc{B.~Simon}.
\newblock \emph{Methods of modern mathematical physics. {I}}.
\newblock Academic Press, Inc. [Harcourt Brace Jovanovich, Publishers], New
  York, second ed., 1980.
\newblock Functional analysis.

\end{thebibliography}

\end{document}